\documentclass[12pt]{article}

\usepackage{color}
\usepackage{amsthm}
\usepackage{amsmath}
\usepackage{amssymb}
\usepackage[margin=.75in]{geometry}
\usepackage{enumerate}
\usepackage{hyperref}
\hypersetup{
  colorlinks   = true, 
  urlcolor     = blue, 
  linkcolor    = blue, 
  citecolor   = blue 
}

\usepackage{mathrsfs}

\title{Absolute continuity of the stable foliation of a Banach space mapping}

\theoremstyle{theorem}
\newtheorem{thm}{Theorem}[section]
\newtheorem*{thm*}{Theorem 1}
\newtheorem{cor}[thm]{Corollary}
\newtheorem*{cor*}{Corollary 1}
\newtheorem{lem}[thm]{Lemma}
\newtheorem{prop}[thm]{Proposition}

\theoremstyle{definition}

\newtheorem{defn}[thm]{Definition}
\newtheorem{rmk}[thm]{Remark}

\newtheorem{cla}[thm]{Claim}

\newcommand{\N}{\mathbb{N}}

\newcommand{\R}{\mathbb{R}}

\newcommand{\Z}{\mathbb{Z}}

\newcommand{\diam}{\operatorname{diam}}

\renewcommand{\a}{\alpha}

\renewcommand{\d}{\delta}
\newcommand{\e}{\epsilon}
\renewcommand{\l}{\lambda}
\newcommand{\graph}{\operatorname{graph}}
\newcommand{\lip}{\operatorname{Lip}}
\newcommand{\Id}{\operatorname{Id}}

\newcommand{\Bc}{\mathcal{B}}

\renewcommand{\tilde}{\widetilde}

\newcommand{\tf}{\tilde{f}}

\newcommand{\Sc}{\mathcal{S}}

\newcommand{\Oc}{\mathcal{O}}
\newcommand{\Wc}{\mathcal{W}}

\newcommand{\As}{\mathscr{A}}

\newcommand{\la}{\lambda}

\newcommand{\Lip}{\operatorname{Lip}}

\newcommand{\Dom}{\operatorname{Dom}}

\newcommand{\Gap}{\operatorname{Gap}}

\newcommand{\Nk}{\mathfrak N}

\title{Absolute continuity of stable foliations for mappings of Banach spaces}
\author{Alex Blumenthal\thanks{Courant Institute of Math. Sciences, New York University, New York, USA.  Email: alex@cims.nyu.edu.}
\and Lai-Sang Young\thanks{Courant Institute of Math. Sciences, New York University, New York, USA.  Email: lsy@cims.nyu.edu. This research was supported in part by NSF Grant DMS-1363161.}
}

\begin{document}

\maketitle

\abstract{
We prove the absolute continuity of stable foliations for mappings of Banach
spaces satisfying conditions consistent with time-$t$ maps of 
certain classes of dissipative PDEs. 
This property is crucial for passing information from submanifolds transversal to the
stable foliation to the rest of the phase space; it is also used in proofs of ergodicity. 
Absolute continuity of stable foliations is well known in finite dimensional 
hyperbolic theory. On Banach spaces, the
absence of nice geometric properties poses some additional difficulties.}

\section{Introduction and Setting}

In finite dimensional dynamical systems theory, positive Lebesgue
or Riemannian measure sets have often been equated with observable events,
and the absolute continuity of stable foliations has been 
 a very useful tool for connecting positive measure sets on unstable 
manifolds to positive measure sets in the phase space. Here we have
assumed that the phase space supports a meaningful notion of volume, e.g.,
it is a Riemannian manifold, and the measures in question are associated with
volumes or induced volumes on unstable manifolds. The connection above has made it possible 
for dissipative systems with chaotic attractors to have
a natural notion of {\it physically relevant} invariant measures. 
Indeed one of the most important advances in finite dimensional hyperbolic theory 
in the last half century is the idea of SRB measures, which govern the distributions of 
positive Lebesgue measure sets of initial conditions thanks to the absolute
continuity of stable foliations 
(see e.g. \cite{eckmannRuelle,pughShub,youngDiffSurv}). An equally important use
of this property is in proofs of ergodicity, 
via the well known argument of Hopf \cite{hopf}. This argument has been 
used many times: { we mention applications to geodesic flows on manifolds 
of negative curvature (see, e.g., \cite{anosovAC}) and to dispersing billiards (e.g., \cite{sinaiAC, liveraniAC}); see also \cite{pesin, pughShub}.}

In infinite dimensional dynamical systems, such as those on Banach spaces,
there is no natural notion of volume, hence no obvious concept of 
``observable events"; yet the idea of what constitutes a ``typical solution"
for a PDE seems no less important. It is in the context of attempting to 
offer an answer to these questions
that the idea of absolute continuity of stable foliations appears. Using 
Haar measure to define a notion of ``positivity of measure" on finite 
dimensional subspaces or submanifolds of Banach spaces, it has been shown 
that for a Banach space system with a center manifold, there 
is a strong stable foliation that is absolutely continuous \cite{LYZinertialAC}. 
{ Via this strong stable foliation, properties that are determined by asymptotic
future orbit distributions are passed from the center manifold to the rest of the
phase space, and the absolute continuity of this foliation enables us to
define a notion of ``typical initial condition", a notion of ``almost everywhere" in Banach spaces, that is dynamically connected to volumes on center manifolds.}

In this paper, we extend the idea of absolute continuity of stable foliations 
to dissipative dynamical systems with quasi-compact derivative operators on 
Banach spaces without assuming the existence of center manifolds.
We state and carry out in detail a complete proof of this result for strong stable manifolds
of nonuniformly hyperbolic dynamical systems. As a corollary, we show that
{ the basins of SRB measures with nonzero Lyapunov exponents 
are ``visible", in the sense that for many families of initial conditions smoothly 
parametrized by $[0,1]^k$, orbits starting from a positive Lebesgue measure subset
are described by SRB measures.}

There are several proofs of absolute continuity in finite dimensions, a 
testimony to the centrality of this result in the subject. Our proof follows in outline 
the one sketched in \cite{youngDiffSurv}, and is different than \cite{pughShub, pesin, katokStrelcyn}. 
We mention that \cite{katokStrelcyn}, as well as the very recent paper \cite{LLLhilbert}, 
both prove a similar result for mappings of Hilbert spaces. 
An important difference between Hilbert and Banach spaces is that 
the latter need not have good geometry. Any proof of absolute continuity 
hinges on (i) the action of holonomy maps 
{(defined by sliding along stable manifolds)} on 
balls or objects with nice geometric shapes, and 
(ii) covering lemmas on transversals by objects of the same kind. 
In this paper, we have had to devise ways to overcome the difficulty that
Banach space balls are not necessarily nice. We believe our proof is to-the-point and 
{ concise}, perhaps one 
of the most direct even among finite dimensional proofs. We have also 
included a complete proof of the formula for the Radon-Nikodym derivatives 
of holonomy maps, a fact often claimed without proof in papers in finite 
dimensions.


\bigskip
{ The setting of this paper is as follows: } 
Let $\Bc$ be a Banach space with norm $|\cdot|$. We consider $(f, \mu)$, where $f : \Bc \to \Bc$ is a map and $\mu$ is an $f$-invariant Borel probability measure. We assume: 
\begin{itemize} 
\item[(H1)] (i) $f$ is injective and $C^{2}$ Fr\'echet differentiable; 

\vspace{-3 pt}
(ii) the derivative of $f$ at $x \in \Bc$, denoted $df_x$, is also injective.

\vspace{-3 pt}
\item[(H2)] (i) $f$ leaves invariant a compact set $\As \subset \Bc$, with $f(\As)=\As$;

\vspace{-4 pt}
(ii) $\mu$ is supported on $\As$. 

\vspace{-3 pt}
\item[(H3)] We assume 
\[l_\alpha(x):=\lim_{n\to\infty}\frac1n \log |df^n_x|_\alpha \ < \ 0 
\quad \mbox{ for } \ \mu-\text{a.e. } x\ . \]
Here $|df^n_x|_\alpha$ is the Kuratowski measure of noncompactness of 
the set $df^n_x(B)$, where
$B$ is the unit ball in $\Bc$ (see, e.g., \cite{nussbaum} for properties of $|\cdot|_\a$).
\end{itemize}

{
Conditions (H1), (H2)(i) and (H3) are known to hold for systems defined by large 
classes of dissipative PDEs (see \cite{henry}); the compact set $\As$ is often an attractor.
The existence of invariant measures on $\As$ is not an additional assumption; such
measures always exist.}

\medskip
To motivate the material in Sections 2 and 3, we first state a rough version 
of one of our main results, containing yet-to-be-defined terms.

\bigskip \noindent
{\bf Provisional Theorem} \ {\it Let $W^{ss}$ be a strong stable foliation of $f$. We assume $W^{ss}$
has codimension $k \in \mathbb Z^+$, and let $\Sigma^1, \Sigma^2$ be two 
embedded $k$-dimensional
disks in $\Bc$, close to one another and roughly parallel, both transversal to $W^{ss}$. 
We assume that the holonomy map 
$$p: \check \Sigma^1 \to \Sigma^2$$ 
from $\Sigma^1$ to $\Sigma^2$ along $W^{ss}$ is defined on 
$\check \Sigma^1 \subset \Sigma^1$, i.e. for $x \in \check \Sigma^1$, $p(x)$ is the unique point in 
$W^{ss}_{{\rm loc}, x} \cap \Sigma^2$, where $W^{ss}_{{\rm loc}, x}$ is a local $W^{ss}$-manifold at $x$. Then $p$ is absolutely continuous, in the sense that if $B \subset
\Sigma^2$ is a Borel set such that $\nu_{\Sigma^2}(B)=0$, then 
$\nu_{\Sigma^1}(p^{-1}B)=0$. Here $\nu_\Sigma$ is the induced volume on
an embedded disk $\Sigma$.} 

\bigskip

Induced volumes and other preliminaries are given in Section 2. More technical
preparation, including the strong stable foliation and transversals, are discussed 
in Section 3. The {Provisional Theorem} above is formulated precisely as {\bf Theorem A}
and proved in Section 4. {\bf Theorem B}, which gives precise Radon-Nikodym 
derivatives of holonomy maps, is stated and proved in Section 5. Section 6 contains
some consequences of these results for SRB measures with no zero Lyapunov exponents,
including {\bf Theorem C}, on ergodic decomposition, and {\bf Theorem D},
on the ``visibility" of SRB measures.

\section{Preliminaries}

\subsection{Banach space geometry}\label{subsec:bSpaceGeometryAC}

First we explain what is meant by induced volume in the statement of the {\bf Provisional Theorem}. 

\begin{defn}
Let $E \subset \Bc$ be a finite-dimensional subspace. We define the \emph{induced volume }$m_E$ on $E$ to be the unique Haar measure on $E$ for which
$$
m_E \{u \in E \mid |u| \leq 1\} = \omega_k
$$
where $k = \dim E$ and $\omega_k$ is the Lebesgue measure of the Euclidean unit ball in $\R^k$.
\end{defn}

Once volumes are defined, the notion of {\it determinant} follows naturally:
Let $A: \Bc \to \Bc$ be a bounded operator, and let $E \subset \Bc$ be a subspace
of finite dimension. Let $B_E$ denote the closed unit ball in $E$. Then
\begin{align*}
\det(A |E ) = \begin{cases} \frac{m_{AE}(A B_E) }{m_E(B_E)} & A|_E \text{ injects} \\ 0 & \text{else.} \end{cases} \, .
\end{align*}

The notion of induced volume above is defined for one subspace at a time.
For it to be useful, it is necessary to ensure some regularity as subspaces are varied.
The Hausdorff distance between two closed subspaces $E, E' \subset \Bc$ is 
defined to be
$$
d_H(E, E') = \max \{ \sup\{d(e, S_{E'}) : e \in S_E \}, \sup\{d(e', S_E) : e' \in S_{E'}\}\}
$$
where $S_E =\{v \in E \,|\, |v|=1\}$.

\begin{prop}[\cite{BYentropy}, Proposition 2.15] \label{prop:detReg} 
For any $k \geq 1$ and any $M > 1$ there exist $L, \e > 0$ with the following properties. If $A_1, A_2 : \Bc \to \Bc$ are bounded linear operators and $E_1, E_2 \subset \Bc$ are $k$-dimensional subspaces for which
\begin{gather*}
|A_j|,~ |(A_j|_{E_j})^{-1}| \leq M \quad j=1,  2 \, ,\\
|A_1 - A_2|, ~d_H(E_1, E_2) \leq \e \, ,
\end{gather*}
then we have the estimate
\begin{equation} \label{regularity}
\left| \log \frac{\det(A_1 | E_1)}{\det (A_2 | E_2)} \right| \leq L (|A_1 - A_2| + d_H(E_1, E_2))\ .
\end{equation}
\end{prop}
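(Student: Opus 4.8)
The plan is to reduce \eqref{regularity} to a finite-dimensional estimate on how the volume of a parallelepiped depends on its edge vectors. Fix notation: for a $k$-dimensional subspace $F \subset \Bc$ and vectors $w_1, \dots, w_k \in F$, write $\operatorname{vol}_F(w_1, \dots, w_k) = m_F\big(\{\sum_i t_i w_i : 0 \le t_i \le 1\}\big)$. Since $A|_E : E \to AE$ is a linear isomorphism and both $m_E$ and $m_{AE}$ are Haar measures, one has, for any basis $v_1, \dots, v_k$ of $E$, the basis-independent formula $\det(A|E) = \operatorname{vol}_{AE}(A v_1, \dots, A v_k)/\operatorname{vol}_E(v_1, \dots, v_k)$. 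The core of the argument is the following \emph{comparison lemma}: given $M_0, c_0 > 0$ and $k$, there are $\delta_0, L_0 > 0$ so that if $w_1, \dots, w_k$ and $w_1', \dots, w_k'$ are tuples in $\Bc$, each spanning a $k$-dimensional subspace, with $|w_i|, |w_i'| \le M_0$, with $\operatorname{vol}(w_\bullet), \operatorname{vol}(w_\bullet') \ge c_0$, and with $\max_i |w_i - w_i'| \le \delta \le \delta_0$, then $|\log \operatorname{vol}(w_\bullet) - \log \operatorname{vol}(w_\bullet')| \le L_0 \delta$. To prove it, let $F = \operatorname{span}(w_\bullet)$, $F' = \operatorname{span}(w_\bullet')$ and let $\Phi : F \to F'$ be the linear map with $\Phi w_i = w_i'$. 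Because a non-degenerate parallelepiped with bounded edges has dual-basis functionals of bounded norm (quantitatively in $k, M_0, c_0$), writing $x = \sum_i c_i w_i$ gives $\sum_i |c_i| \le C(k, M_0, c_0)\, |x|$, hence $|\Phi x - x| = |\sum_i c_i (w_i' - w_i)| \le C(k,M_0,c_0)\, \delta\, |x|$; for $\delta \le \delta_0$ small this makes $\Phi$ invertible with $\Phi^{-1}$ obeying the analogous bound. Consequently $\Phi B_F$ is squeezed between $(1+\tau)^{-1} B_{F'}$ and $(1 + \tau) B_{F'}$ with $\tau = O_{k, M_0, c_0}(\delta)$, and since $m_F(B_F) = m_{F'}(B_{F'}) = \omega_k$ — the normalization is the \emph{same} in every $k$-dimensional subspace — the factor $m_{F'}(\Phi B_F)/m_F(B_F)$ by which $\Phi$ rescales Haar measure lies in $[(1+\tau)^{-k}, (1+\tau)^k]$. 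Since $\operatorname{vol}(w_\bullet')/\operatorname{vol}(w_\bullet)$ equals exactly this rescaling factor, the comparison lemma follows.

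Granting the comparison lemma, fix $k$ and $M$ and argue as follows. By Auerbach's lemma, pick a basis $v_1, \dots, v_k$ of $E_1$ with $|v_i| = 1$ and dual functionals of norm $1$; then $c_k \le \operatorname{vol}(v_1, \dots, v_k) \le k^k \omega_k$ for a dimensional constant $c_k$. If $\e$ is small enough (depending on $k, M$), the hypothesis $d_H(E_1, E_2) \le \e$ together with the definition of $d_H$ furnishes $v_i' \in E_2$ with $|v_i - v_i'| \le d_H(E_1, E_2)$; for $\e$ small, $v_1', \dots, v_k'$ is automatically a basis of $E_2$ with $|v_i'| \le 2$ and $\operatorname{vol}(v_1', \dots, v_k') \ge c_k / 2$. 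From $|A_j| \le M$ and $|(A_j|_{E_j})^{-1}| \le M$ one gets the two-sided bound $M^{-k} \le \det(A_j|E_j) \le M^k$, whence $\operatorname{vol}(A_1 v_\bullet)$ and $\operatorname{vol}(A_2 v_\bullet')$ are bounded above and below by constants depending only on $k$ and $M$; moreover $|A_1 v_i|, |A_2 v_i'| \le 2M$ and
\[
  |A_1 v_i - A_2 v_i'| \le |A_1 v_i - A_2 v_i| + |A_2(v_i - v_i')| \le |A_1 - A_2| + M\, d_H(E_1, E_2).
\]
Apply the comparison lemma once to the tuples $(A_1 v_\bullet)$ and $(A_2 v_\bullet')$ and once to $(v_\bullet)$ and $(v_\bullet')$, then subtract the two inequalities and use $\det(A_1|E_1) = \operatorname{vol}(A_1 v_\bullet)/\operatorname{vol}(v_\bullet)$ and $\det(A_2|E_2) = \operatorname{vol}(A_2 v_\bullet')/\operatorname{vol}(v_\bullet')$. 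This yields \eqref{regularity} with $L = L(k, M)$.

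I expect the main obstacle to be conceptual rather than computational: the two numerator volumes $\operatorname{vol}_{A_1 E_1}(A_1 v_\bullet)$ and $\operatorname{vol}_{A_2 E_2}(A_2 v_\bullet')$ live in \emph{different} subspaces of $\Bc$, so ``the volume depends continuously on the edges'' is not literally a statement about a single multilinear form and must be routed through the comparison lemma. The lemma itself is where the absence of an inner product is felt: one cannot invoke singular values, so the Haar–Jacobian of a near-inclusion isomorphism is controlled instead through the containment of convex bodies, using crucially that the (Busemann-type) normalization $m_F(B_F) = \omega_k$ is uniform across subspaces. The remaining ingredients — Auerbach bases, openness of non-degeneracy, the two-sided estimate on $\det(A_j|E_j)$, and the bound on dual functionals of a well-conditioned parallelepiped — are routine and affect only the sizes of $L$ and $\e$.
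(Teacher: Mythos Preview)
The paper does not prove this proposition; it is quoted from \cite{BYentropy} and invoked as a black box (with Remark \ref{rmk:constantsDependence} recording the explicit dependence $L \sim C_k M^{10k}$, $\e \sim (C_k M^{10k})^{-1}$). So there is no in-paper proof to compare against.

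Your argument is correct. The reduction to a ``comparison lemma'' for parallelepiped volumes across different subspaces is exactly the right move, and your proof of that lemma --- bounding the near-identity map $\Phi$ via control on the dual-basis functionals, then sandwiching $\Phi B_F$ between dilates of $B_{F'}$ and exploiting the uniform normalization $m_F(B_F)=\omega_k$ --- is sound. One step you glossed is worth making explicit: the bound $\sum_i |c_i| \le C(k,M_0,c_0)\,|x|$ for $x=\sum c_i w_i$ does \emph{not} follow from a height-times-base formula (which is Euclidean), but it does follow by writing $w_i$ in an Auerbach basis $(e_j)$ of $F$, noting that the change-of-basis matrix $A$ has entries bounded by $M_0$ and $|\det A| = \operatorname{vol}(w_\bullet)/\operatorname{vol}(e_\bullet) \ge c_0/(k^k\omega_k)$, and then invoking Cramer's rule to bound $A^{-1}$. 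With that detail filled in, everything else is routine. Your final paragraph correctly identifies the conceptual crux: the two numerator volumes live in different subspaces, so the continuity must be mediated through the common normalization rather than through a single multilinear form.
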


\begin{rmk}[see \cite{BYentropy}] \label{rmk:constantsDependence}
Later, when we apply Proposition \ref{prop:detReg} to distortion estimates, we will need to use the dependence of the constants $\e, L$ on the parameters $k, M$. One can show that there exists a constant $C_k \geq 1$, depending only on the dimension $k \in \N$, such that we may take $\e = (C_k M^{10 k })^{-1}$ and $L = C_k M^{10 k }$ in the conclusion to Proposition \ref{prop:detReg}.
\end{rmk}

Treating induced volumes on finite dimensional linear subspaces as volume elements, one obtains by the usual construction
a notion of induced volume $\nu_W$ on a finite dimensional submanifold $W$ (see, e.g., Sect. 5.3 in \cite{BYentropy}).
This is the measure on transversals used in the statement of the {\bf Provisional Theorem} in Section 1.

\bigskip

For computations, it is often convenient to work with the \emph{gap} $\Gap(E, E')$, defined by
\[
\Gap(E, E') = \sup_{v \in S_E} d(v, E') \, .
\]
The quantities $\Gap$ and $d_H$ are related as follows:

\begin{lem}[\cite{kato}]\label{lem:symmetryCloseness}
For all closed subspaces $E, E'$, we have
$$ d_H(E, E') \leq \max\{\Gap(E, E'), \Gap(E', E) \} \leq 2 d_H(E, E')\ .  
$$
If additionally $E, E' \subset \Bc$ are closed subspaces with the same finite codimension $q$, then
\[
\Gap(E', E) \leq \frac{q \Gap(E, E')}{1 - q \Gap(E, E')} \, ,
\]
so long as the denominator in the above expression is $> 0$.
\end{lem}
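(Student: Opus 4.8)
The statement to prove is Lemma~\ref{lem:symmetryCloseness}, which has two parts: the relation between $d_H$ and the two one-sided gaps, and the quantitative reverse bound $\Gap(E',E) \leq \frac{q\,\Gap(E,E')}{1 - q\,\Gap(E,E')}$ for subspaces of the same finite codimension $q$. The first part is essentially a matter of unwinding definitions. Since $d_H(E,E') = \max\{\sup_{e\in S_E} d(e,S_{E'}),\ \sup_{e'\in S_{E'}} d(e',S_E)\}$ and $\Gap(E,E') = \sup_{v\in S_E} d(v,E')$, I would first observe that $d(v,E') \leq d(v,S_{E'})$ trivially for any unit vector $v$, since $S_{E'}\subset E'$; this gives $\Gap(E,E') \leq \sup_{e\in S_E} d(e,S_{E'}) \leq d_H(E,E')$, and symmetrically $\Gap(E',E)\le d_H(E,E')$, so $\max\{\Gap(E,E'),\Gap(E',E)\} \le 2 d_H(E,E')$ is immediate (in fact one gets the sharper $\max \leq d_H$, but $2d_H$ suffices). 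For the lower bound $d_H(E,E') \leq \max\{\Gap(E,E'),\Gap(E',E)\}$, I need to control $d(e, S_{E'})$ by $d(e, E')$: given $e\in S_E$, pick $w\in E'$ with $|e - w|$ close to $d(e,E') \le \Gap(E,E')$; then $w \neq 0$ (assuming the gap $< 1$) and $\big||w| - 1\big| = \big||w| - |e|\big| \le |e-w|$, so the normalized vector $w/|w| \in S_{E'}$ satisfies $|e - w/|w|| \le |e - w| + |w - w/|w|| = |e-w| + \big||w|-1\big| \le 2|e-w|$. Taking the supremum over $e$ and using symmetry yields $d_H(E,E') \le 2\max\{\Gap(E,E'),\Gap(E',E)\}$; a small additional argument sharpening the constant to $1$ may be packaged in by being slightly more careful, but this is a routine triangle-inequality estimate and not the heart of the matter.

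The substantive part is the second inequality, where the equal-finite-codimension hypothesis is essential (without it the two one-sided gaps are genuinely incomparable). My plan here is to use the standard duality/projection argument. Let $\gamma = \Gap(E,E')$ and assume $q\gamma < 1$. I want to show every unit vector $v' \in E'$ is within $\frac{q\gamma}{1-q\gamma}$ of $E$. Choose a complement: since $E$ has codimension $q$, write $\Bc = E \oplus F$ with $F$ a $q$-dimensional subspace, and let $\pi: \Bc \to F$ be the associated projection with kernel $E$; then for any $u$, $d(u,E) \le |\pi u|$ (with an appropriate choice of $F$ one can make the projection well-controlled, but one must be careful about the norm of $\pi$). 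The cleaner route, which I would actually take, is to pick $q$ linear functionals $\ell_1,\dots,\ell_q \in \Bc^*$ whose common kernel is $E$ — so $E = \bigcap_i \ker \ell_i$ — normalized suitably. For $v\in S_E$ we have $\ell_i(v) = 0$, so for any $w \in E'$, $|\ell_i(w)| = |\ell_i(w - v)| \le |\ell_i|\,|w-v|$; minimizing over $v\in E$ gives $|\ell_i(w)| \le |\ell_i|\, d(w,E)$... but I need the reverse direction, bounding $d(w,E)$ by the $\ell_i(w)$. Here is where the codimension count enters: since $E'$ also has codimension $q$, and $E = \bigcap \ker\ell_i$, the restrictions $\ell_i|_{E'}$ have common kernel $E\cap E'$ of codimension (at most) $q$ in $E'$, hence the map $w \mapsto (\ell_1(w),\dots,\ell_q(w))$ identifies $E'/(E\cap E')$ with its image, and one estimates $d(w,E) = d(w, E\cap (E'+\mathbb{R}w)\text{-ish})$ in terms of $\sum_i |\ell_i(w)|$ with the factor $q$ coming from summing $q$ coordinates. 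Combined with the gap bound $|\ell_i(w)| \le |\ell_i| \gamma |w|$ for $w \in E'$ (using that unit vectors of $E'$ are within $\gamma$ of $E = \bigcap\ker\ell_i$ — wait, that's $\Gap(E',E)$, the quantity I'm bounding), I should instead run this with $\ell_i$ chosen to cut out $E'$ and measure vectors of $E$: let $E' = \bigcap \ker m_i$ with $|m_i| = 1$; for $v \in S_E$, $|m_i(v)| = d(v, \ker m_i) \le \dots$, and summing, $d(v, E') \le \sum_i |m_i(v)|$; then bound each $|m_i(v)|$ using that $v$ is within $\Gap(E,E')$ of $E'$. Iterating this self-referentially — each application improves the constant — produces the geometric-series bound $\frac{q\gamma}{1-q\gamma}$.

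The main obstacle, and the step I would spend the most care on, is making the codimension-$q$ reduction rigorous with the sharp constant $q$ (as opposed to some cruder constant like $2^q$ or $q!$): one must exhibit functionals $m_1,\dots,m_q$ of norm $\le 1$ cutting out $E'$ such that $d(v,E') \le \sum_{i=1}^q |m_i(v)|$ for all $v$, which is a Hahn–Banach / biorthogonal-system construction and is where Banach-space subtlety (no inner product, so no orthonormal basis of the $q$-dimensional quotient) bites; and then one must set up the fixed-point/iteration estimate $\Gap(E',E) \le \sum_i |m_i(v)| \le q \cdot \Gap(E,E') + (\text{correction involving }\Gap(E',E))$ carefully so that solving the resulting inequality for $\Gap(E',E)$ gives exactly $\frac{q\Gap(E,E')}{1-q\Gap(E,E')}$. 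Since this lemma is attributed to Kato, I would expect the cleanest presentation to mirror the classical Hilbert-space proof (where the constant is $\gamma/\sqrt{1-\gamma^2}$ or similar via orthogonal projections) adapted to the general Banach setting, accepting the slightly lossier constant $q$ that the $\ell^1$-type bound over $q$ coordinates forces.
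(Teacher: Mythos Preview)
The paper does not prove this lemma at all; it is simply quoted from Kato's \emph{Perturbation Theory for Linear Operators} (the citation \texttt{[kato]}), so there is no argument in the paper to compare your proposal against.

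On the merits of your attempt: your treatment of the first chain of inequalities is essentially correct, and in fact what you actually derive is $\max\{\Gap(E,E'),\Gap(E',E)\}\le d_H(E,E')\le 2\max\{\Gap(E,E'),\Gap(E',E)\}$, which is the standard form of this relation; the paper's displayed inequality has the two quantities in the other order, but in either reading the content is that $d_H$ and $\max\Gap$ are equivalent up to a factor of~$2$, and your normalization argument $|e-w/|w||\le 2|e-w|$ is exactly the right mechanism.

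For the second inequality your proposal has the right raw material---passing to $q$ functionals cutting out the codimension-$q$ subspace, i.e.\ implicitly working in the $q$-dimensional annihilator---but the argument does not close. You notice the circularity yourself (``wait, that's $\Gap(E',E)$, the quantity I'm bounding''), and the promised ``iteration producing the geometric-series bound $\frac{q\gamma}{1-q\gamma}$'' is never actually set up: you do not exhibit a concrete inequality of the shape $\Gap(E',E)\le q\gamma\cdot(1+\Gap(E',E))$ or similar that could then be solved. Two specific gaps: first, the estimate $d(v,E')\le\sum_{i=1}^q|m_i(v)|$ you want to use is not automatic for an arbitrary choice of norm-one functionals $m_i$ cutting out $E'$---in a general Banach space the distance to an intersection of hyperplanes is not controlled by the sum of the distances to the hyperplanes without a careful (Auerbach-type) choice of the $m_i$; second, even granting that, you never explain what inequality the ``self-referential iteration'' is iterating. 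The clean route, which is Kato's, is to pass to the $q$-dimensional annihilators $E^\perp,(E')^\perp\subset\Bc^*$ (using that the gap in $\Bc$ between $E,E'$ equals the gap in $\Bc^*$ between their annihilators, with the order reversed) and then prove the finite-\emph{dimensional} statement directly: for $q$-dimensional $M,N$ with $\delta=\Gap(M,N)$ and $q\delta<1$, perturb a norm-one basis of $M$ to vectors in $N$, check linear independence, and estimate coefficients. Your sketch gestures at pieces of this but does not assemble them into a proof.
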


For a more complete discussion of results on Banach space geometry, 
induced volumes and determinants etc. that are relevant for the extension
of finite dimensional ergodic theory to Banach space maps, see \cite{BYentropy}, Section 2.

\subsection{Multiplicative Ergodic Theorem (MET)} 

To fix notation, we recall the following version of the MET, which is adequate for our purposes:  Let $X$ be a compact metric space, and let $f : X \to X$ be a homeomorphism
preserving a Borel probability measure $\mu$ on $X$. 
We consider a continuous map $T : X \to {\bf B}(\Bc)$ where
${\bf B}(\Bc)$ denotes the space of bounded linear operators on $\Bc$,
the topology on ${\bf B}(\Bc)$ being the   operator norm topology.
We assume additionally that $T_x:= T(x)$ is injective for every $x \in X$, and write 
$T_x^n = T_{f^{n-1} x} \circ \cdots \circ T_x$. Define
\[
l_\a(x) = \lim_{n \to \infty} \frac1n \log |T^n_x|_\a 
\]
for $\mu$-almost every $x \in X$ (as in (H3) in Section 1). 

\newcommand{\oGamma}{\overline{\Gamma}}

\begin{thm}[Multiplicative ergodic theorem \cite{thieu}]  \label{thm:MET} 
Under the hypotheses above, for any measurable function $\l_\a : X \to \R$ for which $\l_{\a} > l_{\a}$ $\mu$-almost surely, 
there is a measurable, $f$-invariant set $\oGamma \subset X$ 
with $\mu(\oGamma)=1$, a measurable function $r : \oGamma \to \Z_{\geq 0}$, and on the level
sets of $r$ a collection of measurable functions $\l_1, \cdots, \l_{r(x)} : X \to \R$ such that
\[\l_1(x) > \l_2(x) > \cdots > \l_{r(x)}(x) > \l_\a(x) \, , \] 
for which the following properties hold. 
For any $x \in \oGamma$, there is a splitting 
\[\Bc = E_1(x) \oplus E_2(x) \oplus \cdots \oplus E_{r(x)}(x) \oplus F(x)\]
such that
\begin{itemize}
\item[(a)] for each $i=1,2,\dots, r(x)$, $\dim E_i(x) = m_i(x)$ is finite, $T_x E_i(x) = E_i(f x)$,
 and for any $v \in E_i(x) \setminus \{0\}$, we have
$$
\l_i(x) = \lim_{n \to \infty} \frac{1}{n} \log |T^n_x v| = - \lim_{n \to \infty} \frac{1}{n} \log  |( T^n_{f^{-n} x})^{-1} v |\ ;
$$
\item[(b)] the distribution $F$ is closed and finite-codimensional, satisfies $T_x F(x) \subset F(f x)$ and
$$
\la_\a(x) \geq \limsup_{n \to \infty} \frac{1}{n} \log |T^n_x |_{F(x)}|  \ ;
$$
\item[(c)] the mappings $x \mapsto E_i(x), x \mapsto F(x)$ are $\mu$-continuous along the level sets of $r$ (see Definition \ref{defn:muCont} below), and
\item[(d)] writing $\pi_i(x)$ for the projection of $\Bc$ onto $E_i(x)$ via the splitting at $x$, 
we have
$$
\lim_{n \to \pm \infty} \frac{1}{n} \log |\pi_i(f^n x)| = 0 \quad a. s.
$$
\end{itemize}
\end{thm}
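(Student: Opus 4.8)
The plan is to prove the multiplicative ergodic theorem by the volume-growth method, which is the variant best adapted to Banach spaces because it never uses orthogonality. The first step produces the exponents. For $j \geq 1$ and $n \geq 1$ put $\Lambda^{(j)}_n(x) = \sup\{\log\det(T^n_x|E) : \dim E = j\}$; since $T^n_x$ is injective this sequence is (measurable and) subadditive over the cocycle, so Kingman's subadditive ergodic theorem gives $\Lambda^{(j)}(x) := \lim_n \tfrac1n \Lambda^{(j)}_n(x)$ a.e., $f$-invariant, and the same theorem applied to $\log|T^n_x|_\alpha$ produces $l_\alpha(x)$ as in (H3). The successive differences $\Lambda^{(j)}(x) - \Lambda^{(j-1)}(x)$ are nonincreasing in $j$ and are the Lyapunov exponents listed with multiplicity; since their limit as $j \to \infty$ is $\leq l_\alpha(x)$ --- this is where Nussbaum's characterization of $|\cdot|_\alpha$ and the hypothesis $l_\alpha < \lambda_\alpha$ enter --- only finitely many of them exceed $\lambda_\alpha(x)$. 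This yields the measurable, $f$-invariant data $r(x)$, $\lambda_1(x) > \cdots > \lambda_{r(x)}(x) > \lambda_\alpha(x)$, and the multiplicities $m_i(x)$.

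The next step is the splitting. In forward time one first gets a filtration: set $V_i(x) = \{v \in \Bc : \limsup_n \tfrac1n \log|T^n_x v| \leq \lambda_{i+1}(x)\}$ with the convention $\lambda_{r+1} := \lambda_\alpha$; these subspaces are closed, nested, and $T_x$-equivariant, and one shows $\dim V_{i-1}(x)/V_i(x) = m_i(x)$ while $F(x) := V_r(x)$ is finite-codimensional with growth rate $\leq \lambda_\alpha$ --- all read off from the volume functionals $\Lambda^{(j)}$ and the measure-of-noncompactness bound. To realize the $E_i$'s as genuine subspaces rather than quotients one runs the mirror construction along backward orbits: the base map $f$ is invertible, and although the cocycle $T$ need not be surjective, $T^n_x$ is injective, so $(T^n_{f^{-n}x})^{-1}$ is defined on the image $T^n_{f^{-n}x}\Bc$ and the volume-based construction applied to this family yields finite-dimensional backward-equivariant subspaces $W_i(x)$ with $\dim W_i(x) = m_1(x)+\cdots+m_i(x)$; one checks the transversality $V_{i-1}(x) \oplus W_{i-1}(x) = \Bc$ and sets $E_i(x) = V_{i-1}(x) \cap W_i(x)$. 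From here the two growth-rate formulas in (a) come from the defining limits (forward and backward growth on $E_i$ being reciprocal), the bound in (b) from the construction of $F$, property (c) --- $\mu$-continuity along level sets of $r$ --- because the finite-$n$ approximating subspaces depend continuously on $(x, T_x, \dots, T_{f^{n-1}x})$ and converge uniformly on sets of nearly full measure, and property (d), temperedness of the projections $\pi_i$, from (c) by the standard argument.

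The step I expect to be the main obstacle is the convergence, in the gap metric, of the finite-time approximating subspaces --- and, beneath it, every place where the finite-dimensional proof would peel off an orthogonal complement or invoke a singular value decomposition. In a general Banach space there is no inner product, the Grassmannian is not compact, and $\Gap$ is not even symmetric (Lemma \ref{lem:symmetryCloseness}), so the assertion that two near-optimal $j$-dimensional subspaces at consecutive times are close has to be proved by a determinant comparison --- precisely the role of Proposition \ref{prop:detReg}, where the explicit dependence of the constants on the dimension and on the operator bounds recorded in Remark \ref{rmk:constantsDependence} is essential for making the geometric convergence rate uniform. To apply that proposition one needs uniform control of $|T^n_x|$, of the relevant inverse-restriction norms, and of the dimension along long orbit segments; this is arranged in the usual way, by first passing to ``good'' subsets of large measure on which the measurable functions supplied by Kingman's theorem are nearly constant, carrying out the estimates there, and then using $f$-invariance to promote the conclusions to a full-measure set. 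A secondary but genuine difficulty is the non-invertibility of the cocycle, handled throughout by working with the inverses $(T^n_{f^{-n}x})^{-1}$ on images rather than with a global inverse.
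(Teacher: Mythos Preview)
The paper does not prove Theorem \ref{thm:MET}; it is quoted from Thieullen \cite{thieu} as background (``To fix notation, we recall the following version of the MET\ldots''), so there is no in-paper proof to compare your proposal against. Your sketch is nonetheless a faithful outline of the volume-growth approach that Thieullen and later authors use in the Banach setting: Kingman applied to the top-$j$ volume functionals $\Lambda^{(j)}_n$ to extract exponents and multiplicities, a forward filtration from growth rates, complementary finite-dimensional pieces built from the backward cocycle via the restricted inverses $(T^n_{f^{-n}x})^{-1}$, and temperedness of projections from slow variation. The obstacles you single out --- no orthogonality or SVD, asymmetry of $\Gap$, determinant regularity in lieu of singular values, and non-surjectivity of the cocycle --- are precisely what separates the Banach case from Hilbert or finite dimensions.

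Two cautions on the sketch itself. First, Proposition \ref{prop:detReg} (imported from \cite{BYentropy}) is designed for one-step determinant comparisons feeding the distortion estimates of Sections 3--5; the Oseledets convergence argument for the subspaces is usually run directly on the volume functionals $\Lambda^{(j)}_n$ via a Cauchy-type estimate on near-optimal $j$-flags, not by iterating a single-step comparison, so your last paragraph slightly conflates two different mechanisms. Second, the $\mu$-continuity in (c) is not obtained because ``finite-$n$ approximants depend continuously and converge uniformly on large sets''; rather one first establishes Borel measurability of the limit subspaces and then invokes Lusin to produce the compacta $\bar K_n$ --- uniform convergence on good sets is a consequence, not the hypothesis.
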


\begin{defn}\label{defn:muCont}
Let $X$ be a compact metric space and $\mu$ a Borel probability on $X$, and let $Z$ be a metric space. We say that a mapping $\Psi : X \to Z$ is $\mu$-continuous if there is an increasing sequence of compact subsets $\bar K_1 \subset \bar K_2 \subset \cdots \subset X$ with the properties that (i) $\Psi|_{\bar K_n}$ is a continuous mapping for all $n$ and (ii) $\mu(\cup_n \bar K_n) = 1$.
\end{defn}

\noindent For related facts on $\mu$-continuity, see \cite{BYentropy}, Section 3. 

\begin{rmk}\label{rmk:cutoffFunction}
The function $\la_\a$ appearing in Theorem \ref{thm:MET} should be thought of as 
mitigating a \emph{cutoff}, { prescribed in advance,} for the Lyapunov
spectrum of $(f, \mu; T)$. In the case where $(f, \mu)$ is ergodic, $l_\a $ is constant almost surely, and so for all purposes it
 suffices to apply the MET with $\la_\a$ equal to any constant strictly greater than $l_\a$. 
 When $(f, \mu)$ is not ergodic, { $l_\a$ is a measurable
 function taking values in $[-\infty, 0)$ (see (H3)), and it may be natural, even necesary,
for $\la_\a$ to be nonconstant. Given $l_\a$, an example of $\la_\a$ may be as follows:
For arbitrarily fixed constants $\gamma \in (0,1)$ and $\hat \la_\a \in (-\infty, 0)$, define
\begin{align}\label{eq:cutoffFunctionAC}
\lambda_\a(x)  = 
\begin{cases}
(1 - \gamma) l_\a(x)   & l_\a(x) > - \infty \\
\hat \la_\a & l_\a(x) = - \infty\ .
\end{cases} 
\end{align}
Observe that the function $\la_\a$ so defined has the property that $l_\a < \la_\a <0$\ ; it 
converges to
 $l_\a$ as $\gamma \to 0$ and $\hat \la_\a \to -\infty$, and importantly, 
it is an $f$-invariant function.}
\end{rmk}

{
\subsubsection*{Invariant sets defined by splitting of the Lyapunov spectrum} 


It is convenient to represent 
$\bar \Gamma$ as a countable union of 
positive $\mu$-measure invariant subsets on which certain quantities in the MET have
uniform bounds. Here is one way to systematically enumerate such a collection of 
invariant sets:

For $\la^* \in \R$, $m, p \in \Z_{> 0}$, define
\begin{align}\label{eq:gammaDecomp1AC}\begin{split}
\Gamma(\la^*; m, p)  =  \big\{x \in \oGamma : \, & \la_\a (x) < \la^* - \frac1p, \ 
\min_i |\la_i(x) - \la^*| > \frac1p,\ \dim E^+_x = m\big\}\ .
\end{split}\end{align}
When $\la_\a$ is $f$-invariant, each $\Gamma(\la^*; m, p)$ is invariant under $f$, and that the countable union
\begin{align*}
\bigcup_{\substack{m, p \in \mathbb Z_{> 0} \\ \la^* \in \mathbb Q}} \Gamma(\la^*; m, p) 
\end{align*}
is a full $\mu$-measure set. On $\Gamma(\la^*; m, p)$, we have the following splitting:
Let $\Bc_x$ denote the tangent space at $x \in \Gamma(\la^*; m, p)$. Then
$\Bc_x = E^+_x \oplus E^-_x$, where
$E^\pm_x$ are defined by
\begin{align}\label{defnEplus}
E^+_x = \bigoplus_{i : \la_i(x) > \la^*} E_i(x) \,  \quad \text{ and } \quad E^-_x = F(x) \oplus\bigg(\bigoplus_{i : \la_i(x) < \la^*} E_i(x) \bigg)\, .
\end{align}
Thus dim$(E^+) = m$ and $df_x(E^+_x) = E^+_{fx}$, while dim$(E^-) = \infty$ 
and $df_x(E^-_x) \subset E^-_{fx}$.

\bigskip \noindent
{\it From here on:} the setting in the Introduction is assumed. Let $l_a$ be the function in (H3).
We fix an $f$-invariant cutoff function $\la_\a$ with $l_\a < \la_\a < 0$, and apply the MET to the derivative cocycle $(f, \mu; df)$. All notation is as in the MET. Paring off sets of zero measure, we may assume that there exists an increasing sequence of Borel sets $K_1 \subset K_2 \subset \cdots \subset \As$ for which (i) $\oGamma = \cup_n K_n$ and (ii) the Oseledets subspaces $E_i, F$ are continuous on each $K_n$ (see Section 3.1 in \cite{BYentropy}).

As we will see, in most of our arguments it will suffice to consider one $\Gamma(\la^*; m, p)$
at a time. Specifically, from here to the end of Section 3, we fix $\la^*, m, p$, and write 
$$\Gamma = \Gamma(\la^*; m, p)\ .
$$
As we are interested only in splittings in which $df^n|E^-$ is strictly contracting, we may further assume $\la^* < 1/2p$.
}


\subsection{Adapted norms} \label{subsec:adaptedNormsAC}


We recall here without proof some standard results on adapted norms, modifying
results from Section 4 of \cite{BYentropy} as follows. 
Instead of decomposing the tangent space at $x \in \Gamma$
into $\Bc_x = E^u_x \oplus E^c_x \oplus E^s_x$ or $E^u_x \oplus E^{cs}_x$
as is done in \cite{BYentropy}, here we have $\Bc_x = E^+_x \oplus E^-_x$
where $E^+_x$ and $E^-_x$ are as defined in (\ref{defnEplus}) above;
we will sometimes refer to $E^-_x$ as the ``strong stable" direction.

Letting
\begin{align}\label{eq:laPlusMinusDef}
\lambda^+ = \lambda^* + \frac{1}{2 p} \qquad \mbox{and} \qquad  \lambda^- = \lambda^* - \frac{1}{2 p} \ ,
\end{align}
we have that $\lambda^- <0$, $\lambda^+ > \lambda^-$, and
$\lambda^+$ can be positive or negative.  Analogous to the construction in \cite{BYentropy}, we define the adapted norms $|\cdot|_x'$ as follows:
\begin{align*}
|u|'_x &= \sum_{n = - \infty}^{0} \frac{|df^{n}_x u |}{e^{n \la^+}} \text{ for } u \in E^{+}_x \, , \\
|w|'_x &= \sum_{n = 0}^{\infty} \frac{|df^n_xw |}{e^{n \la^-}} \text{ for } w \in E^-_x \, ,
\end{align*}
and for $v = u +  w \in \Bc_x, u \in E^{+}_x, w \in E^-_x$, we define $|v|_x' = \max \{|u|_x', |w|_x'\}$.

{ 
For $x \in \Gamma$ and $r > 0$, we will sometimes refer to the domain 
$\tilde B_x(r) = \{ v \in \Bc_x : |v|_x' \leq r\}$ equipped with the adapted norm $|\cdot|'_x$ 
as a ``chart", or a ``Lyapunov chart", a term borrowed from finite dimensional 
nonuniform hyperbolic theory. 
Accordingly, we define chart maps} $\tilde f_x : \tilde B_x(r) \to \Bc_{fx}$ by 
$\tilde f_x = \exp_{fx}^{-1} \circ f \circ \exp_x$. The proofs of the following results 
are identical to those in \cite{BYentropy}.

\begin{lem} \label{lem:charts1}\
\begin{itemize}
\vspace{-4pt}
\item[(a)] (One-step hyperbolicity) 
For any $u \in E^+_x, w \in E^-_x$, we have
\begin{gather*}
|df_x u|_{fx}'  \ge e^{{\lambda^+}} |u|'_x \\
|df_x w|_{f x}' \leq  e^{ \l^- } |w|'_x\ .
\end{gather*}
\end{itemize}
There exists $\d_1 > 0$ for which the following hold:
Given any $\d_2 > 0$, there
is a Borel measurable function $l:\Gamma \to \mathbb R^+$, with 
\begin{align}\label{eq:slowVaryAC}
l(f^\pm x) \le e^{\d_2}l(x)  \quad \text{ for } \mu-a.e. \, x \, ,
\end{align}
such that for all $x \in \Gamma$, 
\begin{itemize} \vspace{-4pt}
\item[(b)] the norms $|\cdot|'_x$ and $|\cdot|$ are related by
$$
\frac12 |v| \le |v|_x' \leq  l(x) |v|\ ;
$$
\item[(c)] for any $\d \le \d_1$, the following hold for 
$\tf_x$ restricted to $\tilde{B}_x(\d {l}(x)^{-1})$:
\begin{itemize}
\vspace{-3pt}
\item[(i)] $\lip'( \tf_x - (d \tf_x)_0) \leq \d$;
\item[(ii)] the mapping $z \mapsto (d\tilde f_x)_z$ satisfies 
$\lip' \big( d \tf_x \big) \leq {l}(x)$.
\end{itemize} \vspace{-4pt}
Here, $\Lip'$ refers to the Lipschitz constant taken with respect to the $|\cdot|'$ norm.
\end{itemize}
\end{lem}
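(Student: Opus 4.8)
The plan is to get assertion (a) and the lower bound in (b) straight from the series, and then to build the function $l$ so that it delivers the upper bound in (b), the slow-variation estimate \eqref{eq:slowVaryAC}, and both estimates in (c) at once. Throughout I would fix a neighborhood $N$ of $\As$ on which $C_1:=\sup_N|df|$ and $C_2:=\sup_N|d^2f|$ are finite (such an $N$ exists because $f$ is $C^2$ and $\As$ is compact), together with $\rho>0$ such that the $\rho$-neighborhood of $\As$ lies in $N$.

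For (a) I would reindex the defining series using the cocycle relation $df^n_{fx}\circ df_x=df^{n+1}_x$: for $u\in E^+_x$,
\begin{align*}
|df_x u|'_{fx}=\sum_{n=-\infty}^{0}\frac{|df^{n+1}_x u|}{e^{n\la^+}}=e^{\la^+}\Big(|u|'_x+e^{-\la^+}|df_x u|\Big)\ge e^{\la^+}|u|'_x\ ,
\end{align*}
and symmetrically $|df_x w|'_{fx}=e^{\la^-}\big(|w|'_x-|w|\big)\le e^{\la^-}|w|'_x$ for $w\in E^-_x$. The $n=0$ terms of the two series give $|u|'_x\ge|u|$ and $|w|'_x\ge|w|$, so for $v=u+w$ with $u\in E^+_x$, $w\in E^-_x$ one has $|v|\le|u|+|w|\le|u|'_x+|w|'_x\le 2|v|'_x$, which is the lower bound in (b).

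For the rest I would first build an auxiliary function $\ell$. On $\Gamma=\Gamma(\la^*;m,p)$ the gaps are quantitative: $\la_i>\la^*+\tfrac1p>\la^+$ whenever $\la_i>\la^*$, $\la_i<\la^*-\tfrac1p<\la^-$ whenever $\la_i<\la^*$, and $\limsup_n\tfrac1n\log|df^n_x|_{F(x)}|\le\la_\a(x)<\la^-$ by Theorem~\ref{thm:MET}(b). Feeding the Oseledets growth bounds of Theorem~\ref{thm:MET}(a)--(b), the projection bounds (d), and $\mu$-continuity (c) into the geometric series defining $|\cdot|'_x$ should produce a Borel function $\ell:\Gamma\to[1,\infty)$, \emph{tempered} along $\mu$-a.e. orbit, with $\tfrac12|v|\le|v|'_x\le\ell(x)|v|$ for all $v\in\Bc_x$. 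I would then set
\begin{align*}
l(x)=C_\star\sup_{j\in\Z}e^{-\d_2|j|}\,\ell(f^jx)
\end{align*}
for a constant $C_\star\ge 4C_2e^{\d_2}$: this is $\mu$-a.e. finite by temperedness, dominates $\ell$ (so the upper bound in (b) holds), satisfies $l(f^{\pm}x)\le e^{\d_2}l(x)$ by shifting the supremum (so \eqref{eq:slowVaryAC} holds), and — the point that makes (c) work — satisfies $l(x)\ge C_\star e^{-\d_2}\ell(f^{\pm1}x)$ by taking $j=\pm1$. The hard step is the $\ell$-construction, and inside it the summand $F(x)$: there Theorem~\ref{thm:MET}(b) supplies only a $\limsup$, not an honest Oseledets exponent with a tempered constant, so I would apply Kingman's theorem to the subadditive sequence $n\mapsto\log|df^n_x|_{F(x)}|$ to get an $f$-invariant limit $\le\la_\a(x)<\la^-$, and then the standard tempering argument to obtain a tempered $\ell(x)$ with $|df^n_x|_{F(x)}|\le\ell(x)e^{n\la^-}$ for all $n\ge0$. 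This is the only genuinely infinite-dimensional ingredient; in finite dimensions the whole lemma is the routine construction of a Lyapunov metric.

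Finally, for (c): since $\exp_xv=x+v$, the chart map is $\tf_x(v)=f(x+v)-fx$, so $(d\tf_x)_z=df_{x+z}$ and $(d\tf_x)_0=df_x$. Choosing $\d_1\le\rho/4$, for $\d\le\d_1$ and $v\in\tilde{B}_x(\d l(x)^{-1})$ one has $|v|\le2|v|'_x\le2\d l(x)^{-1}\le4\d\le\rho$ (using $l\ge\ell\ge1$), so $x+v\in N$. Converting operator norms via $|\cdot|'_{fx}\le\ell(fx)|\cdot|$ on the target and $|\cdot|\le2|\cdot|'_x$ on the source, and using $|df_{x+z_1}-df_{x+z_2}|\le C_2|z_1-z_2|$, I obtain for $z_1,z_2\in\tilde{B}_x(\d l(x)^{-1})$
\begin{align*}
\lip'\big(d\tf_x\big)\le 4C_2\,\ell(fx)\le\frac{4C_2e^{\d_2}}{C_\star}\,l(x)\le l(x)\ ,
\end{align*}
which is (c)(ii); applying the same estimate to $g:=\tf_x-(d\tf_x)_0$, whose derivative at $v$ has norm $\le C_2|v|\le 2C_2\,\d\, l(x)^{-1}$ on the chart,
\begin{align*}
\lip'(g)\le 2\,\ell(fx)\cdot 2C_2\,\d\, l(x)^{-1}=\frac{4C_2\,\d\,\ell(fx)}{l(x)}\le\frac{4C_2e^{\d_2}}{C_\star}\,\d\le\d\ ,
\end{align*}
which is (c)(i). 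Since $C_\star$ depends only on the fixed data, passing from $\ell$ to $l$ does not disturb temperedness, so $l$ has all the required properties.
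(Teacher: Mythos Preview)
Your argument is correct and is precisely the standard Lyapunov-norm construction: the reindexing for (a), the $n=0$ term for the lower bound in (b), the tempered $\ell$ controlling the series, the sup-over-orbit trick to upgrade $\ell$ to an $l$ with prescribed slow-variation rate $\d_2$, and the norm-change computation for (c). The handling of the $F(x)$ summand via Kingman plus tempering is the right way to deal with the fact that Theorem~\ref{thm:MET}(b) gives only a $\limsup$.

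The paper itself does not prove this lemma. It states explicitly that these are ``standard results on adapted norms'' and that ``the proofs of the following results are identical to those in \cite{BYentropy}'' (Section~4 of that reference), so there is no in-paper proof to compare against. What you have written is essentially the construction carried out in \cite{BYentropy}, specialized to the two-way splitting $E^+ \oplus E^-$ used here, so your approach is the same as the one the paper is citing.
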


\noindent Throughout, the parameters $\d_1$ and $\d_2 > 0$ are fixed
with $\d_2 \ll \la^+ - \la^-$, while $\d \le \d_1$ may be shrunk a finite number of 
times. 
The function $l : \Gamma \to [1,\infty)$ is as in Lemma \ref{lem:charts1}. Paring off a set of zero measure, we may
assume that \eqref{eq:slowVaryAC} holds for pointwise $x \in \Gamma$.

\smallskip

It follows from (a) and (b) above that for all $n \in \mathbb Z^+$,
\begin{align*}
|df^n_xw| & \le 2l(x) e^{n \l^-} |w| \qquad \mbox{for all} \quad w \in E^-_x\ , \\
|df^{-n}_xu| & \le 2l(x) e^{- n \l^+} |u| \qquad \mbox{for all} \quad u \in E^+_x\ .
\end{align*}
Hereafter, we write $\Gamma_{l_0} = \{x \in \Gamma : l(x) \leq l_0\}$ for $l_0 > 1$,
and refer to these as \emph{uniformity sets}.

\section{Preparation: $W^{ss}$-manifolds and transversals}\label{sec:transversals}

%

\noindent {\it Notation:} for $x \in \Gamma$, $r > 0$ we write $\tilde B^{\pm}_x (r) = \{v \in E^{\pm}_x : |v|'_x \leq r\}$, so that $\tilde B_x(r) = \tilde B_x^+(r) + \tilde B_x^-(r) $. We write $\pi^+_x, \pi^-_x$ for the projection operators corresponding to the splitting $\Bc_x = E^+_x \oplus E^-_x$, and for notational simplicity, we write $\tilde f^n_x$ instead of
$\tilde f_{f^{n-1}x} \circ \cdots \tilde f_{fx} \circ \tilde f_x$. We will sometimes omit mention of the point $x \in \Gamma$ at which the adapted norm is taken when it is clear from context.  

\subsection{Local strong stable manifolds}\label{subsec:localStrongStableMflds}

We state the following result, the proof
of which is identical to that of the usual local stable manifolds theorem; see, e.g., 
\cite{lianYoungHilbertMap}.

\begin{thm}\label{thm:stabManifold}
There is a constant $\d_1' \leq \d_1$ with the property that for all $\d \leq \d_1'$, 
there is a family of functions 
$\{h_x : \tilde B_x^-( \d l(x)^{-1}) \to \tilde B_x^{+}(\d l(x)^{-1})\}_{x \in \Gamma}$ such
that
\[
h_x(0) = 0 \qquad \text{ and } \qquad \tilde f_x (\graph h_x) \subset \graph h_{f x} \qquad \text{ for all } x \in \Gamma \, .
\]
With respect to the norms $|\cdot|_x'$, the family $\{h_x\}_{x \in \Gamma}$ has the following additional properties.
\begin{itemize}
\item[(a)] $h_x$ is $C^{1 + \Lip}$- Fr\'echet differentiable, with $(d h_x)_0 = 0$.
\item[(b)] $\Lip' h_x \leq \frac{1}{10}$ and $\Lip' d h_x \leq C l(x)$, where $C > 0$ is independent of $x$.  
\item[(c)] For any $z_1, z_2 \in \graph h_x$, we have the estimate
\[
|\tilde f_x z_1 - \tilde f_x z_2|_{f x}' \leq (e^{\la^-} + \d) |z_1 - z_2|_x' \, .
\]
\item[(d)] The set $\graph(h_x)$ is characterized by
\begin{eqnarray*}
\graph(h_x) & = & \{y \in \tilde B_x(\d l(x)^{-1}) : \tilde f^n_x y \in 
\tilde B_{f^nx}(\d (l(x)e^{\d_2 n})^{-1})  \\
& & \qquad \qquad \mbox{ and } |\tilde f^n_x y|_{f ^nx}' \leq (e^{\la^-} + \d)^n |y|_x'
\ \forall n \ge 1\}\ .
\end{eqnarray*}
\end{itemize}
\end{thm}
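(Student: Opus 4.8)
The plan is to obtain the local strong stable manifolds as graphs of Lipschitz functions via the standard graph transform (Perron–Hadamard) method, adapted to the Lyapunov-chart setup already in place. Fix $\delta \le \delta_1$ small (to be shrunk finitely many times) and work throughout with the adapted norms $|\cdot|'$, so that by Lemma~\ref{lem:charts1}(a) the chart maps $\tilde f_x$ satisfy one-step hyperbolicity with rates $e^{\lambda^+} > e^{\lambda^-}$, $\lambda^- < 0$, and by (c) the nonlinear part $\tilde f_x - (d\tilde f_x)_0$ has Lipschitz constant $\le \delta$ on $\tilde B_x(\delta l(x)^{-1})$. The slow-variation of $l$ (inequality \eqref{eq:slowVaryAC}) guarantees that if $y \in \tilde B_x(\delta l(x)^{-1})$ then $\tilde f_x y$ lands well inside the domain of $\tilde f_{fx}$, so iterates of graphs over the $E^-$-direction can be pushed forward consistently.

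First I would set up the space of admissible graphs: for each $x \in \Gamma$, let $\mathcal{G}_x$ be the set of maps $g : \tilde B_x^-(\delta l(x)^{-1}) \to \tilde B_x^+(\delta l(x)^{-1})$ with $g(0)=0$ and $\Lip' g \le 1/10$, metrized by the sup norm. One shows that $\tilde f_x$ maps $\graph g$ for $g \in \mathcal{G}_x$ onto a set that is again a graph over (a subset of) $E^-_{fx}$ — this uses that on cones of slope $\le 1/10$ the expansion in $E^+$ dominates, so the projection $\pi^-_{fx}$ restricted to $\tilde f_x(\graph g)$ is a bi-Lipschitz bijection onto its image, which contains $\tilde B^-_{fx}(\delta l(fx)^{-1})$ once $\delta$ is small (invoking the slow variation of $l$). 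Precomposing with the inverse of that projection defines the graph transform $\Phi_x : \mathcal{G}_x \to \mathcal{G}_{fx}$; a routine cone/contraction computation, using (a) and (c), shows $\Lip'$ of the image is again $\le 1/10$ and that along a backward orbit the composed transforms $\Phi_{f^{-1}x} \circ \cdots$ contract in sup norm with ratio roughly $(e^{\lambda^-}+\delta)/(e^{\lambda^+}-\delta) < 1$. Taking the limit along $f^{-n}x$ produces the unique $h_x \in \mathcal{G}_x$ with $\tilde f_x(\graph h_x) \subset \graph h_{fx}$, which gives the displayed invariance and the bound $\Lip' h_x \le 1/10$ in (b).

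The remaining properties follow by now-standard arguments. For (a) and the $C^{1+\Lip}$ regularity with $(dh_x)_0 = 0$: since $\tilde f_x$ is $C^2$ with $\Lip' d\tilde f_x \le l(x)$ (Lemma~\ref{lem:charts1}(c)(ii)), one runs the graph transform on the level of candidate tangent planes (a fiber-contraction / $C^1$-section argument), getting a continuous invariant plane field that must be $d h_x$; the Hölder/Lipschitz dependence on the base point then yields $\Lip' dh_x \le C l(x)$ for a uniform $C$, after possibly shrinking $\delta_1$ to some $\delta_1' \le \delta_1$. Property (c), the contraction of $\tilde f_x$ along $\graph h_x$, is immediate: for $z_1, z_2 \in \graph h_x$ with $w_i = \pi^-_x z_i$, the slope bound gives $|z_1 - z_2|' \le \frac{11}{10}|w_1 - w_2|'$ and similarly on the image, while $|\pi^-_{fx}(\tilde f_x z_1 - \tilde f_x z_2)|' \le (e^{\lambda^-}+\delta)|w_1 - w_2|'$ from (a) plus the $\delta$-Lipschitz nonlinear part, combining to the stated bound after absorbing constants into $\delta$ (or stated with a slightly larger rate). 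Property (d) is the graph characterization: the inclusion $\graph h_x \subset \{\,\cdot\,\}$ follows by iterating (c) and the slow-variation estimate for the domains; the reverse inclusion is the usual argument that any point staying in the shrinking tubes and contracting at rate $(e^{\lambda^-}+\delta)^n$ cannot have a nonzero $E^+$-component relative to $h_x$, since such a component would grow like $e^{n\lambda^+}$ and eject the orbit from the chart.

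The main obstacle is the graph-transform step in the Banach (not Hilbert) setting: verifying that $\pi^-_{fx}$ restricted to the image graph $\tilde f_x(\graph g)$ is a bi-Lipschitz homeomorphism onto a set containing the full ball $\tilde B^-_{fx}(\delta l(fx)^{-1})$. In finite dimensions or Hilbert space this is a transversality/open-mapping statement with clean constants; here one must argue purely from the cone condition, the one-step hyperbolicity of Lemma~\ref{lem:charts1}(a), the $\delta$-smallness of the nonlinearity, and the slow variation \eqref{eq:slowVaryAC} controlling how much the admissible domain radius $\delta l(\cdot)^{-1}$ can shrink in one step — the point being that the expansion margin $e^{\lambda^+} - e^{\lambda^-}$ must beat $e^{\delta_2}$, which is exactly the standing assumption $\delta_2 \ll \lambda^+ - \lambda^-$. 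Since the excerpt explicitly says the proof is identical to the standard local stable manifold theorem (citing \cite{lianYoungHilbertMap}), I would simply reference that construction and check that each use of Hilbert-space structure there can be replaced by the Banach-space facts assembled in Section~2 and Lemma~\ref{lem:charts1}.
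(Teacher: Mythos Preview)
Your setup has the direction of the graph transform reversed. You define $\Phi_x : \mathcal{G}_x \to \mathcal{G}_{fx}$ as the \emph{forward} graph transform of a graph over $E^-$, and claim that $\pi^-_{fx}\big(\tilde f_x(\graph g)\big)$ contains the full ball $\tilde B^-_{fx}(\delta l(fx)^{-1})$. This is false: $E^-$ is the \emph{contracting} direction, so $\pi^-_{fx}\circ\tilde f_x$ restricted to $\graph g$ has image of radius roughly $(e^{\lambda^-}+\delta)\,\delta l(x)^{-1}$, which is \emph{strictly smaller} than $\delta l(fx)^{-1} \ge e^{-\delta_2}\delta l(x)^{-1}$ once $\delta_2 \ll |\lambda^-|$. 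The ``expansion margin $e^{\lambda^+}-e^{\lambda^-}$ beats $e^{\delta_2}$'' reasoning you offer in the last paragraph is irrelevant here --- the overflowing condition concerns the $E^-$-projection, not the $E^+$-direction. Consequently, iterating your $\Phi$ along the backward orbit $f^{-n}x \to x$ produces graphs defined over balls of radius $\sim e^{n\lambda^-}$, which collapse to a point rather than converge to a graph over the full $\tilde B^-_x(\delta l(x)^{-1})$.

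The paper's argument (and the one in \cite{lianYoungHilbertMap}) runs the \emph{backward} graph transform: given $h \in \mathcal{W}^{ss}_0(x)$, one constructs $\mathcal{G}_x h \in \mathcal{W}^{ss}_0(f^{-1}x)$ with $\tilde f_{f^{-1}x}(\graph \mathcal{G}_x h)\subset \graph h$ (Lemma~\ref{lem:backwardsGT}). This is well-posed precisely because $E^-$ contracts forward: for any $w \in \tilde B^-_{f^{-1}x}(\delta l(f^{-1}x)^{-1})$ one solves for the $E^+$-component $u$ via a contraction in $E^+$ (driven by the expansion $e^{\lambda^+}$), and the resulting image point automatically lands in the domain of $h$ since its $E^-$-coordinate has norm $\lesssim e^{\lambda^-+\delta_2}\delta l(x)^{-1} \ll \delta l(x)^{-1}$. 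The contraction of $\mathcal{G}_x$ in the weighted sup norm (ratio $\approx (e^{\lambda^-}+\delta)/(e^{\lambda^+}-\delta)$) then yields $h_x$ as the limit $\mathcal{G}_{fx}\circ\cdots\circ\mathcal{G}_{f^nx}(\mathbf{0}_{f^nx})$. Your treatment of items (a)--(d) is otherwise on the right track, but the engine producing $h_x$ needs to be the backward transform.
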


\noindent The {\it local strong stable manifold} at $x$, written 
$W_{{\rm loc}, x}^{ss}$, is defined to be $\exp_x \graph h_x$.

\medskip

Theorem \ref{thm:stabManifold} is obtained via the backwards graph transform, i.e., 
the graph transform taken with respect to $f^{-1}$. More precisely, we define 
$$
\Wc^{ss}_0(x) = \big\{ h : \tilde B^-_x (\d l(x)^{-1}) \to \tilde E^+_x :
h(0)=0 \mbox{ and } \Lip h \leq \frac{1}{10} \big\}\ .
$$
For $h \in \Wc^{ss}_0(x)$, we say the \emph{backwards graph transform} of $h$ is 
well defined and is equal to $\mathcal G_x h$
if there exists a unique mapping $\mathcal G_x h : \tilde B^-_{f^{-1} x}(\d l(f^{-1} x)^{-1}) \to E^+_{f^{-1} x}$ such that
\[
\tilde f_{ x} (\graph \mathcal G_x h) \subset \graph h \, .
\]
We state without proof the following  basic lemma which implies the existence and Lipschitzness of
the family $\{h_x\}$ above and which will be used again later on. 
 
\begin{lem}\label{lem:backwardsGT}
For all $\d \leq \d_1'$ and $x \in \Gamma$,  $\mathcal G_x : \Wc^{ss}_0(x) \to \Wc^{ss}_0(f^{-1} x)$ is well defined and is
a contraction mapping, i.e.  
\[
\| \mathcal G_x h_1 - \mathcal G_x h_2\|_{f^{-1} x, ss} \leq q \|h_1 - h_2 \|_{x, ss} 
\]
for $h_1, h_2 \in \Wc^{ss}_0(x)$, where the norm $\|\cdot\|_{x, ss}$ on $\Wc^{ss}_0(x)$ is defined by
\[
\|  h \|_{x, ss} =  \sup_{v \in \tilde B^-_x(\d l(x)^{-1})} \frac{|  h(v)|_x'}{|v|_x'} \, ,
\]
and $q \in (0,1)$ is a constant independent of $x$. 
\end{lem}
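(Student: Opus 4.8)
The plan is to prove Lemma~\ref{lem:backwardsGT} by the standard graph transform argument, carried out carefully with respect to the adapted norms $|\cdot|'$, using the one-step hyperbolicity in Lemma~\ref{lem:charts1}(a) and the Lipschitz estimates in Lemma~\ref{lem:charts1}(c). First I would set up coordinates: writing $y = f^{-1}x$, the chart map $\tilde f_y : \tilde B_y(\d l(y)^{-1}) \to \Bc_x$ splits into components $\tilde f_y = (\tilde f_y^+, \tilde f_y^-)$ along $\Bc_x = E^+_x \oplus E^-_x$, and by Lemma~\ref{lem:charts1}(a) the $E^+$-component is expanding (rate $\ge e^{\la^+}$) while the $E^-$-component is contracting (rate $\le e^{\la^-}$), up to the $\d$-small Lipschitz error in the nonlinear part from (c)(i). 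Given $h \in \Wc^{ss}_0(x)$, I want to produce $g = \Gc_y h : \tilde B^-_y(\d l(y)^{-1}) \to E^+_y$ whose graph maps into $\graph h$. The defining relation $\tilde f_y(v, g(v)) \in \graph h$ means: for each $v \in \tilde B^-_y$, the point $w := g(v) \in E^+_y$ must satisfy $\tilde f_y^+(v,w) = h(\tilde f_y^-(v,w))$. For fixed $v$, this is a fixed-point equation for $w$; since $w \mapsto \tilde f_y^+(v,w)$ is expanding and $w\mapsto h(\tilde f_y^-(v,w))$ is a contraction (composition of the $\tfrac{1}{10}$-Lipschitz $h$ with the slowly-varying $\tilde f_y^-$), inverting gives a contraction in $w$, so the implicit function / contraction mapping theorem produces a unique $g(v)$. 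One then checks $g(0)=0$ (using $h(0)=0$ and $\tilde f_y(0)=0$) and that $g$ is $\tfrac{1}{10}$-Lipschitz — the latter by differentiating the defining relation and bounding the resulting operator, again using that $\d$ and $\d_2$ are small relative to $\la^+ - \la^-$ — so that $g \in \Wc^{ss}_0(y)$ and $\Gc_y$ is well defined.

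Next I would establish the contraction estimate in the $\|\cdot\|_{ss}$ norm. Take $h_1, h_2 \in \Wc^{ss}_0(x)$ with transforms $g_1 = \Gc_y h_1$, $g_2 = \Gc_y h_2$. Fix $v \in \tilde B^-_y(\d l(y)^{-1})$ and write $w_i = g_i(v)$, $z_i = \tilde f_y(v, w_i) = (\tilde f_y^+(v,w_i), \tilde f_y^-(v,w_i)) \in \graph h_i$. Subtracting the two defining relations $\tilde f_y^+(v,w_i) = h_i(\tilde f_y^-(v,w_i))$ and using the triangle inequality, one gets a bound of the form
\[
|\tilde f_y^+(v,w_1) - \tilde f_y^+(v,w_2)|' \le \|h_1 - h_2\|_{x,ss}\, |\tilde f_y^-(v,w_1)|' + \tfrac{1}{10}\,|\tilde f_y^-(v,w_1) - \tilde f_y^-(v,w_2)|'\, .
\]
Using the expansion on the left (rate $e^{\la^+}$, minus a $\d$-error), the contraction of $\tilde f_y^-$ on the right (rate $e^{\la^-}+\d$, and $|\tilde f_y^-(v,w_1)|'\lesssim |v|'$ since $z_1\in\graph h_1$), and the fact that $|w_1 - w_2|' = |g_1(v) - g_2(v)|'$ controls $|\tilde f_y^-(v,w_1) - \tilde f_y^-(v,w_2)|'$ up to a small constant, I can solve for $|g_1(v)-g_2(v)|'$ and obtain $|g_1(v) - g_2(v)|' \le q\,|v|'\,\|h_1-h_2\|_{x,ss}$ with $q := \frac{C(e^{\la^-}+\d)}{e^{\la^+}-\d} < 1$ once $\d$ is small and $\d_2 \ll \la^+ - \la^-$ (the latter entering because $l(y)$ and $l(x)$, hence the chart sizes, differ only by $e^{\d_2}$). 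Dividing by $|v|'$ and taking the supremum gives $\|g_1 - g_2\|_{y,ss} \le q\|h_1 - h_2\|_{x,ss}$, with $q$ independent of $x$ since all rates $\la^\pm$, $\d$, $\d_2$ are fixed globally.

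The main obstacle I anticipate is purely bookkeeping rather than conceptual: keeping track of the three different small parameters ($\d$, $\d_2$, and the $\tfrac{1}{10}$ Lipschitz bound) and the two nearby chart points $x$ and $y = f^{-1}x$, ensuring that the domain $\tilde B^-_y(\d l(y)^{-1})$ is the correct size so that $\tilde f_y$ of its graph actually lands inside the chart at $x$ where $h_i$ is defined (this is where \eqref{eq:slowVaryAC}, $l(f^{-1}x) \le e^{\d_2} l(x)$, is used, together with the contraction), and that the implicitly-defined $g(v)$ stays within $\tilde B^+_y(\d l(y)^{-1})$. A secondary point requiring care is verifying the Lipschitz bound $\Lip' g \le \tfrac{1}{10}$: one differentiates the relation $\tilde f_y^+(v,g(v)) = h_i(\tilde f_y^-(v,g(v)))$, solves for $(dg)_v$ as $((d\tilde f_y^+)\text{-block} - (dh)(d\tilde f_y^-)\text{-block})^{-1}$ times a similar expression, and bounds the operator norm using (c)(i)–(ii); the smallness of $\d$ and the gap $\la^+ - \la^- \gg \d_2$ make this ratio $< \tfrac{1}{10}$. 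Since, as the paper notes, these estimates are identical to the standard stable-manifold construction (cf.\ \cite{lianYoungHilbertMap}), I would present the structure above and relegate the routine inequalities to citation.
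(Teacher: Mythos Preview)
Your proposal is correct and follows precisely the standard graph-transform argument that the paper has in mind; the paper in fact states this lemma without proof, calling it a ``basic lemma'' and referring to \cite{lianYoungHilbertMap} for the stable manifold construction. One trivial notational slip: in the paper's convention the transform is indexed by the source point of $h$, so you should write $g = \Gc_x h$ rather than $\Gc_y h$ (with $y = f^{-1}x$), but your understanding of the map is correct.
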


\subsection{Iterated transversals}\label{subsec:iteratedTrans}

In the local version of our result, the transversals $\Sigma^1$ and $\Sigma^2$
(see {\bf Theorem A}) will be pieces of manifolds contained inside the domains of a chart at some $x \in \Gamma$, and they will be of the form 
$\exp_x(\graph (g^i))$ for some $g^i: E_x^+ \to E_x^-, \ i=1,2$. 
For reasons to become clear momentarily,
 it will be necessary to consider shrinking charts. 
Let $\la_c := \frac12 \la^- < 0$ when $\la^+ > 0$, and $\la_c := \frac12(\la^+ + \la^-)$ when $\la^+ < 0$: the exponent $\la_c < 0$ will be the rate at which our charts shrink.

\begin{lem}\label{lem:forwardGT}
 For $\d > 0$ sufficiently small, the following holds. 
Fix $l_0 \geq 1$ and $c_0 \geq l_0$. Let $x \in \Gamma_{l_0}$, and let $g_0 : \tilde B^{+}_x(\d c_0^{-1}) \to \tilde B^-_x(\d c_0^{-1})$ be a $C^{1 + \Lip}$-Fr\'echet differentiable map for which $\Lip' (g_0) \leq \frac{1}{10}$.
Then, writing $c_n = e^{-n \la_c} c_0$, there exists a sequence of $C^{1 + \Lip}$ maps 
$$
g_n : \tilde B^{+}_{f^n x}(\d c_n^{-1}) \to \tilde B^-_{f^n x}(\d c_n^{-1})\ , \quad n \geq 1\ , 
$$
with the following properties:
\begin{itemize}
\item[(a)] $\{g_n\}_{n \geq 0}$ is a (forward) graph transform sequence along the charts $\{\tilde B_{f^n x}(\d c_n^{-1}) \}_{n \geq 0}$, i.e., for all $n \geq 1$ we have 
$$
\graph g_{n + 1} = \tilde B_{f^{n + 1} x} (\d c_{n + 1}^{-1}) \cap  \tilde f_{f^{n} x} (\graph g_n ) \, ,
$$
\item[(b)] For all $n \geq 1$, we have that:
  \begin{itemize}
  \item $\Lip' (g_n) \leq \frac{1}{10}$, and   
  \item $\Lip' (d g_n) \leq  2 e^{n \d_2} (l_0 + \Lip' (d g_0))$. 
  \end{itemize}
\item[(c)] Let $0 \leq k < n$ and let $u_n^i \in \tilde B^{+}_{f^n x}(\d c_n^{-1})$, $i = 1,2$. Then $u_n^i +  g_n(u_n^i) = \tilde f_{f^k x}^{n-k} (u_k^i + g_k(u_k^i))$ for some $u_k^i \in \tilde B^{+}_{f^k x}(\d c_k^{-1})$, and
\[
|u_n^1 - u_n^2|_{f^n x}' \geq (e^{\la^+} - \d)^{n-k} |u_k^1 - u_k^2|_{f^k x}' \, .
\]
\end{itemize}
\end{lem}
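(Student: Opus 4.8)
The plan is to construct the sequence $\{g_n\}$ inductively, applying the one-step graph transform through the chart maps $\tilde f_{f^n x}$ while keeping careful track of the shrinking domain radii $c_n = e^{-n\la_c} c_0$. The inductive hypothesis at stage $n$ will be that $g_n$ is $C^{1+\Lip}$, maps $\tilde B^+_{f^n x}(\d c_n^{-1})$ into $\tilde B^-_{f^n x}(\d c_n^{-1})$, and satisfies the two Lipschitz bounds in (b). To pass from $n$ to $n+1$ I would study the image $\tilde f_{f^n x}(\graph g_n)$: write $\tilde f_{f^n x} = (d\tilde f_{f^n x})_0 + R$ where $\Lip' R \leq \d$ by Lemma \ref{lem:charts1}(c)(i). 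Since $(d\tilde f_{f^n x})_0$ preserves the splitting with $\|\cdot\|$ expanded by at least $e^{\la^+}$ on $E^+$ (Lemma \ref{lem:charts1}(a)), the standard cone/implicit function argument shows $\tilde f_{f^n x}(\graph g_n)$ is again a graph of a function $\hat g_{n+1}$ over $E^+_{f^{n+1}x}$, with $\Lip' \hat g_{n+1}$ contracted — so long as $\d$ is small relative to $\la^+ - \la^-$, a $\frac{1}{10}$ Lipschitz bound is preserved with room to spare. Then $g_{n+1}$ is the restriction of $\hat g_{n+1}$ to the smaller ball $\tilde B^+_{f^{n+1}x}(\d c_{n+1}^{-1})$; one must check this restriction makes sense, i.e. that the domain of $\hat g_{n+1}$ actually contains this ball, which is where the choice of $\la_c$ enters.

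The point of taking $\la_c < 0$ — specifically $\la_c = \tfrac12\la^-$ when $\la^+>0$, and $\la_c = \tfrac12(\la^+ + \la^-)$ when $\la^+ < 0$ — is precisely to guarantee that the domains shrink slowly enough to stay inside the nonlinear images but fast enough to remain inside the charts. On the $E^+$ side, the linear part expands by roughly $e^{\la^+}$ while the chart at step $n+1$ has radius $\d c_{n+1}^{-1} = \d c_n^{-1} e^{\la_c}$; since the nonlinear map on a ball of radius $\d c_n^{-1}$ is $e^{\la^+} - \d$-expanding in the $E^+$ direction and $c_n^{-1}$ decays at rate $e^{\la_c}$ with $\la_c < \la^+$ (this is where the two cases for $\la_c$ matter: when $\la^+ > 0$ we have $\la_c = \tfrac12\la^- < 0 < \la^+$, and when $\la^+ < 0$ we have $\la_c = \tfrac12(\la^+ + \la^-) < \la^+$), the preimage of $\tilde B^+_{f^{n+1}x}(\d c_{n+1}^{-1})$ under the $E^+$-projection of $\tilde f_{f^n x}|_{\graph g_n}$ sits well inside $\tilde B^+_{f^n x}(\d c_n^{-1})$. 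This both defines $u_k^i$ in part (c) and yields the expansion estimate $|u_n^1 - u_n^2|' \geq (e^{\la^+} - \d)^{n-k}|u_k^1 - u_k^2|'$ by composing the one-step lower bounds: on a graph with Lipschitz constant $\leq \tfrac{1}{10}$, the $E^+$-component of $\tilde f$ expands distances along the graph by at least $e^{\la^+} - \d$ once $\d$ is small, using Lemma \ref{lem:charts1}(a) together with the $\Lip' \d$ control of the nonlinear remainder.

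For the $C^{1+\Lip}$ regularity and the bound $\Lip'(dg_{n+1}) \leq 2e^{(n+1)\d_2}(l_0 + \Lip'(dg_0))$, I would differentiate the graph transform relation. The derivative $dg_{n+1}$ at a point is determined by $d\tilde f_{f^n x}$ and $dg_n$ through an explicit algebraic formula (inverting the $E^+ \to E^+$ block, which is uniformly invertible by one-step hyperbolicity). Its Lipschitz constant picks up a contribution from $\Lip'(d\tilde f_{f^n x}) \leq l(f^n x) \leq l_0 e^{n\d_2}$ (Lemma \ref{lem:charts1}(c)(ii), using that $x \in \Gamma_{l_0}$ and the slow-variation \eqref{eq:slowVaryAC}) plus a contraction-times-$\Lip'(dg_n)$ term; since the linear part contracts this second-derivative quantity by a factor bounded by $e^{\la^- - \la^+}$ which is much smaller than $e^{\d_2}$, the recursion $\Lip'(dg_{n+1}) \leq e^{\d_2} l_0 e^{n\d_2} + (\text{small})\cdot \Lip'(dg_n)$ closes to give the stated geometric bound, with the factor $2$ absorbing the geometric series.

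The main obstacle is bookkeeping the shrinking-chart geometry: one must verify at each inductive step that $\tilde f_{f^n x}(\graph g_n)$, intersected with the next chart $\tilde B_{f^{n+1}x}(\d c_{n+1}^{-1})$, is genuinely the full graph of a function over all of $\tilde B^+_{f^{n+1}x}(\d c_{n+1}^{-1})$ — not merely over a sub-ball — and that the corresponding domain points $u_k^i$ in (c) lie in the asserted balls. This requires matching the expansion rate $e^{\la^+}-\d$ of the dynamics in the $E^+$ direction against the shrink rate $e^{\la_c}$ of the chart radii, and is exactly why $\la_c$ is chosen strictly between the relevant exponents and why $\d$ must be taken small depending on $\la^+ - \la^-$ and $\la_c$. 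Everything else — the cone estimates, the Lipschitz contraction of the graph transform, the derivative recursion — is the standard Hadamard–Perron machinery adapted to the adapted-norm charts, and goes through as in \cite{BYentropy} and \cite{lianYoungHilbertMap}.
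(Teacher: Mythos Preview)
Your approach is the same standard graph-transform induction the paper sketches, and your treatment of the domain containment via $\la_c < \la^+$ matches the paper's point (1) exactly. There is, however, one genuine gap: you never verify the \emph{range} containment, i.e.\ that $g_{n+1}$ actually lands in $\tilde B^-_{f^{n+1}x}(\d c_{n+1}^{-1})$. You list this as part of the inductive hypothesis but your inductive step only discusses the $E^+$ side (``matching the expansion rate $e^{\la^+}-\d$ against the shrink rate $e^{\la_c}$''). Since $g_0$ is not assumed to pass through the origin, the graph could in principle drift out of the shrinking chart in the $E^-$ direction, and this must be ruled out.

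The paper's sketch handles this (its point (2)) by a different mechanism than pure contraction bookkeeping: it observes that $\graph g_0$ and the local strong stable manifold $\graph h_x$ intersect at a point $z$, and $\tilde f^n_x(z) \to 0$ at rate $(e^{\la^-}+\d)^n$, which is faster than the chart radii $\d c_n^{-1} = \d c_0^{-1} e^{n\la_c}$ shrink since $\la^- < \la_c$. Combined with $\Lip'(g_n) \le \tfrac{1}{10}$ and the domain radius $\d c_n^{-1}$, this anchors the whole graph inside the chart. You could alternatively close this gap directly within your framework by bounding $|\pi^-_{f^{n+1}x}\tilde f_{f^n x}(u+g_n(u))|' \le (e^{\la^-}+\d)\,\d c_n^{-1} \le e^{\la_c}\,\d c_n^{-1} = \d c_{n+1}^{-1}$, again using $\la^- < \la_c$; either way, this half of the $\la_c$ sandwich is the missing ingredient in your writeup.
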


As $E^+$ is finite-dimensional, the proof of Lemma \ref{lem:forwardGT} follows from standard graph transform arguments which we summarize here: 
\begin{itemize}
\item[(1)] Even though $d \tilde f_{ x}|_{E^+_{ x}}$ may be contracting, the presumptive domain $\pi^+_{f x} \circ \tilde f_{ x} \big( \graph g_0 \big)$ of the graph transform $g_{ 1}$ contains $\tilde B_{f x}^+(\d c_{1}^{-1})$ because $\la_c < \la^+$; the same 
comment applies to all subsequent steps. 
\item[(2)] Since $\graph g_0$ need not pass through $0$, it must be checked that each 
$\graph g_n$ sits inside the diminished
chart at $f^n x$; this is ensured because
there is a point $z \in \graph g_0 \cap \graph h_{ x}$, where $W_{{\rm loc}, x}^{ss} = \exp_x \graph h_x$, and $\tilde f^n_x(z)$ tends to $0$ much more quickly than the rate at which chart sizes shrink, i.e., $\la_c > \la^-$.
\end{itemize}

Though not yet justified at this point, we will refer to the manifolds $\exp_{f^nx}(\graph(g_n))$ where the $g_n$ are as in Lemma  \ref{lem:forwardGT} as 
{\it transversals} to the $W^{ss}$-foliation.

We record below two properties of the sequence of transversals defined by $\{g_n\}$. 
The first says that they become increasingly ``flat" in a sense to be made precise, and the second gives a distortion estimate on 
the dynamics restricted to these transversals.
The setting and notation are as in
Lemma \ref{lem:forwardGT}.

\begin{lem}\label{lem:expFlatness}
The sequence of functions $g_n$ has the property that
\begin{equation}
\label{eq:gnFlat}
\sup_{u \in \tilde B^+_{f^n x} (\d c_n^{-1}) } |(d g_n)_u|_{f^n x}' \quad \to \ 0
\quad \mbox{ exponentially fast as } n \to \infty\ ,
\end{equation}
with uniform bounds (independent of $x$) depending only on $l_0$, $\la^+, \la^-$
and $\Lip'(dg_0)$.
\end{lem}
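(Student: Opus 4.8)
The plan is to track the quantity $M_n := \sup_{u \in \tilde B^+_{f^n x}(\d c_n^{-1})} |(d g_n)_u|'$ and show it contracts by a definite factor at each step, up to a controlled error coming from the nonlinearity of $\tilde f$. The starting point is the graph transform relation $\graph g_{n+1} = \tilde B_{f^{n+1}x}(\d c_{n+1}^{-1}) \cap \tilde f_{f^n x}(\graph g_n)$ from Lemma~\ref{lem:forwardGT}(a). Differentiating the implicit relation that defines $g_{n+1}$ from $g_n$: if $z = u + g_n(u)$ and $\tilde f_{f^n x}(z) = u' + g_{n+1}(u')$, then the tangent space $T_z(\graph g_n) = \{v + (dg_n)_u v\}$ is mapped by $(d\tilde f_{f^n x})_z$ onto $T_{\tilde f z}(\graph g_{n+1})$. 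Writing $(d\tilde f)_z = A_n + R_n$ where $A_n = (d\tilde f_{f^n x})_0$ respects the splitting (so $A_n|_{E^+}$ expands by at least $e^{\la^+}$ and $A_n|_{E^-}$ contracts by at most $e^{\la^-}$ in the adapted norm, by Lemma~\ref{lem:charts1}(a)) and $\Lip'(R_n) = \Lip'(\tilde f_{f^n x} - (d\tilde f_{f^n x})_0) \le \d$ by Lemma~\ref{lem:charts1}(c)(i), one obtains the standard estimate
\begin{equation*}
M_{n+1} \le \frac{e^{\la^-} M_n + \d(1 + M_n)}{e^{\la^+} - \d(1 + M_n)}\ .
\end{equation*}
Since $M_n \le \frac{1}{10}$ for all $n$ by Lemma~\ref{lem:forwardGT}(b), and $\d$ may be taken as small as needed, the denominator is bounded below by $e^{\la^+} - 2\d$ and the numerator is at most $e^{\la^-} M_n + 2\d$, so $M_{n+1} \le \theta M_n + C\d$ with $\theta := \frac{e^{\la^-} + 2\d}{e^{\la^+} - 2\d}$.

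The subtlety is that $\theta < 1$ requires $\la^- < \la^+$, which is given, but only after $\d$ is shrunk so that $e^{\la^-} + 2\d < e^{\la^+} - 2\d$; this is consistent with ``$\d$ sufficiently small'' in the hypothesis of Lemma~\ref{lem:forwardGT}. Once $\theta < 1$, the recursion $M_{n+1} \le \theta M_n + C\d$ does not by itself give $M_n \to 0$ — it only gives $\limsup M_n \le C\d/(1-\theta)$. To actually get convergence to zero I would instead exploit the fact that the affine term $C\d$ is really $C\d(1 + M_n)$ multiplied against an expanding-direction factor, but more cleanly: the term $\d(1+M_n)$ in the numerator can be replaced by $\d M_n + \d$ where the lone $\d$ reflects the fact that $\graph g_n$ is shifted off the origin. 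However, the shift $z$ itself shrinks: since $\graph g_n$ meets $\graph h_{f^n x} = \exp$-image of the local stable manifold at a point $z_n$ with $|z_n|' \to 0$ at rate close to $e^{n\la^-}$ (by Theorem~\ref{thm:stabManifold}(d) / item (2) in the discussion after Lemma~\ref{lem:forwardGT}), and $g_n$ has Lipschitz constant $\le 1/10$, the relevant ``off-center'' contribution to the nonlinearity error at the base point of $g_{n+1}$ is controlled by $\Lip'(dg_n) \cdot |z_n|'$ plus $\Lip'(d\tilde f)\cdot|z_n|' \le l_0 e^{\d_2 n}|z_n|'$ — and by Lemma~\ref{lem:forwardGT}(b) we know $\Lip'(dg_n)$ grows at most like $e^{n\d_2}$, which is beaten by $|z_n|' \lesssim e^{n\la^-}$ since $\d_2 \ll \la^+ - \la^-$. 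So the correct recursion is
\begin{equation*}
M_{n+1} \le \theta M_n + C e^{n(\la^- + \d_2)} (1 + \Lip'(dg_0))\ ,
\end{equation*}
whose solution satisfies $M_n \le C' \max\{\theta, e^{\la^- + \d_2}\}^n (1 + \Lip'(dg_0)) \to 0$ exponentially, with the constants depending only on $l_0, \la^+, \la^-$ and $\Lip'(dg_0)$, as claimed.

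The main obstacle, then, is not the contraction of the \emph{linearized} graph transform — that is routine — but rather showing that the additive error term genuinely decays geometrically rather than sitting at a fixed size $\sim \d$. This forces one to go back to the construction in Lemma~\ref{lem:forwardGT}, locate the point $z_n \in \graph g_n \cap \graph h_{f^n x}$, and use that $\tilde f^n_x z_0 = z_n$ decays at rate $\approx e^{n\la^-}$ (strictly faster than the chart-shrinking rate $\la_c$, which is itself $> \la^-$) to linearize $\tilde f_{f^n x}$ about $z_n$ with an error of size $\lesssim \Lip'(d\tilde f_{f^n x}) |z_n|' \le l_0 e^{n\d_2} e^{n\la^-}$. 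I would also need to double-check that differentiating the graph transform is legitimate at the level of regularity claimed, i.e. that $g_n$ is genuinely $C^{1+\Lip}$ with the derivative bound of Lemma~\ref{lem:forwardGT}(b) — but that is already granted by the statement of that lemma, so it may be invoked directly. Assembling these pieces, the exponential decay in \eqref{eq:gnFlat} follows, with the stated uniformity.
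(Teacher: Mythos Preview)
Your first recursion $M_{n+1} \le \theta M_n + C\d$ is correctly derived, and you are right that it alone does not force $M_n \to 0$. The gap is in your repair. You claim the additive error can be replaced by $C e^{n(\la^- + \d_2)}$ on the grounds that the intersection point $z_n \in \graph g_n \cap \graph h_{f^n x}$ satisfies $|z_n|' \lesssim e^{n\la^-}$. But $M_n$ is a supremum over \emph{all} $u$ in $\tilde B^+_{f^n x}(\d c_n^{-1})$, and at a generic point $z = u + g_n(u)$ on the graph one only has $|z|' \lesssim \d c_n^{-1} \approx e^{n\la_c}$, which is much larger than $e^{n\la^-}$ since $\la_c > \la^-$. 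The off-diagonal part of $(d\tilde f_{f^n x})_z$ is therefore bounded by $l(f^n x)|z|' \approx e^{n(\d_2 + \la_c)}$, not by $e^{n(\la^- + \d_2)}$. Linearizing about $z_n$ instead of $0$ does not help: $(d\tilde f)_{z_n}$ no longer respects the splitting $E^+ \oplus E^-$, and the residual at a general $z$ is $\Lip'(d\tilde f)\,|z - z_n|'$, again of the order of the domain diameter. The quantity $\Lip'(dg_n)\cdot |z_n|'$ you compute does not bound any term that actually appears in the recursion for the supremum $M_n$.

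The paper separates the two effects you are conflating. First it bounds the slope at the \emph{single} point $u_n = \pi^+ z_n$: there your $|z_n|'$-reasoning is legitimate, and the standard one-point tangent graph transform yields $|(dg_n)_{u_n}|' \lesssim e^{n(\la^- - \la^+ + \d)}$. Second, it extends to the whole domain via
\[
\sup_u |(dg_n)_u|' \le |(dg_n)_{u_n}|' + \Lip'(dg_n) \cdot \diam\big(\tilde B^+_{f^n x}(\d c_n^{-1})\big) \lesssim e^{n(\la^- - \la^+ + \d)} + e^{n(\d_2 + \la_c)} \, ,
\]
using $\Lip'(dg_n) \le 2 e^{n\d_2}(l_0 + \Lip'(dg_0))$ from Lemma~\ref{lem:forwardGT}(b) and the chart radius $\d c_n^{-1} = \d c_0^{-1} e^{n\la_c}$. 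Your direct-recursion approach can also be salvaged, but the additive error must be taken as $\approx e^{n(\d_2 + \la_c)}$ (coming from the shrinking chart), not $e^{n(\la^- + \d_2)}$.
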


\begin{proof} Let $z_0 \in \graph(g_0) \cap \graph (h_x)$, $z_n = \tilde f^n_x(z_0)$,
and write $z_n = u_n + g_n(u_n)$. From $z_n \in \graph h_x$, it follows from contraction along $W^{ss}$-leaves and a standard graph transform argument that 
\begin{align}\label{eq:flatGraph}
|(dg_n)_{u_n}|' \lesssim e^{n (\la^- - \la^+ + \d)} \, ,
\end{align} where $\lesssim$ refers to inequality up to a multiplicative constant depending only on $l_0$. The lemma follows from this, together with the fact that $\Lip' (d g_n) \leq  2 e^{n \d_2} (l_0 + \Lip' (d g_0))$ and the domain
of $g_n$ has diameter $\d c_0^{-1} e^{\la_c n}$, which shrinks faster than $\Lip' (d g_n)$
can grow. 
\end{proof}

\smallskip

\begin{lem}\label{lem:distTransversals}
For any $l, L \geq 1$ there exists a constant $D_{l, L} > 0$ with the following property. Let $x \in \Gamma$, $c_0 \geq l(x)$, and let $g_0$ be as in Lemma \ref{lem:forwardGT}; set $L_0 = \Lip'(d g_0)$. Then, for any $n \geq 1$ and $y^1, y^2 \in f^{-n} (\exp_{f^n x} \graph g_n)$, we have the estimate
\[
\bigg| \log \frac{\det ( df^n_{ y^1} | T_{ y^1} (\exp_x \graph g_0)) }{\det ( df^n_{ y^2} | T_{ y^2} (\exp_x \graph g_0)) } \bigg| \leq D_{l(x), L_0}   \cdot \max\big\{ \big(e^{- \d_2}(e^{\la^+} - \d) \big)^{-n} , 1 \big\} \cdot |f^n y^1 - f^n y^2| \, ,
\]
where $T_yW$ denotes the tangent space to the manifold $W$ at $y$.
\end{lem}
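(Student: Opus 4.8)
The plan is the standard ``telescope, apply determinant regularity, sum a geometric series'' scheme, carried out in Lyapunov charts. Write $W_j=\exp_{f^jx}(\graph g_j)$. Since the $g_j$ form a forward graph transform sequence (Lemma~\ref{lem:forwardGT}(a)), for $y\in f^{-n}(\exp_{f^nx}\graph g_n)$ one has $f^jy\in W_j$ for every $0\le j\le n$, and $df^j_y$ carries $T_yW_0$ onto $T_{f^jy}W_j$; by the cocycle property of the determinant (valid for any choice of volume) this gives the telescoping identity
\[
\log\frac{\det(df^n_{y^1}\mid T_{y^1}W_0)}{\det(df^n_{y^2}\mid T_{y^2}W_0)}
=\sum_{j=0}^{n-1}\Bigl(\log\det(df_{f^jy^1}\mid T_{f^jy^1}W_j)-\log\det(df_{f^jy^2}\mid T_{f^jy^2}W_j)\Bigr).
\]

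For the $j$-th term I would apply Proposition~\ref{prop:detReg} (with Remark~\ref{rmk:constantsDependence} for the constant dependence) to the operators $df_{f^jy^i}=(d\tilde f_{f^jx})_{z^i_j}$, where $z^i_j=\exp_{f^jx}^{-1}(f^jy^i)=u^i_j+g_j(u^i_j)\in\graph g_j$, restricted to $T_{f^jy^i}W_j=\{v+(dg_j)_{u^i_j}v:v\in E^+_{f^jx}\}$. One‑step hyperbolicity (Lemma~\ref{lem:charts1}(a)) together with Lemma~\ref{lem:forwardGT}(c) and $\Lip'g_j\le\tfrac1{10}$ shows $df_{f^jy^i}|_{T_{f^jy^i}W_j}$ is uniformly bounded and boundedly invertible, the relevant bounds degrading only through the adapted‑to‑ambient comparison $l(f^jx)\le l(x)e^{\d_2 j}$ of Lemma~\ref{lem:charts1}(b); Proposition~\ref{prop:detReg} then bounds the $j$-th term by $L_j\bigl(\lVert df_{f^jy^1}-df_{f^jy^2}\rVert+d_H(T_{f^jy^1}W_j,T_{f^jy^2}W_j)\bigr)$. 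Its smallness hypotheses are met because $f^jy^1,f^jy^2$ both lie in the chart of radius $\d c_j^{-1}=\d e^{\la_c j}/c_0$ at $f^jx$, which shrinks (at rate $|\la_c|$) faster than the admissible perturbation size $\epsilon_j$ — this is exactly the purpose of the shrinking charts of Lemma~\ref{lem:forwardGT}, and it is why one imposes $\d_2$ small relative to $|\la_c|$ (in addition to $\d_2\ll\la^+-\la^-$) and allows $\d$ to be shrunk.

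Next, both perturbation quantities are estimated in terms of $|u^1_j-u^2_j|'$: the derivative difference via the Lipschitz bound $\Lip'(d\tilde f_{f^jx})\le l(f^jx)$ of Lemma~\ref{lem:charts1}(c)(ii) (or the ambient $C^2$ bound on $f$), and $d_H(T_{f^jy^1}W_j,T_{f^jy^2}W_j)$, up to norm comparisons, via $\lVert(dg_j)_{u^1_j}-(dg_j)_{u^2_j}\rVert'\le\Lip'(dg_j)\,|u^1_j-u^2_j|'\le 2e^{\d_2 j}(l(x)+L_0)\,|u^1_j-u^2_j|'$ from Lemma~\ref{lem:forwardGT}(b). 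Back‑propagating distances with the transversal expansion of Lemma~\ref{lem:forwardGT}(c) gives $|u^1_j-u^2_j|'\le(e^{\la^+}-\d)^{-(n-j)}|u^1_n-u^2_n|'$, and $|u^1_n-u^2_n|'=|z^1_n-z^2_n|'\le l(f^nx)\,|z^1_n-z^2_n|=l(f^nx)\,|f^ny^1-f^ny^2|$ by Lemma~\ref{lem:charts1}(b). Assembling, the $j$-th term is $\le C_{l(x),L_0}\,e^{c\d_2 j}\,(e^{\la^+}-\d)^{-(n-j)}\,l(f^nx)\,|f^ny^1-f^ny^2|$ with $c$ depending only on $k=\dim E^+$; summing this essentially geometric series over $j$ and using $\d_2\ll\la^+-\la^-$ collapses it to the factor $\max\{(e^{-\d_2}(e^{\la^+}-\d))^{-n},1\}$, the $e^{-\d_2}$ correction being what absorbs the $l(f^nx)\le l(x)e^{\d_2 n}$ and the slow $e^{c\d_2 j}$ growth of the per‑step constants.

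\textbf{Main obstacle.} The crux is precisely this last accounting: along the orbit the adapted/ambient norm‑comparison constant, the curvature $\Lip'(dg_j)$ of the iterated transversals, and the regularity constant $L_j$ of Proposition~\ref{prop:detReg} all grow at a small exponential rate $\sim\d_2$, while the backward orbit geometry contracts at the transversal‑expansion rate $\sim\la^+$; one must organize these competing exponentials, together with the single norm conversion at time $n$, so that they amount to no more than the stated $\max\{(e^{-\d_2}(e^{\la^+}-\d))^{-n},1\}$ and a constant depending on $l(x),L_0$ — which is what dictates the smallness requirements on $\d_2$ and on $\d$. A secondary technical point is that the determinant depends on the choice of reference norm, so that using adapted norms (to keep the per‑step regularity constants governed by the orbit's slow‑growth function rather than degrading further) introduces boundary correction terms from the change of norm between successive charts; these telescope and are controlled using the exponential flatness of the transversals (Lemma~\ref{lem:expFlatness}).
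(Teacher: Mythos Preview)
Your proposal is correct and follows essentially the same approach as the paper: the paper defers to Proposition~5.8 of \cite{BYentropy} (the standard telescope, per-step determinant regularity via Proposition~\ref{prop:detReg}, geometric summation), noting that the only substantive modification is precisely the back-propagation estimate $|z^1_k - z^2_k|'_{f^kx} \le (e^{\la^+}-\d)^{-(n-k)}|z^1_n - z^2_n|'_{f^nx}$ from Lemma~\ref{lem:forwardGT}(c), which you identify and use correctly. Your accounting in the ``Main obstacle'' paragraph of the competing slow exponentials (from $l(f^jx)$, $\Lip'(dg_j)$, and the regularity constant of Proposition~\ref{prop:detReg}) against the transversal back-propagation rate is exactly the bookkeeping that produces the $\max\{(e^{-\d_2}(e^{\la^+}-\d))^{-n},1\}$ factor and the dependence of $D$ on $l(x), L_0$.
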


\begin{proof}
The case $\la^+ > 0$ follows verbatim from the proof of Proposition 5.8 in \cite{BYentropy}. In the case $\la^+ < 0$, Lemma \ref{lem:distTransversals} follows from similar arguments to those in \cite{BYentropy}. The only substantive difference is that an expansion estimate along unstable manifolds is replaced with the following `weak contraction' estimate along transversals (c.f. Lemma \ref{lem:forwardGT}, item (c)):
\[
|z^1_k - z^2_k|_{f^k x}' \leq (e^{\la^+} - \d)^{- (n - k)} |z^1_n - z^2_n|_{f^n x}' 
\]
for $0 \leq k < n$; here we have written $z^i_k = \exp_{f^k x}^{-1} f^k y^i$ for $i = 1,2$ and $0 \leq k \leq n$.

Another difference is that the constant $D$ appearing in the distortion estimate depends now on the Lipschitz constant $L_0$ of $d g_0$; this, however, does not substantially change the arguments in \cite{BYentropy}.
\end{proof}

\subsection{Continuity of holonomy maps along the $W^{ss}$ ``foliation"}\label{sec:stronglyStable}

In preparation for the proof of absolute continuity of holonomy maps,  we first establish
their continuity, which we carry out in some detail, following the outline below:

\medskip
Step 1. continuity of $x \mapsto E^-_x$ \ (Lemma \ref{lem:contSplitting})

Step 2. continuity of $k$-step backward graph transforms \ (Lemma \ref{lem:contGraphTform})

Step 3. continuity of $x \mapsto W^{ss}_{{\rm loc}, x}$ \ (Lemma \ref{lem:stableStacks})\ , and finally

Step 4. continuity of holonomy maps along $W^{ss}_{\rm loc}$-leaves \ (Lemma \ref{lem:continuousHolonomy})

\medskip

We begin with the continuity of the distribution $E^-$.
For $l_0 > 1$ we write 
\[
\Gamma^+_{l_0} = \{x \in \Gamma : |d f^n_x|_{E^-_x}| \leq l_0 e^{n \la^-}  \text{ for all } n \geq 0\} \, ;
\]
the sets $\Gamma^+_{l_0}$ are referred to as \emph{forward uniformity sets}, as they only detect information along \emph{forward} trajectories. 

\begin{lem}\label{lem:contSplitting}
Let $l_0 > 1$ be fixed. Then, $x \mapsto E^-_x$ varies continuously in the Hausdorff metric $d_H$ as $x$ varies in $\Gamma_{l_0}^+$.
\end{lem}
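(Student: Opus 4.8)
The plan is to show that $E^-_x$, being an intersection of a decreasing sequence of finite-codimensional subspaces, can be captured as a limit of explicitly-constructed subspaces that depend continuously on $x$, and then use the uniform contraction built into $\Gamma^+_{l_0}$ to get a uniform (hence continuous-limit-friendly) rate of convergence. Concretely, for $x \in \Gamma^+_{l_0}$ and $n \geq 0$ define
\[
E^-_x(n) = (df^n_{f^{-n} x})^{-1}\big( E^-_{f^{-n}x} \big) \cap \big( \text{a fixed complement direction, e.g. pulling back $F \oplus$ low modes} \big),
\]
or, more robustly, work directly with the finite-codimensional subspaces $F_x(n) := (df^n_{f^{-n}x}) F(f^{-n}x)$ together with the $E_i$ of negative exponent; the key point is that each such approximant is a continuous function of $x$ on $\Gamma^+_{l_0}$ because it is built out of finitely many applications of $df$ (which is continuous) to subspaces that are continuous on the $K_n$'s (by the MET, item (c), and the paring-off at the end of Section 2.2). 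Actually the cleanest route is: $E^-_x = \bigcap_{n \geq 0} (df^n_x)^{-1}(\Bc_{f^nx})$ is not quite right since $df^n_x$ need not be surjective, so instead characterize $E^-_x$ dynamically — $v \in E^-_x$ iff $|df^n_x v|$ decays at rate $\leq \la^-$ — and approximate.

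The key steps, in order, would be: (1) For $x \in \Gamma^+_{l_0}$, observe that $E^-_x$ is the unique closed, $m$-codimensional, $df$-invariant (forward) subspace on which $|df^n_x|_{E^-_x}| \leq l_0 e^{n\la^-}$, complementary to $E^+_x$; by Lemma \ref{lem:charts1} and the definition of $\Gamma^+_{l_0}$ this subspace exists and the complementary projection $\pi^-_x$ has norm controlled in terms of $l_0$. (2) Build finite-dimensional-corank approximants $E^-_x(n)$ to $E^-_x$ whose Hausdorff (or $\Gap$) distance to $E^-_x$ is bounded by $C(l_0) \theta^n$ for some $\theta < 1$, uniformly over $\Gamma^+_{l_0}$ — this uses the one-step hyperbolicity $|df_x w|' \leq e^{\la^-}|w|'$ on $E^-$ versus $|df_x u|' \geq e^{\la^+}|u|'$ on $E^+$, exactly the gap that forces a graph over $E^-$ with Lipschitz constant shrinking like $e^{n(\la^- - \la^+)}$; this is morally the same computation as in Lemma \ref{lem:expFlatness}. (3) Show each approximant $x \mapsto E^-_x(n)$ is continuous on $\Gamma^+_{l_0}$: it is obtained from the continuous (on $K_N \supset \Gamma^+_{l_0}$ for suitable $N$, after the paring-off) Oseledets data $E_i, F$ by finitely many continuous operations (applying $df^{\pm j}$, taking spans, taking graphs), and continuity of subspaces under these operations in $d_H$ is standard (Lemma \ref{lem:symmetryCloseness} relates $d_H$ and $\Gap$, and $\Gap$ behaves well under bounded operators with bounded inverse on the relevant subspace — Proposition \ref{prop:detReg}'s hypotheses, or just elementary estimates). (4) Conclude: a uniform limit of continuous maps is continuous, so $x \mapsto E^-_x$ is continuous on $\Gamma^+_{l_0}$, hence on $\Gamma_{l_0} \subset \Gamma^+_{l_0}$ (note $\Gamma_{l_0}$ sits inside some $\Gamma^+_{l_0'}$ by the displayed inequality $|df^n_x w| \leq 2 l(x) e^{n\la^-}|w|$ at the end of Section 2.3).

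The main obstacle I expect is Step 2 — getting the \emph{uniform} exponential rate of convergence of the approximants, because one must track how the adapted norms $|\cdot|'_x$ at different points $f^n x$ relate back to the ambient norm $|\cdot|$ (in which $d_H$ is measured) with constants that do \emph{not} degrade along the orbit. The slow-variation estimate $l(f^{\pm}x) \leq e^{\d_2} l(x)$ only controls $l$ along a \emph{single} orbit, not uniformly across $\Gamma^+_{l_0}$, so the honest fix is precisely to work on $\Gamma^+_{l_0}$, where the bound $|df^n_x|_{E^-_x}| \leq l_0 e^{n\la^-}$ is genuinely uniform — this is the whole reason forward uniformity sets are introduced. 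A secondary technical annoyance is that $df^n_x$ is not invertible, so one cannot literally pull back subspaces; this is handled by working with forward images $df^j(F(f^{-j}x))$ and the finite-dimensional $E_i$'s, or equivalently by characterizing $E^-_x$ as a graph of a map from a fixed model complement, exactly as in the stable manifold construction. I would also double-check the measurability-to-continuity reduction: after "paring off a set of zero measure" the $E_i, F$ are continuous on each $K_n$, and $\Gamma^+_{l_0}$ (or at least $\Gamma^+_{l_0} \cap K_n$, whose union over $n$ is full measure in $\Gamma^+_{l_0}$) inherits this, which is all that is needed since the statement is about continuity on $\Gamma_{l_0}$ in the subspace topology.
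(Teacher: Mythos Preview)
Your approach has a genuine gap in Step (3). Every approximant you propose --- whether $(df^n_{f^{-n}x})^{-1}(E^-_{f^{-n}x})$, or $df^n_{f^{-n}x}(F(f^{-n}x))$, or anything built from ``the $E_i$'' --- requires Oseledets data at \emph{backward} iterates $f^{-j}x$. To show these approximants vary continuously in $x$, you need $y \mapsto E_i(y), F(y)$ to be continuous at $y = f^{-j}x$ as $x$ ranges over $\Gamma^+_{l_0}$. But membership in $\Gamma^+_{l_0}$ is a purely \emph{forward} condition; it says nothing about where $f^{-j}x$ sits in the $K_n$ hierarchy, and there is no single $K_N$ containing all of $\Gamma^+_{l_0}$ (let alone its backward orbit). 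Your closing remark that ``$\Gamma^+_{l_0} \cap K_n$ \ldots\ inherits this'' does not rescue the argument, because even for $x \in \Gamma^+_{l_0} \cap K_n$ the point $f^{-j}x$ need not lie in $K_n$. Indeed, the paper remarks explicitly (just after Lemma \ref{lem:stableStacks}) that the MET-continuity of $E^-$ on $K_{n_0}$ is \emph{insufficient} for the applications that follow, precisely because forward iterates may leave $K_{n_0}$; Lemma \ref{lem:contSplitting} is needed as an independent result for exactly this reason.

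The paper's proof bypasses approximants and backward iterates entirely. Given $x^n \to x$ in $\Gamma^+_{l_0}$, it argues by contradiction: if $\Gap(E^-_{x^n}, E^-_x) \not\to 0$, pick unit vectors $v^n \in E^-_{x^n}$ whose $E^+_x$-component (in the splitting at the \emph{limit point} $x$) is bounded below by some $c > 0$. Then estimate $|df^k_x v^n|$ two ways. From the splitting at $x$, the $E^+_x$-part grows at least like $e^{k(\la^+-\d_2)}$ while the $E^-_x$-part is at most $l_0 e^{k\la^-}\|\pi^-_x\|$. On the other hand, $|df^k_x v^n| \leq \|df^k_x - df^k_{x^n}\| + l_0 e^{k\la^-}$, using $v^n \in E^-_{x^n}$ and $x^n \in \Gamma^+_{l_0}$. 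Send $n \to \infty$ (so the first term vanishes by continuity of $df^k$), then take $k$ large to get a contradiction. This uses only: the definition of $\Gamma^+_{l_0}$, the adapted-norm growth on $E^+_x$ at the single point $x$, the continuity of $x \mapsto df^k_x$ for fixed $k$, and Lemma \ref{lem:symmetryCloseness} to pass from $\Gap$ to $d_H$. No Oseledets continuity, no backward iterates, no uniform-limit machinery.
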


Observe by Lemma \ref{lem:charts1} that $\Gamma_{ l_0} \subset \Gamma^+_{2 l_0}$ for any $l_0 > 1$, and so Lemma \ref{lem:contSplitting} implies the continuity of $x \mapsto E^-_x$ across the uniformity sets $\Gamma_{l_0}$ as well.

\begin{proof}[Proof of Lemma \ref{lem:contSplitting}]
Let $x^n \to x$ be a convergent sequence in $\Gamma_{l_0}^+$. To show $d_H(E^-_{x^n}, E^-_x) \to 0$, we will prove $\Gap(E^-_{x^n}, E^-_x) \to 0$ as $n \to \infty$, where $\Gap$ is as defined in Sect. \ref{subsec:bSpaceGeometryAC} (see Lemma \ref{lem:symmetryCloseness}). 
Assume the contrary. Then there exists a sequence of unit vectors $v^n \in E^-_{x^n}$ such that, writing
\[
v^n = w^{n, +} + w^{n, -}
\]
according to the splitting $\Bc = E^+_x \oplus E^-_x$, we have $|w^{n, +}| \geq c $ for some constant $c > 0$. 

We use the shorthand $x_k = f^k x, x_k^n = f^k x^n$. Then for arbitrary $k, n$,
\begin{align*}
(*) := |df^k_{x} v^n| & \geq |df^k_x w^{n, +}| - |df^k_x w^{n, -}| \\
& \ge \frac12 l(x)^{-1} e^{k(\la^+ - \d_2)} c - l_0 e^{k \la^-} \|\pi^-_x\|\ ,
%
\end{align*}
after carrying out the change of norms and using $x \in \Gamma^+_{l_0}$. On the other hand, since $x^n \in \Gamma^+_{l_0}$ and $v_n \in E^-_{ x^n}$,
\[
(*) = |df^k_x v^n| \leq |(df^k_x - df^k_{x^n}) v^n| + |df^k_{x^n} v^n| \leq \|df^k_x - df^k_{x^n}\| + l_0 e^{k \la^-}  \, .
\]
Taking the limit as $n \to \infty$, we have shown that for all $k$,
\[
(1 + \|\pi^-_x\|) l_0 e^{k \la^-} \geq \frac12 c l(x)^{-1} e^{k (\la^+-\d_2)} \, .
\]
For $k$ large enough, this is a contradiction. 
\end{proof}

Next we treat the continuity of backward graph transforms. Let $\mathcal G_x$ be as defined in Sect. \ref{subsec:localStrongStableMflds}.
It is easy to see that this transform can be extended to the set of functions
\begin{eqnarray*}
\mathcal W^{ss}_{\frac{1}{10}}(x) & := & \{h \in \tilde B^-_x(\d l(x)^{-1}) \to \tilde E^+_x  : 
(i) \Lip(h) < \frac{1}{10}\  \mbox{ and } \\
& & \qquad (ii) \ \exists \hat z, z \mbox{ with } |\hat z|'_{f^{-1}x}, |z|'_x < \frac{1}{10} \d l(x)^{-1} \mbox{ s.t. } \tilde f_{f^{-1}x} \hat z = z \in \graph h\}\ .
\end{eqnarray*}
The following notation will be used for `chart switching': Let $x, y \in \Gamma$ and let $\phi_y : \Dom(\phi_y) \to E^{+}_y$ be a Lipschitz map, where $\Dom(\phi_y) \subset E^-_y$. We write $\phi_y^{x} : \Dom(\phi_y^{x}) \to E^{+}_x$ for the map, if it exists, for which
\[
\exp_y \graph \phi_y = \exp_x \graph \phi_y^x \, .
\]

\begin{lem}\label{lem:contGraphTform} Let $x, y^n \in \Gamma_{l_0}$ be such that $y^n \to x$ as $n \to \infty$,
and fix arbitrary $k \in \mathbb Z^+$. Writing $x_k=f^k x$ and $y^n_k =f^ky^n$,
we let ${\bf 0}_{x_k}: E^-_{x_k} \to E^+_{x_k}$ and 
${\bf 0}_{y^n_k} : E^-_{y^n_k} \to E^+_{y^n_k}$ be the functions that are identically 
equal to zero. Then for all large enough $n$,
${\bf 0}^{x_k}_{y^n_k}: \tilde B^-_{x_k}(\d (l_0 e^{k \d_2})^{-1}) \to E^+_{x_k}$ are defined, as are the backward graph transforms
$$
\mathcal G^k_{x_k}({\bf 0}^{x_k}_{y^n_k}):=
\mathcal G_{x_1} \circ \dots \circ \mathcal G_{x_{k-1}} \circ \mathcal G_{x_k}
({\bf 0}^{x_k}_{y^n_k})\ .
$$
as mappings $\tilde B^-_{x}(\d l_0^{-1}) \to \tilde B^+_x(\d l_0^{-1})$; moreover, $\|\mathcal G^k_{x_k}({\bf 0}^{x_k}_{y^n_k}) - 
\mathcal G^k_{x_k}({\bf 0}_{x_k})\|_{C^0}' \to 0$ as $n \to \infty$, where $\|\cdot\|_{C^0}'$ is taken with respect to the adapted norm $|\cdot|'_x$.
\end{lem}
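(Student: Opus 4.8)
The plan is to argue in three stages: \textbf{(I)} for all large $n$ the chart--switched map ${\bf 0}^{x_k}_{y^n_k}$ is defined on the asserted ball, belongs to $\mathcal W^{ss}_{1/10}(x_k)$, and converges to ${\bf 0}_{x_k}$ in $C^1$; \textbf{(II)} the $k$--fold backward graph transform is then well defined along the orbit segment $x_k \to x_{k-1}\to\cdots\to x$ and has the asserted domain and range; and \textbf{(III)} the $C^0$--convergence follows by composing $k$ contractions. The point that keeps this short is that $x_1,\dots,x_k$ and the transforms $\mathcal G_{x_1},\dots,\mathcal G_{x_k}$ depend only on $x$, not on $y^n$, so the only input we need about variation of Oseledets data is the continuity of $x\mapsto E^-_x$ from Lemma~\ref{lem:contSplitting}; no continuity in the base point of the graph transform itself is required.

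For \textbf{(I)}, note that the chart maps $\exp_x$ are affine (translations), so $\exp_{x_k}^{-1}\big(\exp_{y^n_k}\graph {\bf 0}_{y^n_k}\big)$ is the affine subspace $(y^n_k - x_k) + E^-_{y^n_k}$. By continuity of $f^k$ we have $y^n_k - x_k\to 0$; and since $\Gamma_{l_0}\subset\Gamma^+_{2l_0}$ (the observation following Lemma~\ref{lem:contSplitting}) and $l$ varies slowly, both $f^k y^n$ and $x_k$ lie in $\Gamma^+_{2l_0 e^{k\d_2}}$, so Lemma~\ref{lem:contSplitting} gives $d_H(E^-_{y^n_k},E^-_{x_k})\to 0$. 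A routine Banach--space perturbation argument --- using $\|\pi^\pm_{x_k}\|'\le 1$ and $\pi^-_{x_k}|_{E^-_{x_k}}=\id$ --- then shows that $\pi^-_{x_k}|_{E^-_{y^n_k}}$ is invertible for large $n$ with $\|(\pi^-_{x_k}|_{E^-_{y^n_k}})^{-1}\|'\to 1$ and $\|\pi^+_{x_k}|_{E^-_{y^n_k}}\|'\to 0$, so the affine subspace above is the graph over all of $E^-_{x_k}$ of an affine map whose linear part has operator norm $\to 0$ and whose value at $0$ has norm $\to 0$; its restriction to $\tilde B^-_{x_k}(\d(l_0 e^{k\d_2})^{-1})$ is what we call ${\bf 0}^{x_k}_{y^n_k}$. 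In particular ${\bf 0}^{x_k}_{y^n_k}\to{\bf 0}_{x_k}$ in $C^1$, with $\Lip'({\bf 0}^{x_k}_{y^n_k})<\tfrac1{10}$ eventually; and, writing $\zeta^n_j := \exp_{x_j}^{-1}(f^j y^n) = f^j y^n - f^j x\to 0$, condition (ii) in the definition of $\mathcal W^{ss}_{1/10}(x_k)$ holds with anchor $(\hat z, z) = (\zeta^n_{k-1},\zeta^n_k)$, since $\tilde f_{x_{k-1}}(\zeta^n_{k-1}) = \zeta^n_k = \exp_{x_k}^{-1}(f^k y^n)\in\graph {\bf 0}^{x_k}_{y^n_k}$ and both have adapted norm $\to 0$.

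For \textbf{(II)}, the extended transform $\mathcal G_{x_j}$ is defined on $\mathcal W^{ss}_{1/10}(x_j)$ and returns a Lipschitz graph over $E^-_{x_{j-1}}$ with Lipschitz constant $\le\tfrac1{10}$, so to iterate we must check at each step that the output again lies in the relevant $\mathcal W^{ss}_{1/10}$; this is exactly what the anchor points provide. Writing $H^{(j)} := \mathcal G_{x_{j+1}}\circ\cdots\circ\mathcal G_{x_k}({\bf 0}^{x_k}_{y^n_k})$ and using the standard fact that the graph of a backward transform $\mathcal G_x h$ equals $\tilde f_x^{-1}(\graph h)$ intersected with the chart at $f^{-1}x$, a downward induction (base case $j=k$ from \textbf{(I)}) shows simultaneously that each $H^{(j)}$ is defined, that $\zeta^n_j$ lies on $\graph H^{(j)}$, and --- via the anchor $(\zeta^n_{j-1},\zeta^n_j)$ --- that $H^{(j)}\in\mathcal W^{ss}_{1/10}(x_j)$, for $n$ large. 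The domain/range bookkeeping (one--step hyperbolicity from Lemma~\ref{lem:charts1}(a) and slow variation of $l$) then gives, exactly as for $\mathcal G^k_{x_k}({\bf 0}_{x_k})$, that $\mathcal G^k_{x_k}({\bf 0}^{x_k}_{y^n_k}) = H^{(0)}$ is a map $\tilde B^-_x(\d l_0^{-1})\to\tilde B^+_x(\d l_0^{-1})$. Finally, for \textbf{(III)}: as in Lemma~\ref{lem:backwardsGT}, whose proof is unchanged, each $\mathcal G_{x_j}$ is Lipschitz --- in fact a contraction --- with respect to $\|\cdot\|_{C^0}'$, with a Lipschitz constant depending only on $x_j$ and hence independent of $n$; composing the fixed number $k$ of these and invoking ${\bf 0}^{x_k}_{y^n_k}\to{\bf 0}_{x_k}$ in $C^0$ from \textbf{(I)} yields $\|\mathcal G^k_{x_k}({\bf 0}^{x_k}_{y^n_k}) - \mathcal G^k_{x_k}({\bf 0}_{x_k})\|_{C^0}'\to 0$.

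The main obstacle is \textbf{(I)}--\textbf{(II)}: with no inner product available, the chart--switch perturbation must be carried out purely through the projections $\pi^\pm$ and the Hausdorff/gap metric, and one must then verify that the extended backward graph transform stays well defined along all $k$ steps. The anchor--point trick handles the latter cleanly, and \textbf{(III)} is then routine because only a fixed finite number $k$ of compositions occurs.
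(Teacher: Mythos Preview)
Your approach is essentially the paper's --- well-definedness of the chart-switched graphs and iterated transforms, then one-step $C^0$-Lipschitz continuity of $\mathcal G_{x_j}$ composed $k$ times --- but with the emphasis inverted: you spend most of your effort on stages \textbf{(I)}--\textbf{(II)}, which the paper dispatches in one sentence (``the well-definedness \ldots\ is clear''), while the paper's entire written argument is the one-step $C^0$-Lipschitz estimate
\[
\|\mathcal G_{x_1}(h_1) - \mathcal G_{x_1}(h_2)\|_{C^0}' \le 2e^{-\lambda^+}\|h_1 - h_2\|_{C^0}'
\]
for $h_1,h_2 \in \mathcal W^{ss}_{1/10}(x_1)$, carried out directly from the backward-expansion along $E^+$ and $\Lip'(h_2)\le \tfrac{1}{10}$. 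Your anchor-point bookkeeping for \textbf{(II)} is correct and more explicit than the paper's.

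Two small corrections to your \textbf{(III)}. First, Lemma~\ref{lem:backwardsGT} is stated for the \emph{weighted} norm $\|h\|_{x,ss}=\sup_v |h(v)|'_x/|v|'_x$ on $\mathcal W^{ss}_0(x)$, which requires $h(0)=0$ and so is not even defined on $\mathcal W^{ss}_{1/10}(x)$; the passage to the unweighted $C^0$ norm is not literally ``unchanged'' (the weighted-norm argument picks up an extra factor $\approx e^{\lambda^-}$ coming from $|w|'\lesssim e^{\lambda^-}|v|'$, which is absent in the $C^0$ case). Second, the parenthetical ``in fact a contraction'' is false in general: the $C^0$-Lipschitz constant is $2e^{-\lambda^+}$, and since $\lambda^+=\lambda^*+1/(2p)$ may be negative this can exceed $1$. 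Neither point breaks your proof --- a fixed $n$-independent Lipschitz bound composed $k$ times is all you need --- but the gloss hides what is in fact the only computation the paper actually writes out.
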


\begin{proof} As $E^-$ is continuous on $\Gamma_{l_0e^{k\d_2}}$, the well-definedness of
${\bf 0}^{x_k}_{y^n_k}$ and $\mathcal G^k_{x_k} {\bf 0}^{x_k}_{y^n_k}$ for large enough $n$ is clear, and the only statement that requires a proof is 
the last statement on
$C^0$ convergence. For this it suffices to prove the continuity of the backward graph
transform for one step. For definiteness, let us work with $\tilde \Gamma_{x_1}$.
We will show that for $h_1, h_2 \in 
\mathcal W^{ss}_{\frac{1}{10}}(x_1)$, we have $\|\mathcal G_{x_1}(h_1)
-\mathcal G_{x_1}(h_2)\|_{C^0}' \le C \|h_1-h_2\|_{C^0}'$ for $C = 2 e^{-\la^+}$.

Fix arbitrary $v \in \tilde B^-_x(\d l_0^{-1})$, and let 
$z_i = (\tilde \Gamma_{x_1}h_i)(v)+v, i=1,2$. We omit the subscripts in $\tilde f_x$,
$\pi^\pm_x$, $|\cdot|'_x$ etc. when they are obvious from context.
By Lemma \ref{lem:forwardGT}(b),(c), 
\begin{equation} \label{Gamma}
|\pi^-(\tilde f z_1-\tilde f z_2)|' \le \frac{1}{10} |\pi^+(\tilde f z_1-\tilde f z_2)|'
\qquad \mbox{and} \qquad 
|\pi^+(\tilde f z_1-\tilde f z_2)|' \ge (e^{\lambda^+} - \d) |z_1-z_2|'\ .
\end{equation}
Let $w = \pi^- \tilde f z_1$. Then 
\begin{eqnarray*}
|h_2(w)-h_1(w)|' & \ge & |\pi^+(\tilde f z_2 - \tilde f z_1)|' - |h_2(w)-\pi^+(\tilde f z_2)|'\\
& \ge & |\pi^+(\tilde f z_2 - \tilde f z_1)|' - 
\frac{1}{10} |\pi^-(\tilde f z_1-\tilde f z_2)|' \qquad \mbox{since} \ \Lip(h_2)<\frac{1}{10}\\
& \ge & \frac{99}{100} |\pi^+(\tilde f z_2 - \tilde f z_1)|'  \qquad \mbox{ by } (\ref{Gamma})\ .
\end{eqnarray*}
Using (\ref{Gamma}) again, we conclude from this that
\[
|z_1-z_2|' \le 2 e^{-\lambda^+} |h_2(w)-h_1(w)|' \le 2 e^{-\lambda^+} \|h_1-h_2\|_{C^0}\ . \qedhere
\]
\end{proof}

The next lemma defines what we will refer to as {\it a stack of strong stable leaves}. 
Below, we write $B^{\pm}_x(r) = \{v \in E^{\pm}_x : |v| \leq r\}$ (note the difference between $\tilde B$ and $B$). 

\begin{lem}\label{lem:stableStacks}
Let $l_0$ and $n_0 > 1$ be fixed, and fix $x_0 \in \Gamma_{l_0} \cap K_{n_0}$ ($K_n$ as in the end of Section 2.2). For $\e>0$ and $x \in \Gamma$, we let 
\[U(x, \e) := \Gamma_{l_0} \cap K_{n_0} \cap \{y:|x-y|<\e\}\ ,\] 
and let $\{h_x\}_{x \in \Gamma}$ be as in Theorem \ref{thm:stabManifold}. Then, for any $\d \leq \frac14 \d_1'$, there exists $\e_0 > 0$ sufficiently small so that the following hold:
\begin{itemize}
\item[(a)] For any $y \in U(x_0, \e_0)$, the map $h_y^{x_0}$ is defined on $B^-_{x_0}(\d l_0^{-3})$ with $\Lip (h_y^{x_0}|_{B^-_{x_0}(\d l_0^{-3})}) \leq \frac{1}{10}$. 
\item[(b)] The mapping $\Theta : U(x_0, \e_0) \to C^0(B^-_{x_0}(\d l_0^{-3}) , B^+_{x_0}(\d l_0^{-3}))$ defined by setting $\Theta(y) = h_y^{x_0}|_{B^-_{x_0}(\d l_0^{-3})}$ is continuous in the uniform norm.
\end{itemize}
\end{lem}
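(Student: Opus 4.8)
The plan is to prove parts (a) and (b) of Lemma~\ref{lem:stableStacks} in tandem, using the characterization of $\graph h_x$ from Theorem~\ref{thm:stabManifold}(d) together with the continuity results already established in Lemmas~\ref{lem:contSplitting} and~\ref{lem:contGraphTform}. The guiding idea is that the local strong stable manifold $W^{ss}_{{\rm loc},x} = \exp_x\graph h_x$ is, after a bounded number $k$ of backward graph transforms, well-approximated by a graph transform of the \emph{zero} section $\mathbf 0^{x_k}_{y^n_k}$ appearing in Lemma~\ref{lem:contGraphTform} --- and since the latter varies continuously with the base point, so does $h_y$ when expressed in the fixed chart at $x_0$.

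First I would establish part (a). Since $x_0 \in \Gamma_{l_0}$, the map $h_{x_0}$ is defined on $\tilde B^-_{x_0}(\d l_0^{-1})$ with $\Lip' h_{x_0} \le \tfrac{1}{10}$ by Theorem~\ref{thm:stabManifold}(b), and the change-of-norms estimate in Lemma~\ref{lem:charts1}(b) converts this into a statement about the $|\cdot|$-ball $B^-_{x_0}(\d l_0^{-2})$, say, with a Lipschitz constant still controlled (the factor of $l_0^{-1}$ versus $l_0^{-3}$ leaves plenty of room). For $y$ near $x_0$, the graph $\graph h_y \subset \Bc_y$ is close in the Hausdorff sense to $\graph h_{x_0}$: indeed, by Lemma~\ref{lem:contSplitting} the splitting $E^\pm_y$ is close to $E^\pm_{x_0}$ when $|y - x_0|$ is small within the forward uniformity set (and $\Gamma_{l_0} \subset \Gamma^+_{2l_0}$), and $\exp_y$ versus $\exp_{x_0}$ differ only by a small translation. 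Consequently the ``chart-switched'' graph $\exp_{x_0}\graph h_y^{x_0}$ is a graph over $E^-_{x_0}$ on a domain containing $B^-_{x_0}(\d l_0^{-3})$, with Lipschitz constant $\le \tfrac{1}{10}$ after shrinking $\e_0$; the only subtlety is checking that the domain does not shrink below $\d l_0^{-3}$, which follows from the finite codimension estimate in Lemma~\ref{lem:symmetryCloseness} controlling $\Gap(E^-_{x_0}, E^-_y)$, hence the ``tilt'' of the fibers, uniformly in $y \in U(x_0,\e_0)$.

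Next, for part (b), the strategy is to realize $\Theta(y) = h_y^{x_0}$ as (the chart-switch of) a finite composition of backward graph transforms of an explicit, continuously-varying input. Here is the key point: apply Theorem~\ref{thm:stabManifold}(d) $k$ steps forward along the orbit of $y$. The image $\tilde f^k_y(\graph h_y)$ sits inside the diminished chart at $y_k = f^k y$ and, because of the $(e^{\la^-}+\d)^n$ contraction, it is \emph{exponentially flat} there --- within $O((e^{\la^-}+\d)^k)$ of the zero section $\mathbf 0_{y_k}$ in the $C^0$ sense (and in fact $C^1$, using Theorem~\ref{thm:stabManifold}(b) on $\Lip' dh$, which is bounded on $\Gamma_{l_0 e^{k\d_2}}$). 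Now $h_y = \mathcal G^k_{y_k}(\tilde f^k_y\graph h_y)$, and $\mathcal G$ is a contraction in the $C^0$ norm (Lemma~\ref{lem:backwardsGT}, uniform constant $q<1$, or the explicit constant $2e^{-\la^+}$ from the proof of Lemma~\ref{lem:contGraphTform}). Therefore, in the fixed chart at $x_0$,
\[
\| h_y^{x_0} - \mathcal G^k_{x_{0,k}}(\mathbf 0^{x_{0,k}}_{y_k})^{x_0} \|'_{C^0}
\ \lesssim\ q^k \cdot O\big((e^{\la^-}+\d)^k\big) \ +\ (\text{chart-switch error between } x_{0,k}\text{ and }y_k),
\]
which can be made $< \eta/2$ for any prescribed $\eta$ by choosing $k$ large, uniformly in $y \in U(x_0,\e_0)$. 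With $k$ fixed, Lemma~\ref{lem:contGraphTform} gives $\|\mathcal G^k_{x_{0,k}}(\mathbf 0^{x_{0,k}}_{y^n_k}) - \mathcal G^k_{x_{0,k}}(\mathbf 0_{x_{0,k}})\|'_{C^0} \to 0$ as $y^n \to x_0$, so along any sequence $y^n \to x_0$ we get $\limsup_n \|\Theta(y^n) - \Theta(x_0)\|'_{C^0} \le \eta$; since $\eta$ was arbitrary, $\Theta$ is continuous at $x_0$, and by the same argument at every point of $U(x_0,\e_0)$.

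The main obstacle I anticipate is the careful bookkeeping of the two competing errors in the displayed estimate: the contraction gain $q^k$ from iterating $\mathcal G$ fights against the fact that the $C^1$ bounds on the graph transforms live on charts at $y_k$ whose size shrinks like $e^{\la_c k}$ and whose ``slow variation'' constants $l_0 e^{k\d_2}$ grow --- so one must verify that $\d_2$ was chosen small enough relative to $\la^+ - \la^-$ that the flatness estimate $(e^{\la^-}+\d)^k$ genuinely dominates, and that the chart-switching maps $(\cdot)^{x_0}$ between the (moving) chart at $y_k$ and the (fixed) chart at $x_{0,k}$ remain uniformly Lipschitz and introduce only an $o(1)$ error as $y \to x_0$. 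This is essentially the same interplay between graph-transform contraction, chart shrinkage, and adapted-norm distortion that underlies Lemma~\ref{lem:expFlatness} and Lemma~\ref{lem:forwardGT}, so the required estimates are all of a type already in play; the work is in assembling them cleanly rather than in any new idea.
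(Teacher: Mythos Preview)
Your overall strategy is exactly the paper's: approximate $h_y$ by a finite backward graph transform of a zero section, control the approximation error uniformly in $y$ via the contraction in Lemma~\ref{lem:backwardsGT}, and then invoke Lemma~\ref{lem:contGraphTform} for the residual dependence on $y$. The paper organizes this as the triangle inequality
\[
\|h^x_{y^n} - h^x_x\| \le \|h^x_{y^n} - \mathcal G^k_{x_k}(\mathbf 0^{x_k}_{y^n_k})\| + \|\mathcal G^k_{x_k}(\mathbf 0^{x_k}_{y^n_k}) - \mathcal G^k_{x_k}(\mathbf 0_{x_k})\| + \|\mathcal G^k_{x_k}(\mathbf 0_{x_k}) - h^x_x\|,
\]
using Lemma~\ref{lem:backwardsGT} for the outer terms and Lemma~\ref{lem:contGraphTform} for the middle one.

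There is, however, a gap in your execution of the first term. You assert that $\mathcal G$ is a contraction in the $C^0$ norm, citing Lemma~\ref{lem:backwardsGT} and the constant $2 e^{-\la^+}$ from the proof of Lemma~\ref{lem:contGraphTform}. But Lemma~\ref{lem:backwardsGT} gives a contraction only in the \emph{slope} norm $\|h\|_{x,ss} = \sup_v |h(v)|'_x/|v|'_x$ on $\mathcal W^{ss}_0(x)$; and $2 e^{-\la^+}$ is merely the $C^0$-Lipschitz constant of $\mathcal G$, which need not be $<1$ (nothing in the setup forces $\la^+ > \log 2$, and $\la^+$ may be negative). Relatedly, your input bound $O((e^{\la^-}+\d)^k)$ describes only the \emph{image} $\tilde f^k_y(\graph h_y)$, not $h_{y_k}$ on the full domain $\tilde B^-_{y_k}(\d l(y_k)^{-1})$ on which $\mathcal G^k$ acts; there one only has $\|h_{y_k}\|'_{C^0} = O(\d)$. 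So the displayed product $q^k \cdot O((e^{\la^-}+\d)^k)$ is not justified. The fix is short and is what the paper does implicitly: work in the $y$-chart system, where both $h_y$ and $\mathcal G^k_{y_k}(\mathbf 0_{y_k})$ lie in $\mathcal W^{ss}_0(y)$, so the slope-norm contraction of Lemma~\ref{lem:backwardsGT} yields $\|h_y - \mathcal G^k_{y_k}(\mathbf 0_{y_k})\|_{y,ss} \le q^k/10$, hence a $C^0$ bound of order $q^k$. Since graph transforms commute with chart-switching (both are $f^{-k}$ on sets), $\mathcal G^k_{x_k}(\mathbf 0^{x_k}_{y^n_k})$ is exactly the $x$-chart representation of $\mathcal G^k_{y^n_k}(\mathbf 0_{y^n_k})$, and the passage from the $y$-chart to the $x$-chart is uniformly controlled on $\Gamma_{l_0}$; your anticipated ``chart-switch error between $x_{0,k}$ and $y_k$'' is therefore a non-issue.
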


\begin{proof} (a) follows from the continuity of $x \mapsto E^+_x, E^-_x$ on $K_{n_0}$
and Theorem \ref{thm:stabManifold}; the extra copies of $l_0^{-1}$
come from norm changes and the reduction of domain size to keep the graphs ``flat".
For more detail see the proof of Lemma 5.5 in \cite{BYentropy}. 

For (b), we fix $x, y^n \in U(x_0, \e_0)$ with $y^n \to x$ as $n \to \infty$. To prove
the continuity of $\Theta$ at $x$, it suffices to
show that given any $\gamma>0$, when restricted to $B^u_x(2\d l_0^{-3})$
we have $\|h^x_{y^n} - h^x_x\| < \gamma$ for all large enough $n$ (here $\|\cdot\|$ refers to the uniform ($C^0$) norm taken using the standard norm $|\cdot|$ on $\Bc$).
For $k \in \mathbb Z^+$, write $x_k=f^kz$ and $y^n_k=f^ky_n$. Since $x_k, y^n_k \in 
\Gamma_{l_0 e^{k\d_2}}$, we have, by Lemma \ref{lem:contSplitting}, $E^-_{y^n_k} \to E^-_{x_k}$
as $n \to \infty$. (We could not have concluded this from the continuity of $E^-$ on 
$K_{n_0}$ alone because we may have $df_x(E^-_x) \subsetneq E^-_{fx}$.)
Using the notation in Lemma \ref{lem:contGraphTform}, we have that
$$
\|h^x_{y^n} - h^x_x\| \le \|h^x_{y^n} -\mathcal G^k_{x_k}({\bf 0}^{x_k}_{y^n_k})\|
+ \|\mathcal G^k_{x_k}({\bf 0}^{x_k}_{y^n_k}) - \mathcal G^k_{x_k}({\bf 0}_{x_k})\|
+ \|\mathcal G^k_{x_k}({\bf 0}_{x_k}) - h^x_x\|\ .
$$
From Lemma \ref{lem:backwardsGT} and the uniform equivalence of $|\cdot|$ and $|\cdot|'$ norms on
uniformity sets, we have that the first and third terms above are $<\gamma/3$ for $k$
sufficiently large. Fixing one such $k$, Lemma \ref{lem:contGraphTform} tells us that the middle
term is $< \gamma/3$ for $n$ large enough, completing the estimate.
\end{proof}

Letting $\bar U \subset U(x_0, \e_0)$ be any compact subset, we refer to 
\begin{align}\label{eq:ssStackDefn}
\Sc = \bigcup_{x \in \bar U} \exp_{x_0} \graph \Theta(x)
\end{align}
as a \emph{stack of strong stable leaves}. We remark that for $x,y \in \bar U$,
either $\Theta(x) = \Theta(y)$ or $\Theta (x) \cap \Theta (y) = \emptyset$. This
follows easily from Theorem \ref{thm:stabManifold}(d).

We finish with a lemma on the continuity of holonomy maps.

\begin{lem}\label{lem:continuousHolonomy} Let $\Sc = \bigcup_{x \in \bar U} \exp_{x_0} \graph \Theta(x)$ be 
as above. For $i=1,2$, we let 
\[
\Sigma^i = \exp_{x_0} \graph \sigma^i \, , \qquad \mbox{where} \qquad
\sigma^i : B^+_{x_0}(\d l_0^{-3}) 
\to B^-_{x_0}(\d l_0^{-3})
\]
has $ \Lip(\sigma^i) \le \frac{1}{10}$, and let 
 \[
\check \Sigma^i = \Sc \cap \Sigma^i\ .
\]
Then the holonomy map $p:\check \Sigma^1 \to \check \Sigma^2$ defined by
letting $p(z)$ be the unique point in \linebreak
$\exp_{x_0} (\graph \Theta (x)) \cap \Sigma^2$
for $z \in \exp_{x_0}(\graph \Theta(x)) \cap \Sigma^1$ is a homeomorphism. 
\end{lem}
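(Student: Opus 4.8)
The plan is to establish that $p$ is a continuous bijection whose inverse is also continuous, by leveraging the continuity of the stack map $\Theta$ (Lemma \ref{lem:stableStacks}) and the graph characterization of $W^{ss}$-leaves (Theorem \ref{thm:stabManifold}(d)). First I would verify that $p$ is well defined, i.e.\ that for each $z \in \check\Sigma^1$ there is exactly one point of intersection of $\exp_{x_0}(\graph\Theta(x)) \cap \Sigma^2$. For this, note that both $\graph\Theta(x)$ and $\graph\sigma^2$ are graphs over complementary subspaces $E^-_{x_0}$ and $E^+_{x_0}$ with Lipschitz constants $\le \frac{1}{10}$; hence their intersection in $\tilde B_{x_0}$ is a transversal intersection of two Lipschitz graphs, which by a standard contraction-mapping argument (solving $u = \sigma^2(\Theta(x)(u))$, a composition with Lipschitz constant $\le \frac{1}{100} < 1$, for $u \in E^-_{x_0}$, then back-substituting) has a unique solution provided the domains are large enough relative to the chart size. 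Shrinking $\e_0$ (equivalently $\delta$) if necessary ensures the unique intersection point lands inside the relevant balls. This simultaneously shows that $p$ is a bijection from $\check\Sigma^1$ onto $\check\Sigma^2$: its inverse is the holonomy map in the reverse direction, defined by the identical recipe with the roles of $\Sigma^1$ and $\Sigma^2$ swapped, and the disjointness remark following \eqref{eq:ssStackDefn} (two leaves $\Theta(x), \Theta(y)$ are either equal or disjoint) guarantees that distinct points of $\check\Sigma^1$, lying on distinct leaves, map to distinct points of $\check\Sigma^2$.

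Next I would prove continuity of $p$. Suppose $z^n \to z$ in $\check\Sigma^1$; write $z^n \in \exp_{x_0}\graph\Theta(x^n) \cap \Sigma^1$ and $z \in \exp_{x_0}\graph\Theta(x)\cap\Sigma^1$ with $x^n, x \in \bar U$. Since $\bar U$ is compact, after passing to a subsequence we may assume $x^n \to x' \in \bar U$; by Lemma \ref{lem:stableStacks}(b), $\Theta(x^n) \to \Theta(x')$ uniformly, so the leaf through $z^n$ converges (in the $C^0$ sense, as graphs) to $\exp_{x_0}\graph\Theta(x')$. On the other hand $z^n \to z$ forces $z \in \exp_{x_0}\graph\Theta(x')$; but $z$ also lies on $\exp_{x_0}\graph\Theta(x)$, and by the disjointness dichotomy the two leaves must coincide, $\Theta(x') = \Theta(x)$. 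Then $p(z^n) = \exp_{x_0}\graph\Theta(x^n)\cap\Sigma^2 \to \exp_{x_0}\graph\Theta(x')\cap\Sigma^2 = \exp_{x_0}\graph\Theta(x)\cap\Sigma^2 = p(z)$, where the convergence of the intersection points follows from the uniform convergence $\Theta(x^n)\to\Theta(x')$ together with the uniform transversality of the two families of Lipschitz graphs (the solution $u$ of the fixed-point equation depends continuously on the data $\Theta(x^n)$). Since every subsequence of $(p(z^n))$ has a further subsequence converging to $p(z)$, the full sequence converges, giving continuity of $p$.

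Finally, the same argument applied verbatim with $\Sigma^1$ and $\Sigma^2$ interchanged shows that $p^{-1}: \check\Sigma^2 \to \check\Sigma^1$ is continuous, so $p$ is a homeomorphism.

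\textbf{Main obstacle.} I expect the well-definedness step—establishing the unique transversal intersection of $\graph\Theta(x)$ with $\graph\sigma^i$ inside a chart whose size is controlled—to be the delicate point. The issue is bookkeeping of domains: $\Theta(x)$ is defined on $B^-_{x_0}(\delta l_0^{-3})$ and $\sigma^i$ on $B^+_{x_0}(\delta l_0^{-3})$, and one must check that the fixed point of the composed contraction actually lies in the interior of both balls, uniformly over $x \in \bar U$; this is where one must track the $l_0^{-3}$ factors and possibly shrink $\delta$ or $\e_0$ once more. The continuity of the intersection point in the data $\Theta(x)$, while morally a routine consequence of uniform contraction in the fixed-point equation, also needs this domain control to be stated cleanly. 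Everything else—continuity of $\Theta$, the graph characterization yielding disjointness—is already supplied by the preceding lemmas.
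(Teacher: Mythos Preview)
Your argument is correct, but it takes a somewhat different route from the paper's. The paper factors the holonomy through the quotient $\bar U/\!\sim$, where $x\sim y$ iff $\Theta(x)=\Theta(y)$: it defines $\psi_i:\bar U\to\check\Sigma^i$ by $\psi_i(x)=\exp_{x_0}\graph\Theta(x)\cap\Sigma^i$, proves $\psi_i$ is continuous via a direct Lipschitz estimate (if $A^\pm=|\pi^\pm_{x_0}(\psi_1(y^n)-\psi_1(x))|$, then $A^-\le\tfrac1{10}A^+$ from $\Lip(\sigma^1)$ and $|\Theta(y^n)w-\Theta(x)w|\ge A^+-\tfrac1{10}A^-$ from $\Lip(\Theta(y^n))$), and then observes that the induced maps $\check\psi_i:(\bar U/\!\sim)\to\check\Sigma^i$ are continuous bijections from a compact space to a Hausdorff space, hence homeomorphisms; thus $p=\check\psi_2\circ\check\psi_1^{-1}$ is a homeomorphism in one stroke. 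Your approach instead argues sequential continuity of $p$ and $p^{-1}$ separately, using compactness of $\bar U$ only to extract a convergent subsequence of base points and the leaf-disjointness dichotomy to pin down the limiting leaf. Both are valid; the paper's quotient trick is shorter and avoids the subsequence-plus-dichotomy step, while your version is more explicit about where each hypothesis enters (and your well-definedness discussion via the contracting composition $u\mapsto\sigma^2(\Theta(x)(u))$ makes explicit something the paper leaves tacit). The domain bookkeeping you flag as the main obstacle is real but benign: with both Lipschitz constants $\le\tfrac1{10}$ and the images of $\Theta(x),\sigma^i$ already constrained to the relevant balls, the fixed point stays in the interior without further shrinking.
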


\begin{proof} Define $\psi_1: \bar U \to \check \Sigma^1$ by $\{\psi_1(x)\}=
\exp_{x_0} \graph \Theta(x) \cap \Sigma^1$. First we observe that $\psi_1$ is continuous by Lemma \ref{lem:stableStacks}(b).
In more detail, if $x, y^n \in \bar U$ and $y^n \to x$,
$$
|\psi_1(y^n)-\psi_1(x)| \le |\pi_{x_0}^+(\psi_1(y^n) -\psi_1(x))| + 
|\pi_{x_0}^-(\psi_1(y^n) - \psi_1(x))| := A^+ + A^-\ , 
$$ 
with $A^- \le \frac{1}{10} A^+$ by the condition on $\Lip(\sigma^1)$.
Letting $w = \pi_{x_0}^-\psi_1(x)$, we also have, by the condition 
on $\Lip(\Theta(y^n))$ in Lemma \ref{lem:stableStacks}(a),
$$
|\Theta(y^n)w - \Theta(x)w| \ge A^+ - \frac{1}{10} A^-\ .
$$
Thus $|\Theta(y^n)w - \Theta(x)w| \to 0$ as $n \to \infty$ implies
$|\psi_1(y^n)-\psi_1(x)| \to 0$.
Introducing the equivalence relation $\sim$ on $\bar U$ where
$x \sim y$ iff $\Theta(x)=\Theta(y)$, $\psi_1$ gives rise to a continuous map
$\check \psi_1: (\bar U/\sim) \to \check \Sigma^1$, which is injective and therefore
 a homeomorphism by the compactness of $\bar U$. Defining $\check \psi_2: (\bar U/\sim) \to \check \Sigma^2$ analogously,
we have that $p=\check \psi_2 \circ  \check \psi_1^{-1}$ is a homeomorphism.
\end{proof}

\section{Local version of absolute continuity result}\label{sec:localAbsCty}

In Sect. \ref{subsec:preciseFormulationAC}, we formulate the precise statement of {\bf Theorem A} and give 
an outline of the proof. Details are given in Sects. \ref{subsec:holonomiesOmega} and \ref{subsec:cover}.

\subsection{Theorem A: precise formulation and outline of proof}\label{subsec:preciseFormulationAC}

{\bf Setting.} {
Let $(f, \mu)$ be as in Section 1, satisfying (H1)--(H3). We fix 
$\Gamma = \Gamma(\la^*; m, p)$ with $\la^*< 1/2p$ and $\mu(\Gamma)>0$.}
The setup consists of a stack $\Sc$ of strong stable manifolds and
a pair of transversals $\Sigma^1$ and $\Sigma^2$ to the leaves of $\Sc$. 
More precisely, we apply the constructions of Section \ref{subsec:adaptedNormsAC}. 
Fix $l_0 > 1$ 
and $n_0 \in \N$. Let $x_0 \in \Gamma_{l_0} \cap K_{n_0}$ ($K_n$ as in the end of Section 2.2). We fix
$\d \leq \frac14 \d_1'$ small enough for the results in Section 3 to apply. Let $\e_0 > 0$ is as in Lemma \ref{lem:stableStacks},
and let $\Sc$ be the stack of strong stable leaves defined as in (\ref{eq:ssStackDefn}) 
through points in a compact set $\bar U \subset U(x_0, \e_0) \subset 
\Gamma_{l_0} \cap K_{n_0}$. For the transversals,
we let $\Sigma^i = \exp_{x_0} (\graph \sigma_i)$, $i=1,2$, where $\sigma_i : B^{+}_{x_0}(2 \d l_0^{-3}) \to B^-_{x_0}(\frac12 \d l_0^{-4})$ are $C^{1 +\Lip}$ maps satisfying $\Lip (\sigma_i) \leq \frac{1}{40 l_0}$.
These conditions ensure that for all $x \in \bar U$, 
$g_0^i := \sigma_i^x|_{\tilde B^+_x(\d l_0^{-3})}$ satisfies the assumptions in 
Lemma \ref{lem:forwardGT} { (see Lemma \ref{lem:switchChart})}.
As in Lemma \ref{lem:continuousHolonomy}, we define $\check \Sigma^i := \Sigma^i \cap  \Sc$, and let $p: \check \Sigma^1 \to \check \Sigma^2$ be the holonomy map.

\bigskip \noindent
{\bf Theorem A} {\it In the setting above, assume that $\nu_{\Sigma^1}(\check \Sigma^1) > 0$. Then the holonomy map $p$ is absolutely continuous with respect to the induced volumes $\nu_{\Sigma^1}$ and $\nu_{\Sigma^2}$ restricted to $\check \Sigma^1$ and $ \check \Sigma^2$, respectively.
Moreoever, $p$ has uniformly bounded Jacobian, i.e., there exists a constant $C > 0$ with the property that for any Borel set $A \subset \check \Sigma^1$,
\[
C^{-1} \nu_{\Sigma^1}(A) \leq \nu_{\Sigma^2}\big(p(A)\big) \leq C \nu_{\Sigma^1}(A) \, .
\] 
}

The goal of Section 4 is to prove this result. Section 5 proves an explicit 
formula for the Radon-Nikodym derivative of $p$.

\begin{rmk} {\bf Theorem A} has been proved a number of times for 
diffeomorphisms of finite dimensional Riemannian manifolds \cite{pughShub, pesin, katokStrelcyn}. 
We remark on some of the more significant issues in passing from 
these settings to Banach space maps. The first is that  {\it a priori} 
there is no notion of volume on transversals in Banach spaces. 
There is, however, a
well defined measure class, namely that generated by Haar measure on
finite dimensional subspaces, and that is adequate for the definition of
absolute continuity for $W^{ss}$-foliations, but not for statements
 on Radon-Nikodym derivatives
of holonomy maps. A second issue is that in the proofs, one needs to compare
Jacobians of high iterates of the map, at different phase points and restricted
to different subspaces. This requires not only the introduction of volume elements on finite dimensional subspaces (which we have done in \cite{BYentropy}) but 
proofs of regularity of volume elements and determinants as subspaces
are varied; Proposition \ref{prop:detReg} is in this spirit. Finally,
as we will see, absolute continuity of the $W^{ss}$-foliation ultimately boils
down to one's ability to pass ``round balls", or sets with nice geometries,
between nearby, roughly parallel transversals (see, e.g., \cite{pughShub, katokStrelcyn, youngDiffSurv}). Banach-space geometry is not always nice; 
indeed in some Banach spaces, $x \mapsto |x|$ is not even differentiable. 
Finite dimensional techniques such as overcovering by round balls with controlled intersections have no obvious analogs in Banach spaces.
\end{rmk}

We have found that it is technically simpler to work with the following surrogate for
balls on embedded submanifolds.

\begin{defn}\label{defn:Bballs}
Let $W \subset \Bc$ be an embedded submanifold.  For $x \in W$ and $r > 0$ we define the $\Omega$-ball of radius $r$ in $W$ centered at $x$ to be
\[
\Omega_{W}(x, r) = \text{ the connected component of } W \cap \{y \in \Bc : |x - y| \leq r\} \text{ containing } x \, .
\]
\end{defn}

We are primarily interested in the case where $W$ is a finite dimensional embedded $C^1$ submanifold and $r > 0$ is very small.

\bigskip \noindent
{\bf Notation:} Below and throughout Section 4 we use the shorthand $\Sigma^i_n = f^n \Sigma^i, \  \check \Sigma^i_n = f^n \check \Sigma^i$, and write $p_n : \check \Sigma^1_n \to \check \Sigma^2_n$ for the conjugated holonomy $p_n := f^n \circ p \circ f^{-n}$. The symbols $\lesssim, \gtrsim$ denote $\leq, \geq$, respectively, up to a multiplicative constant independent of $n$ (but perhaps depending on $l_0$); the symbol $\approx$ means that both of $\lesssim$ and $\gtrsim$ hold.

\bigskip \noindent
{\bf Outline of proof:} It suffices to show  there exists a constant $C > 0$ such
that for every compact set $A \subset \check \Sigma^1$, we have
 $\nu_{\Sigma^2}(p(A)) \leq C \nu_{\Sigma^1}(A)$. This is because all
 bounded Borel sets can be approximated from the inside by compact sets,
and the other inequality can be obtained by reversing the roles of $\Sigma^1$
and $\Sigma^2$.
 Let $A$ be given, and let $\Oc \supset A$ be an open neighborhood for which $\nu_{\Sigma^1}(\Oc ) \leq 2 \nu_{\Sigma^1}(A)$. We will show that for some large $n$, there is a collection of open $\Omega$-balls $\{\Omega_1, \Omega_2, \cdots, \Omega_M\}$ of $\Sigma^1_n$ for which $\{f^{-n}\Omega_i\}$ has the following properties:
\begin{itemize}
\item[(a)] $A \subset \bigcup_i f^{-n} \Omega_i  \subset \Oc$;
\item[(b)] $\sum_i \nu_{\Sigma^1} \big( f^{-n} \Omega_i \big) \lesssim  \, \nu_{\Sigma^1} \big( \bigcup_i f^{-n} \Omega_i \big)$;
\item[(c)] $\nu_{\Sigma^2}(p(\check \Sigma^1 \cap f^{-n} \Omega_i)) \lesssim \nu_{\Sigma^1} \big( f^{-n} \Omega_i \big)$ for each $i$.
\end{itemize}
 From (a)--(c), it follows immediately that 
\begin{eqnarray*}
\nu_{\Sigma^2}(p(A)) & \le & \sum_i \nu_{\Sigma^2}(p(\check \Sigma^1 \cap f^{-n} \Omega_i))\\
& \lesssim  & \sum_i \nu_{\Sigma^1}( f^{-n} \Omega_i) \qquad
\mbox{by (c)}\\
& \lesssim & \nu_{\Sigma^1} \big( \bigcup_i f^{-n} \Omega_i \big) \qquad
\mbox{by (b)}\\
& \lesssim & \nu_{\Sigma^1}(\Oc) \le 2 \nu_{\Sigma^1}(A) \qquad \mbox{by (a) and
the choice of } \Oc\ ,
\end{eqnarray*}
giving the desired result.

To complete the proof, then, 
it suffices to produce $\{\Omega_1, \cdots, \Omega_M\}$ with
properties (a)--(c) above, and to be sure that the constants in ``$\lesssim$" are
independent of $A$. In the proof to follow, $\Omega_i$  will be chosen to be $\Omega$-balls (in the sense of Definition \ref{defn:Bballs}), and they will be of the form $\Omega_{\Sigma^1_n}(f^n y_i, e^{n\la_b})$ for suitable choices of $y_i \in \check \Sigma^1$. Here 
$\la_b<0$ is a new lengthscale satisfying  
$$\la^- < \la_b < \la_c < \la^+\ .
$$
We assume $\la_b$ is fixed and bounded away from $\la^-$ and $\la_c$ by small 
numbers to be specified. 

\subsection{Holonomies of ``large" $\Omega$-balls}\label{subsec:holonomiesOmega}

To prove {\bf Theorem A}, we need to show $\nu_{\Sigma^2}(p(A)) 
\approx \nu_{\Sigma^1}(A)$ for all Borel subsets $A \subset \check \Sigma^1$. 
We consider in this subsection a situation where $A$ is an $\Omega$-ball 
 the radius of which is much larger than the distance between the
 two transversals, and explain how that is relevant to the original problem.

\begin{lem}\label{lem:switchChart}
The following hold with uniform bounds for all $x \in \bar U$. For $i = 1,2$ let $g_0^i = \sigma_i^x|_{\tilde B^+_x(\d l_0^{-3})}$ (using the
notation just before Lemma \ref{lem:contGraphTform}). Then, $g_0^i$ has range contained in $\tilde B^-_x(\d l_0^{-3})$, and we have the estimates $\Lip'(g_0^i) \leq 1/10$ and $\Lip'(dg_0^i) \leq 5 l_0 \Lip(d \sigma^i)$. Here, $\Lip'$ refers to the adapted norm $|\cdot|_x'$ at $x$.
\end{lem}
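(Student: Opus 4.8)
The plan is to estimate the chart-switching map $\sigma_i \mapsto g_0^i = \sigma_i^x$ by controlling how the graph of $\sigma_i$ over $B^+_{x_0}(\cdot) \subset E^+_{x_0}$ transforms into a graph over $\tilde B^+_x(\cdot) \subset E^+_x$, and then transferring all norm estimates from $|\cdot|$ at $x_0$ to the adapted norm $|\cdot|'_x$ at $x$. The starting point is that $x \in \bar U \subset U(x_0,\e_0)$, so $x$ is within $\e_0$ of $x_0$ in $\Bc$; by Lemma \ref{lem:stableStacks}(a) (or rather by the construction preceding it) $\e_0$ is small enough that $d_H(E^\pm_x, E^\pm_{x_0})$ is as small as we like, and the projections $\pi^\pm_x$ are uniformly bounded (by something like $2$, since all relevant $l$-values are $\le l_0$). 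So switching charts amounts to writing the manifold $\exp_{x_0}\graph\sigma_i$ as a graph of a function $E^+_x \to E^-_x$ near $0$, which is possible precisely because $\Lip(\sigma_i)$ is small and the two splittings are close — this is the same implicit-function-theorem argument as in Lemma 5.5 of \cite{BYentropy}, and I would cite it rather than redo it.

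The key steps, in order: (1) \emph{Well-definedness and range.} Since $\Lip(\sigma_i) \le \frac{1}{40 l_0}$ and $d_H(E^\pm_x, E^\pm_{x_0}) \ll l_0^{-1}$, the map $v \mapsto \pi^+_x(v + \sigma_i^{x_0\to x}\text{-image})$ is a bi-Lipschitz homeomorphism onto its image, which contains $\tilde B^+_x(\d l_0^{-3})$; this defines $g_0^i$ on that ball. For the range: $|g_0^i(u)|'_x \le l_0 |g_0^i(u)|$ by Lemma \ref{lem:charts1}(b), and $|g_0^i(u)| \lesssim |\sigma_i(\cdot)| + (\text{error from chart switch}) \lesssim \frac12 \d l_0^{-4} + \e_0 \cdot(\text{stuff})$, which for $\e_0$ small is $\le \frac1{l_0}\cdot\d l_0^{-3}$, giving $|g_0^i(u)|'_x \le \d l_0^{-3}$. (2) \emph{Lipschitz estimate $\Lip'(g_0^i)\le \frac1{10}$.} The Euclidean (i.e.\ $|\cdot|$-) Lipschitz constant of $g_0^i$ is bounded by $\Lip(\sigma_i)$ amplified by a factor close to $1$ coming from the change of splitting (quantified by $d_H$), so it is $\le \frac{1}{40 l_0} \cdot 2 \le \frac{1}{20 l_0} \le \frac1{20}$; passing to $|\cdot|'_x$ costs at most a factor $2 l_0$ (the $\frac12|v|\le|v|'_x\le l_0|v|$ bounds applied to numerator and denominator), giving $\Lip'(g_0^i) \le \frac{1}{20 l_0}\cdot 2 l_0 = \frac1{10}$. (3) \emph{Estimate on $\Lip'(dg_0^i)$.} Differentiating the chart-switch identity, $(dg_0^i)_u$ is an explicit algebraic expression in $d(\sigma_i)$, the fixed linear maps $\exp$-derivatives, and the projections; its $u$-dependence inherits the Lipschitz constant of $d\sigma_i$ with a bounded multiplicative amplification. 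Keeping careful track of the norm conversions (one factor of $l_0$ from $|\cdot|'_x \le l_0|\cdot|$ on the target $E^+_x$, and up to two more from the source when measuring a Lipschitz constant in $|\cdot|'$) yields the stated bound $\Lip'(dg_0^i) \le 5 l_0 \Lip(d\sigma_i)$.

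The main obstacle I expect is step (3): bookkeeping the constants so that the amplification factor comes out $\le 5$ rather than some larger absolute constant. This requires being precise about (a) how much the chart-switch map distorts derivatives — controlled by $d_H(E^\pm_x,E^\pm_{x_0})$ and hence made arbitrarily close to $1$ by shrinking $\e_0$ — and (b) the exact powers of $l_0$ produced by going back and forth between $|\cdot|$ and $|\cdot|'_x$ via Lemma \ref{lem:charts1}(b), noting that a Lipschitz constant of a map $E^+_x \to E^-_x$ measured in $|\cdot|'$ picks up the lower bound $\frac12|v|\le|v|'_x$ on the source and the upper bound $|w|'_x\le l_0|w|$ on the target, i.e.\ a net factor of $2l_0$, while the $\Lip(d\sigma^i)$ on the right-hand side is deliberately left in the \emph{unprimed} norm. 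Everything else is a routine implicit-function-theorem argument carried out exactly as in \cite{BYentropy}; the only genuinely new point is that $\e_0$ may need to be shrunk (finitely, depending only on $l_0$) to absorb the chart-switching errors, which is harmless since $\e_0$ was already chosen after $l_0$ in the setup.
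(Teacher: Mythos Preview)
Your proposal is correct and matches the paper's approach: the paper's own ``proof'' of Lemma~\ref{lem:switchChart} consists entirely of the sentence ``follows from the considerations in Section~5.2 in \cite{BYentropy}, which we do not repeat here,'' and your sketch is precisely what that citation unpacks to --- the implicit-function chart-switch combined with the norm-comparison bounds $\frac12|v|\le|v|'_x\le l_0|v|$ from Lemma~\ref{lem:charts1}(b). One cosmetic point: in step~(2) the chart-switch error enters additively (as $O(d_H(E^\pm_x,E^\pm_{x_0}))$) rather than multiplicatively, but since you already note that $\e_0$ may be shrunk depending on $l_0$ to absorb it, the conclusion is unaffected.
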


The proof of Lemma \ref{lem:switchChart} follows from the considerations in Section 5.2 in \cite{BYentropy}, which we do not repeat here. Lemma \ref{lem:switchChart} permits us to apply forward graph transforms as in Lemma \ref{lem:forwardGT}, with $c_0 = l_0^3, c_k = e^{-k \la_c} c_0$, to obtain the graph transform sequence $\{g_k^i : \tilde B^+_{f^k x}(\d c_k^{-1}) \to \tilde B^-_{f^k x}(\d c_k^{-1})\}_{k \geq 0}$ in the charts system along the trajectory $\{f^n x\}_{n \geq 0}$ for any $x \in \bar U$. 

In what follows we will use the notation $\Sigma^i_{n,x}:= \exp_{f^n x} \graph g^i_n$, and will be comparing $\Omega$-balls in $\Sigma^i_{n,x}, i=1,2$ for some
large $n$. In addition to other quantities to be specified, it should be assumed throughout
that the choice of $n$ will depend implicitly on $\d, l_0$ 
and $\Lip(d \sigma^i)$ (but it must not depend on $x \in \bar U$).

\medskip
In the rest of Sect. \ref{subsec:holonomiesOmega}, we fix attention on an arbitrary $y^1 \in \check \Sigma^1$. We write $p(y^1) = y^2$ and $y^i_n = f^n y^i$, and let $x \in \bar U$ be the corresponding point for which $y^i \in W^{ss}_{{\rm loc}, x}$ for $i = 1,2$. We first
establish that sets of the form $\Omega_{\Sigma^i_n}(y^i_n, 10e^{n \la_b})$ lie well 
inside the domains of the charts system along $\{f^n x\}$.

\begin{lem}\label{lem:chartContainment}
For all $n$ large enough, we have

(i) $
\Omega_{\Sigma^i_n} (y^i_n, 10 e^{n \la_b}) \subset \Sigma^i_{n,x} \, 
$

(ii) $\diam (f^{-n} \Omega_{\Sigma^1_n}(y^1_n, e^{n \la_b})) \to 0$ as $n \to \infty$\ .
\end{lem}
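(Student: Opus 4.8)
The plan is to prove both parts using the one-step hyperbolicity of Lemma \ref{lem:charts1}(a) together with the weak-contraction estimate along transversals from Lemma \ref{lem:forwardGT}(c), transported back and forth between the genuine norm $|\cdot|$ and the adapted norm $|\cdot|'$ via Lemma \ref{lem:charts1}(b) and the slow-variation estimate \eqref{eq:slowVaryAC}. The key point is that an $\Omega$-ball $\Omega_{\Sigma^i_n}(y^i_n, 10 e^{n\la_b})$ is, by definition, a connected component, so its diameter in the genuine norm is at most $20 e^{n\la_b}$; the work is to see that this forces the corresponding object to sit inside $\tilde B_{f^n x}(\d c_n^{-1})$, which has $|\cdot|'$-radius $\d c_n^{-1} = \d l_0^{-3} e^{n \la_c}$, hence genuine-norm radius comparable to $\d l_0^{-3} e^{n\la_c}$ after dividing by the bound $l(f^n x) \lesssim l_0 e^{n\d_2}$ in \eqref{eq:slowVaryAC}. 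Since $\la_b < \la_c$ and $\d_2 \ll \la^+ - \la^-$ (so in particular $\la_b + \d_2 < \la_c$ after shrinking the gap appropriately), the ratio $e^{n\la_b} \cdot e^{n\d_2} / e^{n\la_c} \to 0$, which is exactly the room we need.

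For part (i), I would first observe that both $y^i_n$ lie on $\Sigma^i_{n,x} = \exp_{f^n x}\graph g^i_n$: indeed $y^i_n = f^n y^i$ with $y^i \in \check\Sigma^i \subset \Sigma^i = \exp_{x_0}\graph\sigma_i$, and $g^i_0 = \sigma_i^x|_{\tilde B^+_x(\d l_0^{-3})}$, so $y^i_0 \in \exp_x\graph g^i_0$ and the forward graph transform property (Lemma \ref{lem:forwardGT}(a)) propagates this. (One should check $y^i$ is in the chart domain at $x$ and that the $W^{ss}$-leaf through $y^i$ stays flat enough; this is the content of the standing assumptions on $\sigma_i$ and of point (2) in the discussion after Lemma \ref{lem:forwardGT}.) Next, $\Sigma^i_{n,x}$ is a graph over $\tilde B^+_{f^n x}(\d c_n^{-1})$ with $\Lip'(g^i_n) \leq 1/10$, so distances along it are comparable in the $|\cdot|'$-norm to distances in the $E^+$-component. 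Starting from $y^i_n$ and moving within $\Sigma^i_{n,x}$, to exit $\Sigma^i_{n,x}$ one must travel $|\cdot|'$-distance at least $\approx \d c_n^{-1} - |y^i_n|'$; since $|y^i_n|' \to 0$ (the $W^{ss}$-dynamics contracts $y^i_n$ toward $0$ at rate $\approx e^{n\la^-}$, much faster than $c_n^{-1}$ shrinks), this is $\gtrsim \d c_n^{-1}$, hence $\gtrsim \d l_0^{-4} e^{n(\la_c - \d_2)}$ in the genuine norm. As computed above this dominates $20 e^{n\la_b}$ for $n$ large. Therefore the connected component of $\Sigma^i_n \cap \{|y - y^i_n| \leq 10 e^{n\la_b}\}$ cannot reach the boundary of $\Sigma^i_{n,x}$, so it is contained in $\Sigma^i_{n,x}$.

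For part (ii), I would take $y^1, z \in f^{-n}\Omega_{\Sigma^1_n}(y^1_n, e^{n\la_b})$, so $y^1_n, z_n \in \Sigma^1_{n,x}$ with $|y^1_n - z_n| \leq 2 e^{n\la_b}$, hence $|y^1_n - z_n|' \leq l(f^n x)|y^1_n - z_n| \lesssim l_0 e^{n\d_2} \cdot e^{n\la_b}$. By Lemma \ref{lem:forwardGT}(c), writing the $E^+$-components $u^1_n, u_n$ of $y^1_n, z_n$ and their preimages $u^1_0, u_0$, one gets $|u^1_0 - u_0|' \leq (e^{\la^+} - \d)^{-n}|u^1_n - u_n|'$; since the graphs are $(1/10)$-Lipschitz this controls $|y^1_0 - z_0|' \lesssim (e^{\la^+}-\d)^{-n} l_0 e^{n(\la_b + \d_2)}$, and converting back to the genuine norm via Lemma \ref{lem:charts1}(b) costs only a factor $2$. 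Thus $\diam(f^{-n}\Omega) \lesssim \big((e^{\la^+}-\d) e^{-\d_2} e^{\la_b}\big)^{-n}$ ... wait, more carefully: $|y^1_0 - z_0| \lesssim e^{-n\la^+} e^{n\la_b} e^{n\d_2}$, and since $\la_b < \la_c < \la^+$ and $\d_2$ is small, the exponent $\la_b + \d_2 - \la^+ < 0$, so this tends to $0$ exponentially. The main obstacle, and where care is genuinely needed, is the bookkeeping of the constants: making sure the estimate $|y^i_n|' \to 0$ used in (i) is legitimate (it relies on $y^i$ being in the local strong stable manifold $W^{ss}_{{\rm loc},x}$ and on Theorem \ref{thm:stabManifold}(d), so $|y^i_n|' \leq (e^{\la^-}+\d)^n |y^i_0|'$), and ensuring all the slow-variation factors $e^{n\d_2}$ are absorbed by the spectral gaps $\la_c - \la_b$ and $\la^+ - \la_b$, which is exactly why we insisted $\d_2 \ll \la^+ - \la^-$ and why $\la_b$ was chosen bounded away from $\la_c$.
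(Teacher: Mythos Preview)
Your proof is correct and follows essentially the same approach as the paper's. For (i), the paper estimates $|\hat y^i - f^n x|'_{f^n x}$ directly via the triangle inequality through $y^i_n$, using $l(f^n x) \le l_0 e^{n\d_2}$ for the first term and the $W^{ss}$-contraction $(e^{\la^-}+\d)^n$ for the second, then compares to $\d c_n^{-1}$; your phrasing in terms of the distance from $y^i_n$ to the boundary of $\Sigma^i_{n,x}$ is the same computation rearranged. For (ii), both you and the paper invoke part (i) to ensure the pulled-back points stay in $\Sigma^1_{k,x}$ for all intermediate $0\le k\le n$, and then apply the weak-contraction estimate of Lemma~\ref{lem:forwardGT}(c) to obtain the bound $\diam \lesssim e^{n(\la_b + \d_2 - \la^+)} \to 0$.
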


\begin{proof}
(i) Let $\hat y^i \in \Omega_{\Sigma^i_n}(y^i_n, 10 e^{n \la_b})$. We estimate:
\begin{align*}
|\hat y^i - f^n x|_{f^n x}' &\leq |\hat y^i - y^i_n|_{f^n x}' + |y^i_n - f^n x|_{f^n x}' \leq l(f^n x) |\hat y^i - y^i_n| + (e^{n \la^-} + \d)^n |y^i - x|_x' \\
& \leq 10 l_0 e^{n (\d_2 + \la_b)} + (e^{n \la^-} + \d)^n \cdot \d c_0^{-1} \, ,
\end{align*}
which is $\leq \frac12 \d c_n^{-1}$ assuming $e^{\la^-} + \d < e^{\la_b}$ and 
$\d_2 + \la_b < \la_c$. 

As for (ii), notice that for $k=1,2,\dots, n$, $f^{-k}\Omega_{\Sigma^1_n}
(y^1_n, 10e^{n \la_b}) \subset \Sigma^i_{n-k,x}$, 
so that \\
$\diam(f^{-n}\Omega_{\Sigma^1_n}(y^1_n, 10e^{n \la_b})) \sim e^{n \la_b} (e^{-\la^+}+\d)^n$,
which tends to $0$ as $n \to \infty$.
\end{proof}

\begin{lem}\label{lem:conjHolonomyContainment}
For any $\e > 0$, there exists $n$ sufficiently large (depending on $\e$)
for which we have
\[
p_n \big( \check \Sigma^1_n \cap \Omega_{\Sigma^1_n}(y^1_n, e^{n \la_b}) \big) \subset \Omega_{\Sigma^2_n}(y^2_n, (1+\epsilon) e^{n \la_b})
\]
\end{lem}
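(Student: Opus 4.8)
The plan is to move into the Lyapunov chart at $f^n x$ and to exhibit, inside the connected component $\Omega_{\Sigma^2_n}(y^2_n,(1+\e)e^{n\la_b})$, a single ``graph piece'' of $\Sigma^2_{n,x}$ that contains $p_n(z)$ for every $z \in \check\Sigma^1_n \cap \Omega_{\Sigma^1_n}(y^1_n,e^{n\la_b})$; since $z$ will be arbitrary, this yields the desired inclusion without having to argue that $\check\Sigma^1_n \cap \Omega_{\Sigma^1_n}(y^1_n,e^{n\la_b})$ is connected. Fix such a $z$, set $z' = f^{-n}z \in \check\Sigma^1$, let $x_{z'} \in \bar U$ be the point with $z', p(z') \in W^{ss}_{{\rm loc}, x_{z'}}$, so that $p_n(z) = f^n p(z')$; recall also $y^1, y^2 \in W^{ss}_{{\rm loc}, x}$.

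First I would prove the metric estimate $|p_n(z) - y^2_n| \le e^{n\la_b} + C(e^{\la^-}+\d)^n$ with $C = C(\d, l_0)$, via $|p_n(z) - y^2_n| \le |p_n(z) - z| + |z - y^1_n| + |y^1_n - y^2_n|$. The middle term is $\le e^{n\la_b}$ directly from the definition of $\Omega_{\Sigma^1_n}(y^1_n, e^{n\la_b})$. For the first term, $z'$ and $p(z')$ lie on the common leaf $W^{ss}_{{\rm loc}, x_{z'}}$, whose diameter is $O(\d)$; iterating the leafwise contraction of Theorem~\ref{thm:stabManifold}(c) --- legitimate because Theorem~\ref{thm:stabManifold}(d) keeps all iterates inside the chart domains --- and converting between $|\cdot|'$ and $|\cdot|$ via Lemma~\ref{lem:charts1}(b) gives $|p_n(z) - z| \lesssim (e^{\la^-}+\d)^n$; the third term is handled identically with $z' = y^1$. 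Since $\la^- < \la_b$ and $\d$ is small, $(e^{\la^-}+\d)^n = o(e^{n\la_b})$, so for $n$ large $|p_n(z) - y^2_n| \le (1 + \tfrac{\e}{2}) e^{n\la_b}$.

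Next I would promote this to the component statement. By Lemma~\ref{lem:expFlatness}, combined with Lemma~\ref{lem:charts1}(b) and the standing hypothesis $\d_2 \ll \la^+ - \la^-$ (which lets the flattening rate $e^{n(\la^- - \la^+ + \d)}$ absorb the norm-change factor $l(f^n x) \le l_0 e^{n\d_2}$), the graph map $g_n^2$ satisfies $\Lip(g_n^2) \le \tilde\eta_n$ in the \emph{ambient} norm, with $\tilde\eta_n \to 0$ exponentially. Let $G$ be the piece of $\Sigma^2_{n,x} = \exp_{f^n x}\graph g_n^2$ lying over the ambient-norm ball in $E^+_{f^n x}$ of radius $(1+\e)e^{n\la_b}/(1+\tilde\eta_n)$ about $\pi^+_{f^n x}\exp_{f^n x}^{-1}y^2_n$. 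Then $G$ is connected and contains $y^2_n$; the slope bound on $g_n^2$ forces $G \subset \overline B(y^2_n, (1+\e)e^{n\la_b})$, and by the containment arguments of Lemma~\ref{lem:chartContainment} this ball sits well inside the chart at $f^n x$, so $G \subset \Sigma^2_n$ and hence $G \subset \Omega_{\Sigma^2_n}(y^2_n, (1+\e)e^{n\la_b})$. Finally $p_n(z) \in G$: for $n$ large the sets $f^{-n}\Omega_{\Sigma^1_n}(y^1_n, e^{n\la_b})$ shrink (Lemma~\ref{lem:chartContainment}(ii)) so $z', p(z')$ are uniformly close to $y^1, y^2$ and the chart switches of Lemma~\ref{lem:switchChart} apply, placing $p_n(z)$ in $\Sigma^2_{n,x}$; writing $p_n(z)$ and $y^2_n$ as graph points of $g_n^2$ over $u_*, u_0 \in E^+_{f^n x}$, the slope bound gives $|u_* - u_0| \le |p_n(z) - y^2_n|/(1 - \tilde\eta_n)$, which by the metric estimate is $\le (1+\e)e^{n\la_b}/(1+\tilde\eta_n)$ once $n$ is large enough in terms of $\e$. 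This puts $p_n(z)$ in $G$, and the proof is complete.

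I expect the main obstacle to be exactly this interplay of the two norms: contraction rates are clean in the adapted norm $|\cdot|'$, but the $\Omega$-balls are measured in the ambient norm, and at time $n$ the conversion costs a factor $l(f^n x) \le l_0 e^{n\d_2}$ that cannot be recovered. The resolution is that the two ``vertical'' quantities --- the holonomy displacement $|p_n(z) - z|$ and the transversal separation $|y^1_n - y^2_n|$ --- decay at the fast rate $(e^{\la^-}+\d)^n$, which swamps that factor, whereas the ``horizontal'' ($E^+$-direction) comparison must be carried out \emph{directly in the ambient norm}; this is legitimate precisely because the transversals flatten at a rate that also beats $e^{n\d_2}$, since $\d, \d_2 \ll \la^+ - \la^-$.
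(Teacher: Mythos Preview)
Your proof is correct and the metric estimate via the triangle inequality $|p_n(z)-y^2_n|\le|p_n(z)-z|+|z-y^1_n|+|y^1_n-y^2_n|$, together with the $W^{ss}$-contraction bound $|p_n(z)-z|\lesssim(e^{\la^-}+\d)^n$, is exactly what the paper does. The paper's own proof stops there: it simply verifies $|p_n(\hat y_n)-y^2_n|\le(1+\e)e^{n\la_b}$ and declares the containment proved, without addressing the connected-component clause in the definition of $\Omega_{\Sigma^2_n}$. Your second paragraph --- constructing the connected graph piece $G\subset\Sigma^2_{n,x}$ over an $E^+$-ball and using the ambient-norm flatness of $g^2_n$ to check $G\subset\overline B(y^2_n,(1+\e)e^{n\la_b})$ and $p_n(z)\in G$ --- fills a genuine detail that the paper leaves implicit. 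So you have not taken a different route; you have taken the paper's route and been more careful about one point the authors regarded as routine.
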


\begin{proof}
Let $\hat y_n \in \check \Sigma^1_n$, and let $\hat x \in \bar U$ be such that $f^{-n} \hat y_n \in W^{ss}_{{\rm loc}, \hat x}$. Then
\begin{align}\label{eq:pnNearIdentity}
|\hat y_n - p_n (\hat y_n)| \leq 2 |\hat y_n - p_n(\hat y_n)|_{f^n \hat x}' \leq 2 (e^{\la^-} + \d)^n \cdot \d c_0^{-1} \, .
\end{align}
If, additionally, $\hat y_n \in \Omega_{\Sigma^1_n}(y^1_n, e^{n \la_b})$, then
\[
|p_n(\hat y_n) - y^2_n| \leq |p_n(\hat y_n) - \hat y_n| + |\hat y_n - y^1_n| + |y^1_n - y^2_n| \leq e^{n \la_b} + 4 \d c_0^{-1} (e^{\la^-} + \d)^n \leq (1 + \e) e^{n\la_b}
\]
proving the containment. 
\end{proof}

We consider next the relation between $\Omega$-balls in $\Sigma^i_{n,x}$ 
and in $E^+_{f^nx}$. Define $\Psi^i = \Psi^i_{n,x} : \Sigma^i_{n,x} \to 
E^+_{f^nx}$ by $\Psi^i:= \pi^+_{f^n x} \circ \exp_{f^n x}^{-1}$.

\begin{lem}\label{cor:expFlat} Fix $\e > 0$. Then for all $n$ sufficiently large (depending on 
$\e$), we have
\begin{itemize}
\item[(i)] $\frac{1}{1 + \e}  |y - y'| \leq |\Psi^i(y) - \Psi^i(y')| \leq (1 + \e) |y - y'|$ \, for all $y, y' \in \Sigma^i_{n,x}$,
\item[(ii)] 
$\frac{1}{1 + \e} \leq  \det (d \Psi^i) \leq 1 + \e \, .
$
\end{itemize}
\end{lem}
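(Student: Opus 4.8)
The plan is to show that the map $\Psi^i = \pi^+_{f^n x} \circ \exp_{f^n x}^{-1}$, restricted to the graph $\Sigma^i_{n,x} = \exp_{f^n x}\graph g^i_n$, is a near-isometry with near-unit Jacobian once $n$ is large, and the whole point is that this is precisely the content of Lemma \ref{lem:expFlatness}: the transversals $\Sigma^i_{n,x}$ become exponentially flat as $n \to \infty$. First I would unwind the composition: after applying $\exp_{f^n x}^{-1}$, the manifold $\Sigma^i_{n,x}$ becomes $\graph g^i_n \subset \tilde B^+_{f^n x}(\d c_n^{-1}) \oplus E^-_{f^n x}$, and $\Psi^i$ becomes simply the restriction to this graph of the linear projection $\pi^+_{f^n x}$ onto $E^+_{f^n x}$ along $E^-_{f^n x}$. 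Parametrizing the graph by $u \mapsto u + g^i_n(u)$ with $u \in \tilde B^+_{f^n x}(\d c_n^{-1})$, the map $\Psi^i$ in these coordinates is just $u \mapsto u$, the identity on a $k$-dimensional domain; so $\Psi^i$ is a $C^1$ diffeomorphism onto its image with an explicit inverse $u \mapsto u + g^i_n(u)$.

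The estimate (i) then follows from the Lipschitz control on $g^i_n$. For $y = \exp_{f^n x}(u + g^i_n(u))$ and $y' = \exp_{f^n x}(u' + g^i_n(u'))$, we have $\Psi^i(y) - \Psi^i(y') = u - u'$, while $\exp_{f^n x}^{-1}(y) - \exp_{f^n x}^{-1}(y') = (u - u') + (g^i_n(u) - g^i_n(u'))$; since $\exp_{f^n x}$ is a linear isometry onto its image (it is the chart identification, hence norm-preserving in the adapted norm, which is in turn within a bounded factor of $|\cdot|$), and since $\sup_u |(d g^i_n)_u|'_{f^n x} \to 0$ exponentially by Lemma \ref{lem:expFlatness}, the correction term $|g^i_n(u) - g^i_n(u')|$ is at most (that small quantity) times $|u - u'|$. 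Converting between $|\cdot|$ and $|\cdot|'_{f^n x}$ costs a factor controlled by $l(f^n x) \le l_0 e^{n\d_2}$, but $\d_2$ is chosen $\ll \la^+ - \la^-$, so the flatness bound $e^{n(\la^- - \la^+ + \d)}$ from \eqref{eq:flatGraph} still beats it; choosing $n$ large makes the total distortion $< \e$. This gives (i) for all $y, y'$ in the chart, with $n$ depending only on $\e, l_0, \la^{\pm}$ and $\Lip'(d\sigma^i)$ as required (uniformly in $x \in \bar U$).

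For (ii), I would apply Proposition \ref{prop:detReg}: at each point $y \in \Sigma^i_{n,x}$, $d\Psi^i$ is the restriction of $\pi^+_{f^n x}$ to the tangent space $T_y \Sigma^i_{n,x} = \{v + (dg^i_n)_u v : v \in E^+_{f^n x}\}$ (again transporting via $\exp$). As $n \to \infty$ this tangent space converges in $d_H$ to $E^+_{f^n x}$ uniformly (this is exactly exponential flatness), and $\pi^+_{f^n x}$ restricted to $E^+_{f^n x}$ is the identity with determinant $1$. Since the operator norms $|\pi^+_{f^n x}|$ on the uniformity sets are bounded (they grow subexponentially, cf. the MET, but more simply are bounded on each $\Gamma_{l_0}$-type set after the adapted-norm construction), Proposition \ref{prop:detReg} with $A_1 = \pi^+_{f^n x}|_{T_y\Sigma^i_{n,x}}$, $A_2 = \pi^+_{f^n x}|_{E^+_{f^n x}} = \id$, $E_1 = T_y \Sigma^i_{n,x}$, $E_2 = E^+_{f^n x}$ gives $|\log \det(d\Psi^i)_y| \le L \, d_H(T_y\Sigma^i_{n,x}, E^+_{f^n x}) \le L \cdot C e^{n(\la^- - \la^+ + \d_2 + \d)} < \log(1+\e)$ for $n$ large. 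The main obstacle — and the reason this lemma is not entirely trivial — is bookkeeping the interaction between the three competing exponential rates (the flatness rate $\la^- - \la^+ < 0$, the adapted-norm distortion rate $\d_2 > 0$, and the chart-shrinkage rate $\la_c$) and making sure every constant is uniform over $x \in \bar U$; this is handled by the standing convention $\d_2 \ll \la^+ - \la^-$ together with the uniform bounds in Lemma \ref{lem:expFlatness}.
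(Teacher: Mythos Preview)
Your argument for (i) is correct and essentially identical to the paper's: write $y = f^n x + \Psi^i(y) + g_n^i \circ \Psi^i(y)$, so $|y - y'| \leq (1 + \Lip g_n^i)\, |\Psi^i(y) - \Psi^i(y')|$, and the flatness estimate from Lemma \ref{lem:expFlatness} (after the norm conversion you correctly flagged) does the rest. One minor wording slip: $\exp_{f^n x}$ is an isometry for the \emph{standard} norm $|\cdot|$ (it is translation), not for the adapted norm; but this does not affect your argument.

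For (ii) your route via Proposition \ref{prop:detReg} works, but it differs from the paper and carries one extra cost you slightly underplayed. Your claim that $|\pi^+_{f^n x}|$ is ``bounded on each $\Gamma_{l_0}$-type set'' is not quite right here, since $f^n x$ need not lie in $\Gamma_{l_0}$; rather $|\pi^+_{f^n x}| \lesssim l_0 e^{n\d_2}$, so the parameter $M$ in Proposition \ref{prop:detReg} grows with $n$, and hence $L \sim M^{10 m^+}$ by Remark \ref{rmk:constantsDependence}. The flatness rate still dominates provided $\d_2$ is taken small relative to $(\la^+ - \la^-)/m^+$, which the standing conventions allow --- so the argument closes, but only after tracking one more exponential race than your write-up suggests.

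The paper's proof of (ii) is more elementary and sidesteps this entirely: since $\Psi^i = (\exp_{f^n x} \circ (\Id + g_n^i))^{-1}$, one applies the direct bound
\[
(1 - |V|)^{\dim E} \leq \det(\Id + V \mid E) \leq (1 + |V|)^{\dim E}
\]
with $V = (dg^i_n)_u$ and $E = E^+_{f^n x}$. This needs only $|(dg^i_n)_u| \to 0$ in the standard norm, with no appeal to Proposition \ref{prop:detReg} and no competing growth of constants. Your approach buys nothing extra here; the paper's is simply the shorter path.
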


\begin{proof} For (i), observe that $y = f^n x + \Psi^i(y) + g_n^i \circ \Psi^i(y)$ and similarly for $y'$, so that
\[
|y - y'| \leq (1 + \Lip g_n^i) | \Psi^i(y) - \Psi^i(y')| \leq (1 + \e) | \Psi^i(y) - \Psi^i(y')| 
\]
on taking $n$ sufficiently large so that $|(d g_n^i)_u| \leq \e$ for $u \in \tilde B^+_{f^n x}(\d c_n^{-1})$ by Lemma \ref{lem:expFlatness} (note that we have passed from the adapted norm $|\cdot|'_{f^n x}$ in the conclusions of Lemma \ref{lem:expFlatness} to the standard norm $|\cdot|$); the lower bound works similarly.

For (ii), note that $\Psi^i = (\exp_{f^n x} \circ (\Id + g_n^i))^{-1}$. The desired estimate follows on applying Lemma \ref{lem:expFlatness} to make $|dg^i_n|$ sufficiently small and on applying the simple bound $(1 - |V|)^{\dim E} \leq \det(\Id + V | E) \leq (1 + |V|)^{\dim E}$ to $V = dg^i_n, E = E^+_{f^n x}$.
\end{proof}

\medskip
We summarize the results thus far {\it vis a vis} the Outline of proof 
in Sect. \ref{subsec:preciseFormulationAC}:
For given small $\e$ and $n$ large enough depending on $l_0, \Lip(d \sigma^i)$ and
$\e$, we have
shown that $\Omega^1 :=
\Omega_{\Sigma^1_n} (y^1_n, e^{n \la_b})$ has the properties
\begin{itemize}
\item[(i)] $f^{-n}(\Omega^1) \subset \Oc$ \qquad Lemma \ref{lem:switchChart}(ii)
\item[(ii)] $p_n(\Omega^1 \cap \check \Sigma_n^1) \subset \Omega_{\Sigma^2_n} 
(y^2_n, (1+\e) e^{n \la_b}):= \Omega^2$ \qquad Lemma \ref{lem:chartContainment}
\item[(iii)] $\Psi_n^1(\Omega^1) \supset \Omega_{E^+_{f^nx}}(\Psi^1_n y^1_n, (1+\e)^{-1} e^{n\la_b})$ 

\qquad and \ \ $\Psi_n^2(\Omega^2) \subset \Omega_{E^+_{f^nx}}(\Psi^2_n y^2_n, (1+\e)^2 e^{n\la_b})$ \qquad Lemma \ref{lem:conjHolonomyContainment}(i)
\end{itemize}
Notice that $E^+_{f^nx}$ is a linear subspace, and $\Omega$-balls in 
$E^+_{f^nx}$ are usual Banach space balls.
Combining the above and using Lemma \ref{lem:conjHolonomyContainment}(ii), we obtain
\begin{eqnarray*}
\nu_{\Sigma^2_n}(\Omega^2) & \le & (1+\e) \cdot m_{E^+_{f^nx}} \Omega_{E^+_{f^nx}}(\Psi^2_n y^2_n, (1+\e)^2 e^{n\la_b}) \\
& = & (1+\e) \cdot  (1+\e)^{3 \dim E^+_{f^nx}} \cdot
m_{E^+_{f^nx}} \Omega_{E^+_{f^nx}}(\Psi^1_n y^1_n, (1+\e)^{-1} e^{n\la_b})\\
& \le & (1+\e)^{2 + 3 \dim E^+_{f^nx}} \cdot \nu_{\Sigma^1_n}(\Omega^1)
\ .
\end{eqnarray*}
Here we have used the translation invariance and scaling properties of
the induced volumes $m_{E^+_{f^nx}}$ {on the linear subspaces} 
$E^+_{f^nx}$ (see Sect. 2.1). 
The discussion above suggests that we take $\{\Omega_i\}$ in the Outline to 
consist of sets of the form $\Omega^1$.

\subsection{A cover by $\Omega$-balls}
\label{subsec:cover}

We construct here the cover $\{\Omega_1, \cdots, \Omega_M\}$ of $f^n(A)$ in the Outline in Sect. \ref{subsec:preciseFormulationAC}. We continue to use the notation from Sect. \ref{subsec:holonomiesOmega}, but as we will be working exclusively with iterates of $\Sigma^1$, we will drop the superscript 
$1$ in $\Sigma_n^1$. We say a cover has {\it multiplicity}
$\le C$ if no point is contained in more than $C$ elements of the cover.

\begin{prop}\label{prop:BCL}
Let $A \subset \check \Sigma$ be compact, and let 
$n$ be large enough that Lemma \ref{lem:conjHolonomyContainment} holds with $\e=1$. 
Then there is a finite set $S = S_{n, A} = \{y_i\}_{i =1 }^M \subset f^n A$ with the property that $\{\Omega_{\Sigma_n}(y_{i}, e^{n \la_b})\}_{i = 1}^M$ is a cover of $f^n A$ with multiplicity $\leq C_{m^+}$, where the constant $C_{m^+}$ depends only on $m^+ := \dim E^+$.
\end{prop}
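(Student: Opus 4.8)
The plan is to run the classical equal-radii covering argument, but to carry out all separation and packing estimates not in $\Sigma_n$ itself — whose Banach-space geometry we have no control over — but in the finite-dimensional \emph{linear} subspaces $E^+_{f^nx}$, onto which the relevant pieces of $\Sigma_n$ are carried by the near-isometric projections of Lemma \ref{cor:expFlat}. Write $r=e^{n\la_b}$, fix a small $\e\in(0,1)$, and take $n$ large — at least as large as the hypothesis requires, and, uniformly over $x\in\bar U$, large enough that Lemmas \ref{lem:chartContainment} and \ref{cor:expFlat} apply with this $\e$. Two facts will be used repeatedly. First, for $w\in f^nA$, writing $x=x(w)\in\bar U$ for the point with $f^{-n}w\in W^{ss}_{{\rm loc},x}$, Lemma \ref{lem:chartContainment}(i) gives $\Omega_{\Sigma_n}(w,10r)\subset\Sigma^1_{n,x}=\exp_{f^nx}\graph g^1_n$; this is the step where $\la_b<\la_c$ enters. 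Second, on $\Sigma^1_{n,x}$ the map $\Psi:=\pi^+_{f^nx}\circ\exp_{f^nx}^{-1}$ is a $(1+\e)$-bi-Lipschitz bijection onto the convex set $\tilde B^+_{f^nx}(\d c_n^{-1})\subset E^+_{f^nx}$ with Jacobian $\det(d\Psi)\in[(1+\e)^{-1},1+\e]$, and — the domain being convex and $10r$ much smaller than $\d c_n^{-1}$ — for every $w'\in f^nA$ and $s\le10r$ one has $B_{E^+_{f^nx}}\big(\Psi w',\tfrac{s}{1+\e}\big)\subseteq\Psi\big(\Omega_{\Sigma_n}(w',s)\big)$ and $|\Psi u-\Psi v|\le(1+\e)|u-v|$ for $u,v\in\Sigma^1_{n,x}$. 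Both facts are immediate from Lemma \ref{cor:expFlat} and the flatness estimates for $g^1_n$ recorded before Section \ref{subsec:cover}.

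Next I would choose $S$ by greedy maximality: let $S\subset f^nA$ be maximal under inclusion subject to the $\Omega$-balls $\{\Omega_{\Sigma_n}(w,r/4):w\in S\}$ being pairwise disjoint. This $S$ is finite: for $w\in f^nA$ the set $\Omega_{\Sigma_n}(w,r/4)$ contains $\Psi^{-1}\big(B_{E^+_{f^nx(w)}}(\Psi w,\tfrac{r}{4(1+\e)})\big)$ and hence, by the Jacobian bound, has $\nu_{\Sigma_n}$-measure bounded below by a positive number depending only on $n,\e,m^+$; since $A$, hence $f^nA$, is compact, these pairwise disjoint sets all sit in a fixed bounded portion of the finite-dimensional manifold $\Sigma_n$, which has finite induced volume. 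I claim $\{\Omega_{\Sigma_n}(w,r):w\in S\}$ covers $f^nA$. Indeed, for $w'\in f^nA\setminus S$ maximality produces $w\in S$ and a point $z\in\Omega_{\Sigma_n}(w',r/4)\cap\Omega_{\Sigma_n}(w,r/4)$; then $|w-w'|\le r/2$, so any arc in $\Sigma_n\cap B_\Bc(w',r/4)$ stays in $B_\Bc(w,r)$, and concatenating an arc in $\Omega_{\Sigma_n}(w',r/4)$ joining $w'$ to $z$ with an arc in $\Omega_{\Sigma_n}(w,r/4)\subset\Sigma_n\cap B_\Bc(w,r)$ joining $z$ to $w$ exhibits $w'$ in the component of $\Sigma_n\cap B_\Bc(w,r)$ containing $w$, i.e. $w'\in\Omega_{\Sigma_n}(w,r)$.

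For the multiplicity bound, suppose $z\in\bigcap_{j=1}^k\Omega_{\Sigma_n}(w_{i_j},r)$ with $w_{i_1},\dots,w_{i_k}\in S$ distinct. Since $|z-w_{i_j}|\le r$ for all $j$, also $|w_{i_j}-w_{i_1}|\le 2r$, so an arc in $\Omega_{\Sigma_n}(w_{i_j},r)$ followed by an arc from $z$ to $w_{i_1}$ inside $\Sigma_n\cap B_\Bc(w_{i_1},r)$ lies in $\Sigma_n\cap B_\Bc(w_{i_1},3r)$; hence $\Omega_{\Sigma_n}(w_{i_j},r)\subset\Omega_{\Sigma_n}(w_{i_1},3r)\subseteq\Omega_{\Sigma_n}(w_{i_1},10r)\subset\Sigma^1_{n,x}$, $x=x(w_{i_1})$, by the first fact. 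So $z$ and all the $w_{i_j}$ lie in the single graph sheet $\Sigma^1_{n,x}$, where $\Psi$ is available: $|\Psi w_{i_j}-\Psi z|\le(1+\e)r$ for every $j$, while for $j\ne j'$ one must have $|\Psi w_{i_j}-\Psi w_{i_{j'}}|\ge\tfrac{r}{4(1+\e)}$, for otherwise $\Psi w_{i_{j'}}\in B_{E^+_{f^nx}}(\Psi w_{i_j},\tfrac{r}{4(1+\e)})\subseteq\Psi\big(\Omega_{\Sigma_n}(w_{i_j},r/4)\big)$ would give $w_{i_{j'}}\in\Omega_{\Sigma_n}(w_{i_j},r/4)$, contradicting disjointness in $S$. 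Thus $\Psi w_{i_1},\dots,\Psi w_{i_k}$ are $\tfrac{r}{4(1+\e)}$-separated points of the ball $B_{E^+_{f^nx}}(\Psi z,(1+\e)r)$; the disjoint balls $B_{E^+_{f^nx}}(\Psi w_{i_j},\tfrac{r}{8(1+\e)})$ lie in $B_{E^+_{f^nx}}(\Psi z,2r)$, and comparing Haar measures on the $m^+$-dimensional normed space $E^+_{f^nx}$ (which scale as the $m^+$-th power of the radius) yields $k\le\big(16(1+\e)\big)^{m^+}\le 32^{m^+}=:C_{m^+}$.

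The only genuinely delicate ingredient is the containment $\Omega_{\Sigma_n}(w,10r)\subset\Sigma^1_{n,x}$ — that $\Omega$-balls of radius comparable to $e^{n\la_b}$ about points of $f^nA$ cannot leave a single graph sheet over $E^+_{f^nx}$ — since this is exactly what makes the passage to the linear model legitimate; but it is precisely Lemma \ref{lem:chartContainment}(i), already established through the choice $\la^-<\la_b<\la_c<\la^+$. Everything else is the standard equal-radii Besicovitch bookkeeping, the only novelty being that the final packing estimate is performed after pushing forward by the near-isometry $\Psi$, so that it takes place in a genuine finite-dimensional normed space rather than in $\Sigma_n$.
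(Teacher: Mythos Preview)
Your proof is correct and follows essentially the same strategy as the paper: choose a maximal separated set in $f^nA$ (you use disjoint $\Omega$-balls of radius $r/4$, the paper uses $r/2$), verify the covering by concatenating short arcs in $\Sigma_n$, and then bound the multiplicity by projecting via $\Psi=\pi^+_{f^nx}\circ\exp_{f^nx}^{-1}$ into the finite-dimensional linear space $E^+_{f^nx}$ and applying a volume/packing count there, with Lemma~\ref{lem:chartContainment} guaranteeing that all relevant $\Omega$-balls sit in a single graph sheet $\Sigma^1_{n,x}$. The only cosmetic differences are that the paper bounds $\#S_i$ (the number of $\Omega_j$ meeting a fixed $\Omega_i$) rather than the multiplicity at a point directly, and arrives at $24^{m^+}$ instead of your $32^{m^+}$; both are equivalent formulations of the same argument.
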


\begin{proof} Writing $r=e^{n \la_b}$, we  take $S= \{y_i\}_{i =1 }^M$ to be a  $\big(\frac{r}{2} \big)$-\emph{maximal separated set} in $f^n A$, i.e., 
\begin{itemize}
\item[(a)] $\Omega_{\Sigma_n}(y_{i}, \frac{r}{2}) \cap \Omega_{\Sigma_n}(y_{j}, \frac{r}{2}) = \emptyset$ for any $1 \leq i < j \leq M$, and 
\item[(b)] for any 
$y \in f^n A$, $\Omega_{\Sigma_n}(y, \frac{r}{2}) \cap \Omega_{\Sigma_n}(y_{i}, \frac{r}{2}) \neq \emptyset$ for some $i \in \{1, \cdots, M\}$.
\end{itemize}
That such a set exists and is finite follows from the compactness of $f^n A$ and of 
$\Sigma_n$ for all $n > 0$; details are left to the reader.

To complete the proof, we will show that (i) $\{\Omega_{\Sigma_n} (y_{i}, r) \}_{i = 1}^M$ is a cover of $f^n A$ and (ii) the multiplicity of this cover is
bounded by a constant depending only on $m^+$. 

That (i) holds follows from the following: Given $y \in f^nA$, let $y_{i}$ be
given by property (b), and let $z \in \Omega_{\Sigma_n}(y, \frac{r}{2}) \cap \Omega_{\Sigma_n}(y_{i}, \frac{r}{2})$. Since $y$ and $z$ both lie in a
connected component of $\Sigma_n \cap \{w \in \Bc: |w-y| \le \frac{r}{2}\}$, 
there is a continuous path in $\Sigma_n \cap \{w: |w-y| \le \frac{r}{2}\}$
connecting $y$ and $z$. Likewise, there is a continuous path in
$\Sigma_n \cap \{|w-y_{i}| \le \frac{r}{2}\}$ connecting $z$ and $y_{i}$.
Concatenating these two paths, we obtain that $y \in 
\Omega_{\Sigma_n}(y_{i},r)$.

To prove (ii), for each $i$ we let $S_i =  \{j \in \{1, \cdots, M\} \setminus \{i \} : \Omega_{\Sigma_n}(y_{i}, r) \cap \Omega_{\Sigma_n}(y_{j}, r) \neq \emptyset\}$.
Then the multiplicity of the cover $\{\Omega_{\Sigma_n} (y_{i},r) \}_{i = 1}^M$ is no worse than 
\[
\max_{1 \leq i \leq M} \# S_i + 1 \, ,
\] so it suffices to bound $\# S_i$ by a constant depending only on $m^+ = \dim E^+$. 

For fixed $i$, we let $x_i \in f^n\bar U$ be such that $f^{-n} y_{i} \in 
W^{ss}_{{\rm loc},f^{-n}x_i}$, and recall that $\Omega_{\Sigma_n}(y_{i}, 10 r) \subset \tilde B_{x_{i}}(\d c_n^{-1})$ (Lemma \ref{lem:chartContainment}). Letting 
$\Psi = \pi^+_{x_{i}} \circ \exp_{x_{ i}}^{-1}$, we now pass from
$\Omega$-balls in $\Sigma_n$ to balls in $E^+_{x_i}$ via  Corollary \ref{cor:expFlat}:
For $j \in S_i$, since $\Omega_{\Sigma_n}(y_{j}, \frac{r}{2}) \subset \Omega_{\Sigma_n}(y_{i}, 3 r)$, we have 
$\Omega_{E^{+}_{x_{i}}}(\Psi y_{j}, \frac{r}{4}) \subset
\Omega_{E^{+}_{x_{i}}}(\Psi y_{i}, 6r)$. As the sets
$\Omega_{\Sigma_n}(y_{j}, \frac{r}{2})$ are pairwise disjoint by property
(a) above, so are the sets $\Omega_{E^{+}_{x_{i}}}(\Psi y_{j}, \frac{r}{4})$. 
By volume count, the maximum number of such sets that can fit inside
$\Omega_{E^{+}_{x_{i}}}(\Psi y_{i}, 6r)$
is no more than $24^{\dim E^+}$. We have thus shown that
 $\# S_i \leq 24^{\dim E^+}$, completing the proof.
\end{proof}

Summarizing what we have proved {\it vis a vis} the Outline in Sect. \ref{subsec:preciseFormulationAC}: 
In addition to the requirement in Proposition \ref{prop:BCL}, let $n$ be chosen large 
enough that $f^{-n}\Omega_{\Sigma_n} (f^nz, e^{n \la_b}) \subset \Oc$ for all $z \in A$, 
and let $\{\Omega_i \}$ be the cover
$\{\Omega_{\Sigma_n} (y_{i}, e^{n \la_b} ) \}_{i = 1}^M$ in Proposition \ref{prop:BCL}.
Then (a) and (b) in the Outline hold, the constant in ``$\lesssim$" in (b)
being the multiplicity of this cover. 

\subsection{Completing the proof}

We continue to use the notation in Sects. \ref{subsec:preciseFormulationAC} and \ref{subsec:holonomiesOmega}. To prove the  remaining item in the Outline, item (c), it suffices to prove the following. 

\begin{prop}\label{prop:massEstimate}
There exists $D>0$
such that for all $n$ sufficiently large and for all $y \in \check \Sigma^1$, 
\[
\nu_{\Sigma^2} \bigg( p \big(\check \Sigma^1 \cap f^{-n} \Omega_{\Sigma^1_n}(f^ny, e^{n \la_b}) \big) \bigg) \leq D \nu_{\Sigma^1} \big( f^{-n} \Omega_{\Sigma_n^1}(f^ny, e^{ n \la_b}) \big) \, .
\]
\end{prop}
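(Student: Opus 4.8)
The plan is to relate the two sides of the inequality by pushing everything forward to time $n$, where the geometry becomes essentially linear, and using the distortion estimates for the dynamics restricted to transversals. Fix $y \in \check\Sigma^1$, write $y^2 = p(y)$, and let $x \in \bar U$ be the base point with $y \in W^{ss}_{{\rm loc},x}$. Abbreviate $\Omega^1 = \Omega_{\Sigma^1_n}(f^n y, e^{n\la_b})$ and $\Omega^2 = \Omega_{\Sigma^2_n}(f^n y^2, 2e^{n\la_b})$, taking $n$ large enough that Lemma \ref{lem:conjHolonomyContainment} applies with $\e=1$, so that $p_n(\check\Sigma^1_n \cap \Omega^1) \subset \Omega^2$. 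By the change-of-variables formula for induced volumes under the $C^1$ diffeomorphisms $f^{-n}|_{\Sigma^i_n}$, we have
\[
\nu_{\Sigma^1}\big(f^{-n}\Omega^1\big) = \int_{\Omega^1} \det\big(df^{-n}_{z} \mid T_z \Sigma^1_n\big)\, d\nu_{\Sigma^1_n}(z)\ ,
\]
and similarly for $\Sigma^2$; so the task reduces to comparing $\nu_{\Sigma^1_n}(\Omega^1)$ with $\nu_{\Sigma^2_n}(\Omega^2)$ (which was already done at the end of Section \ref{subsec:holonomiesOmega}, giving $\nu_{\Sigma^2_n}(\Omega^2) \lesssim \nu_{\Sigma^1_n}(\Omega^1)$ with constant $(1+\e)^{2+3m^+}$) and controlling the two Jacobian integrands.

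The key point is that the Jacobians $\det(df^{-n}_z \mid T_z\Sigma^i_n)$ are \emph{almost constant} over $\Omega^i$, and the two ``almost constants'' for $i=1$ and $i=2$ are comparable. For the first claim I would invoke Lemma \ref{lem:distTransversals}: applied to the forward graph transform sequence $\{g^i_k\}$ with $c_0 = l_0^3$ and $L_0 = \Lip'(dg_0^i) \lesssim l_0 \Lip(d\sigma^i)$, it gives, for $z, z' \in f^{-n}\Omega^i$,
\[
\Big|\log \frac{\det(df^n_{z}\mid T_{z}\Sigma^i)}{\det(df^n_{z'}\mid T_{z'}\Sigma^i)}\Big| \leq D_{l_0, L_0}\cdot \max\{(e^{-\d_2}(e^{\la^+}-\d))^{-n},1\}\cdot |f^n z - f^n z'|\ ,
\]
and since $|f^n z - f^n z'| \leq 2 e^{n\la_b}$ inside an $\Omega$-ball of radius $e^{n\la_b}$, while $\la_b < \la^+$ and $\d, \d_2$ are chosen small, the product $e^{n\la_b}\cdot \max\{(e^{-\d_2}(e^{\la^+}-\d))^{-n},1\}$ is bounded uniformly in $n$ (indeed tends to $0$ when $\la^+>0$); hence the distortion of $df^{\pm n}$ across each $\Omega^i$ is bounded by a constant $D_1$ independent of $n$ and of $y$. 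So
\[
\nu_{\Sigma^1}(f^{-n}\Omega^1) \approx \det(df^{-n}_{y}\mid T_y\Sigma^1)\cdot \nu_{\Sigma^1_n}(\Omega^1)\ ,\qquad
\nu_{\Sigma^2}(p(\check\Sigma^1\cap f^{-n}\Omega^1)) \lesssim \det(df^{-n}_{y^2}\mid T_{y^2}\Sigma^2)\cdot \nu_{\Sigma^2_n}(\Omega^2)
\]
with uniform implied constants, evaluating the representative Jacobians at the single points $y$ and $y^2 = p(y)$.

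It then remains to show $\det(df^{-n}_{y^2}\mid T_{y^2}\Sigma^2) \lesssim \det(df^{-n}_{y}\mid T_y\Sigma^1)$, equivalently $\det(df^n_{y}\mid T_y\Sigma^1) \lesssim \det(df^n_{y^2}\mid T_{y^2}\Sigma^2)$, with a constant independent of $n$ and $y$. This is the heart of the matter and I expect it to be the main obstacle. The idea is the standard telescoping / Hopf-type argument: write $\det(df^n_{y^i}\mid T_{y^i}\Sigma^i) = \prod_{k=0}^{n-1}\det(df_{f^k y^i}\mid T_{f^k y^i}\Sigma^i_k)$, and compare the $k$-th factors for $i=1$ and $i=2$ using Proposition \ref{prop:detReg} (with the constant dependence of Remark \ref{rmk:constantsDependence}). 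The inputs needed are: (1) $|f^k y^1 - f^k y^2| \leq (e^{\la^-}+\d)^k |y^1-y^2|$, i.e., the two points are exponentially close along $W^{ss}$, so $|df_{f^k y^1} - df_{f^k y^2}| \lesssim (e^{\la^-}+\d)^k$ by $C^2$-smoothness and compactness of $\As$; and (2) $d_H(T_{f^k y^1}\Sigma^1_k, T_{f^k y^2}\Sigma^2_k) \to 0$ exponentially — this follows from Lemma \ref{lem:expFlatness}, which says both tangent spaces are exponentially close to $E^+_{f^k x}$ (graphs $g^i_k$ have derivative $\lesssim e^{k(\la^- - \la^+ + \d)}$), plus a norm-change estimate, so the Hausdorff distance is bounded by $\lesssim e^{k(\la^- - \la^+ + \d)} + (e^{\la^-}+\d)^k$ which is summable. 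Feeding these two exponentially-small quantities into \eqref{regularity} gives $|\log(\det(df_{f^k y^1}\mid\cdot)/\det(df_{f^k y^2}\mid\cdot))| \leq L\cdot(\text{summable in }k)$, so $\sum_{k=0}^{n-1}$ of these is bounded uniformly in $n$, yielding the desired comparison of the full products. The one wrinkle to watch is the $M$-dependence of the constants $L,\e$ in Proposition \ref{prop:detReg}: one must verify that $|df_{f^k y^i}|$ and $|(df_{f^k y^i}\mid T_{f^k y^i}\Sigma^i_k)^{-1}|$ stay bounded by a single $M$ along the whole orbit — the first from compactness of $\As$ and the second from Lemma \ref{lem:forwardGT}(c) (weak expansion along the transversals, $|u^1_k - u^2_k|' \geq (e^{\la^+}-\d)^{k}|u^1_0-u^2_0|'$ giving a lower bound on the restricted Jacobian) together with the uniformity sets — so that $\e$ can be taken uniform and, for $k$ large, the exponentially small perturbations indeed fall below the threshold $\e$.
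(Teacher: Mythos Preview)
Your approach is essentially the paper's: push to time $n$ via $p_n$, use the distortion estimate (Lemma~\ref{lem:distTransversals}) to replace the Jacobian integrals by point values at $y$ and $p(y)$, compare the $\Omega$-ball volumes via projection to $E^+$ (end of Sect.~\ref{subsec:holonomiesOmega}), and telescope the determinant ratio using Proposition~\ref{prop:detReg}. The paper packages this last step as Lemma~\ref{lem:infProduct} and its Claim~\ref{cla:distcontrol}.

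The one place where you understate the difficulty is the ``wrinkle'' you flag at the end. You assert that $|(df_{f^k y^i}\mid T_{f^k y^i}\Sigma^i_k)^{-1}|$ stays bounded by a single $M$ along the whole orbit, appealing to Lemma~\ref{lem:forwardGT}(c) ``together with the uniformity sets''. But $f^k x$ does \emph{not} lie in $\Gamma_{l_0}$; it lies only in $\Gamma_{l_0 e^{k\d_2}}$, and converting the adapted-norm lower bound of Lemma~\ref{lem:forwardGT}(c) to the standard norm costs a factor of $l(f^k x)$. So $M$ is not uniform --- it grows like $l_0 e^{k\d_2}$, and by Remark~\ref{rmk:constantsDependence} the constant $L$ in Proposition~\ref{prop:detReg} then grows like $l(f^k x)^q$ for some $q$ depending only on $\dim E^+$. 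The paper's Claim~\ref{cla:distcontrol} makes this explicit, putting $l(\hat x)^q$ on the right-hand side of the one-step estimate; in the summation \eqref{eq:partialProduct} one then absorbs $l(f^k x)^q \leq l_0^q e^{kq\d_2}$ into the exponentially decaying factors $(e^{\la^-}+\d)^k$ and $e^{k(\frac12(\la^- - \la^+) + \d_2)}$, and summability requires $\d_2$ small relative to $\la^+ - \la^-$, which was arranged in advance. (The shrinking threshold $\e$ in Proposition~\ref{prop:detReg} is handled, as in the paper, by interpolating with intermediate points rather than by hoping the perturbation is already small enough.) This is a repair rather than a structural change to your argument.
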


\begin{proof}
By Lemma \ref{lem:conjHolonomyContainment}, it suffices to bound from
above the ratio
\[
(*) := \frac{\nu_{\Sigma^2} \big(  f^{-n} \Omega_{\Sigma^2_n}(f^np(y), (1 + \e) e^{n \la_b}) \big) }{\nu_{\Sigma^1} \big( f^{-n} \Omega_{\Sigma^1_n}(f^ny,  e^{n \la_b}) \big)}\ .
\]
By the change of variables formula, 
$\nu_{\Sigma^1} \big( f^{-n} \Omega_{\Sigma^1_n}(f^ny,  e^{n \la_b})\big)$
is related to $\nu_{\Sigma^1_n}(\Omega_{\Sigma^1_n}(f^ny,  e^{n \la_b}))$ by
the Jacobian of $f^n|(f^{-n} \Omega_{\Sigma^1_n}(f^ny,  e^{n \la_b}))$, and
this in turn is related to the corresponding Jacobian at the point $y$
by the distortion estimate in Lemma \ref{lem:distTransversals}. 
Examining the estimate in Lemma \ref{lem:distTransversals},
note that the $\Omega$-balls we consider have radius $e^{n \la_b}$, which 
contracts faster than the contraction rate $e^{n \la^+}$ along transversals. Thus 
for any fixed $\e > 0$, we may take $n$ sufficiently large (depending on 
$\e$, $l_0$ and the Lipschitz constant $L_0 = \Lip'(d g_0^i)$) so that the
 the right-hand side in Lemma \ref{lem:distTransversals}
 is $\leq \log(1 + \e)$.

Applying this estimate to both the numerator and denominator of $(*)$ and invoking Lemma \ref{lem:conjHolonomyContainment}, we obtain 
\[
(*) \leq (1+\e)^2  \cdot \underbrace{\frac{\det(df^n_{y} | T_{y} \Sigma^1) }{\det(df^n_{p(y)} | T_{p(y)} \Sigma^2)}}_{I} \cdot \underbrace{\frac{\nu_{\Sigma^2_n} \Omega_{\Sigma^2_n} (f^np(y), (1 + \e) e^{n \la_b})}{\nu_{\Sigma^1_n} \Omega_{\Sigma^1_n} (f^ny, e^{n \la_b})}}_{II}\ .
\]
As Term $II$ has been bounded at the end of Sect. \ref{subsec:holonomiesOmega}, it remains to bound
Term $I$. For $N \in \N$, we introduce the function $\Delta_N : \check \Sigma^1 \to [0,\infty)$ by
\[
\Delta_N(y) := \frac{\det(df_{y}^N | T_{ y} \Sigma^1)}{\det(df_{  p (y)}^N | T_{ p (y)} \Sigma^2)} 
=  \prod_{n = 0}^{N-1} \frac{\det(df_{f^n y} | T_{f^n y} \Sigma^1_n)}{\det(df_{f^n p (y)} | T_{f^n p (y)} \Sigma^2_n)} \, .
\]
To complete the proof of Proposition \ref{prop:massEstimate}, it suffices to show that there exists $\check D$ (depending only on 
$l_0$) such that $ \check D^{-1} \leq \Delta_N(y) \leq \check D$ for all $y \in \check \Sigma^1$
and for all $N \in \mathbb Z^+$. This follows from Lemma \ref{lem:infProduct} below. 
\end{proof}

We prove a stronger result than needed here, namely the existence of the 
$N \to \infty$ limit, which is needed 
in Section 5. Observe that $y \mapsto \Delta_N(y)$ is continuous in $y \in \check \Sigma^1$ for any fixed $N > 0$ by the continuity of $p$ (Lemma \ref{lem:continuousHolonomy}) and the regularity of $\det$ (Proposition \ref{prop:detReg}). 

\begin{lem}\label{lem:infProduct}
For any $y \in \check \Sigma^1$, the limit $\Delta(y):= \lim_{N \to \infty} \Delta_N(y)$ exists. The convergence $\Delta_N \to \Delta$ is uniform, so $y \mapsto \Delta(y)$ is therefore continuous. Moreover, there is a constant $ \check D =  \check D_{l_0} > 0$ (depending only on $l_0$) such that $ \check D^{-1} \leq \Delta(y) \leq \check D$ for any $y \in  \check \Sigma^1$.
\end{lem}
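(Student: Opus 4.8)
The plan is to establish convergence of the infinite product $\prod_{n \geq 0} \frac{\det(df_{f^n y}|T_{f^ny}\Sigma^1_n)}{\det(df_{f^np(y)}|T_{f^np(y)}\Sigma^2_n)}$ by comparing the numerator and denominator term-by-term, using that $f^n y$ and $f^np(y)$ are dynamically close (they lie on the same local strong stable leaf, hence their forward orbits contract together at rate $e^{n\la^-}$), and that the transversals $\Sigma^1_n$ and $\Sigma^2_n$ become exponentially flat. First I would take logarithms and write $\log \Delta_N(y) = \sum_{n=0}^{N-1} \log \frac{\det(df_{f^ny}|T_{f^ny}\Sigma^1_n)}{\det(df_{f^np(y)}|T_{f^np(y)}\Sigma^2_n)}$, so that it suffices to show the $n$-th summand is bounded by $C \rho^n$ for some $\rho < 1$ and constant $C = C_{l_0}$, with both independent of $y$ and $N$; then the series converges absolutely and uniformly in $y$, and the partial sums are uniformly bounded, giving both the existence of $\Delta(y)$ and the two-sided bound $\check D^{-1} \leq \Delta(y) \leq \check D$ with $\check D = \exp(\sum_n C\rho^n)$.

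To bound the $n$-th summand, I would apply Proposition \ref{prop:detReg} (in the form of Remark \ref{rmk:constantsDependence} to track the dependence of the constants on $M$) to the operators $A_1 = df_{f^ny}$, $A_2 = df_{f^np(y)}$ and the subspaces $E_1 = T_{f^ny}\Sigma^1_n$, $E_2 = T_{f^np(y)}\Sigma^2_n$. For this I need three inputs, all with bounds uniform in $y$: (i) $|df_{f^ny}|, |df_{f^np(y)}|$ and the inverses of their restrictions to the respective tangent spaces are bounded by a fixed $M$ — the bound on the operator norms comes from $C^2$-smoothness of $f$ on the compact set $\As$, and the bound on $(A_j|_{E_j})^{-1}$ comes from Lemma \ref{lem:forwardGT}(c) (the ``weak contraction'' estimate $|u^1_{n+1}-u^2_{n+1}|' \geq (e^{\la^+}-\d)|u^1_n - u^2_n|'$ shows $df|_{E_j}$ is boundedly invertible in the adapted norm, and the norm equivalence on uniformity sets transfers this to $|\cdot|$, at the cost of a constant growing with $l_0 e^{n\d_2}$ — but since $\d_2 \ll \la^+ - \la^-$ this growth is dominated); (ii) $|df_{f^ny} - df_{f^np(y)}| \lesssim |f^ny - f^np(y)| \lesssim e^{n\la^-}$, using the Lipschitz bound on $df$ (from $C^2$-smoothness) together with the contraction $|f^ny - f^np(y)| \leq 2(e^{\la^-}+\d)^n \cdot \d c_0^{-1}$ from \eqref{eq:pnNearIdentity}; (iii) $d_H(T_{f^ny}\Sigma^1_n, T_{f^np(y)}\Sigma^2_n) \to 0$ exponentially — here I would use the exponential flatness of the transversals (Lemma \ref{lem:expFlatness}): both tangent spaces are exponentially close to $E^+_{f^nx}$ (up to the chart-switching correction, which is itself controlled), so they are exponentially close to each other, at rate $e^{n(\la^- - \la^+ + \d)}$. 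Feeding (ii) and (iii) into \eqref{regularity} gives the $n$-th summand $\lesssim e^{n\la^-} + e^{n(\la^- - \la^+ + \d)}$, which is summable.

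The main obstacle will be bookkeeping the uniformity of constants — specifically, making sure the $M$ appearing in Proposition \ref{prop:detReg}, and hence the constant $L = C_k M^{10k}$ from Remark \ref{rmk:constantsDependence}, does not grow with $n$ in a way that overwhelms the exponential decay. The subtlety is that $f^ny$ and $f^np(y)$ leave the uniformity set $\Gamma_{l_0}$; the relevant slow-variation bound from Lemma \ref{lem:charts1} only gives $l(f^nx) \leq l_0 e^{n\d_2}$, so the norm-equivalence constants degrade geometrically at rate $e^{n\d_2}$. This is fine precisely because $\d_2$ was chosen with $\d_2 \ll \la^+ - \la^-$, so that after raising to whatever fixed power $10k$ appears, the product $e^{10kn\d_2} \cdot e^{n(\la^- - \la^+ + \d)}$ still decays; I would make this choice of $\d_2$ explicit here. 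Everything else — the change of variables, the continuity of $\Delta_N$, and extracting the uniform two-sided bound from the summable series — is routine once the exponential estimate on each factor is in hand.
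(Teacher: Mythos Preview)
Your proposal is correct and follows essentially the same approach as the paper: take logarithms, bound the $n$-th summand via the determinant regularity estimate (Proposition~\ref{prop:detReg} / Remark~\ref{rmk:constantsDependence}), and feed in the exponential contraction $|f^n y - f^n p(y)| \lesssim (e^{\la^-}+\d)^n$ together with the exponential flatness of the tangent spaces from Lemma~\ref{lem:expFlatness}, while tracking that the degradation $l(f^n x) \le l_0 e^{n\d_2}$ is dominated because $\d_2 \ll \la^+ - \la^-$. The only packaging difference is that the paper isolates the determinant comparison as a separate claim and uses an interpolation trick (inserting intermediate points $\hat y^{1,\ell}$ and linear maps $L_{1,\ell}$) to ensure the smallness hypothesis $|A_1 - A_2|, d_H(E_1,E_2) \le \e$ of Proposition~\ref{prop:detReg} is met for \emph{every} $n$, not just large $n$; you should either do the same or remark that the finitely many initial terms are uniformly bounded and absorbed into $\check D$.
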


\begin{proof}[Proof of Lemma \ref{lem:infProduct}] Given $N$ and $y$, we define for $k \in \mathbb Z^+$
\[
\Delta_{N,k}(y) = \frac{\det(df^{k}_{f^N y} | T_{f^N y} \Sigma^1_N)}{\det(df^{k}_{f^N p(y)} | T_{f^N p( y)} \Sigma^2_N)} =
  \prod_{n = 0}^{k-1} \frac{\det(df_{f^{N+n} y} | T_{f^{N+n} y} \Sigma^1_{N+n})}{\det(df_{f^{N+n} p (y)} | T_{f^{N+n} p (y)} \Sigma^2_{N+n})} \, .
\]
We will show there exists $D_1$ (depending on $l_0$ but
not on $y$ or $N$) such that for all $k \ge 1$:
\begin{align}\label{eq:uniformM}
\Delta_{N,k}(y) \le D_1 \cdot e^{N \big( \frac12 (\la^- - \la^+) + 2 \d_2 \big)} \, .
\end{align}
The proof of (\ref{eq:uniformM}) relies on regularity properties of the determinant function, which should not be taken granted as our notion of volume on finite dimensional subspaces
was defined one subspace at a time.
We state formally the estimate used:

\begin{cla}\label{cla:distcontrol}
Let $\hat x \in \Gamma$, $\hat y^1, \hat y^2 \in 
\tilde B_{\hat x}(\d l(\hat x)^{-1})$, and let $L_1, L_2 : E^+_{\hat x} \to E^-_{\hat x}$ be linear maps for which $|L_i|_{\hat x}' \leq 1/10$ for $i =1,2$. Write $E_i = (\Id + L_i) E^+_{\hat x}$. Then, we have the estimate
\begin{align}\label{eq:stabDistortion}
\bigg| \log \frac{\det (df_{\hat y^1}|E_1)}{\det(df_{\hat y^2} |E_2) } \bigg| \leq \text{Const. }  l(\hat x)^q \big( |\hat y^1 - \hat y^2| + |L_1 - L_2| \big) \, ,
\end{align}
where $q \in \N$ depends only on $\dim E^+$.
\end{cla}

To deduce this inequality from Proposition \ref{prop:detReg}, observe that $M$ as in
Proposition \ref{prop:detReg} is determined by $|(df_{\hat y^i}|_{E_i})^{-1}| \leq 2 e^{- \la^+} l(\hat x)$, and
$\epsilon$ can be taken as small as need be by introducing intermediate points of the 
form $\hat y^{1,\ell}:=\hat y^1+\ell \gamma(\hat y^2-\hat y^1)$ and linear maps $
L_{1,\ell}:= L_1 + \ell \gamma (L_2 - L_1)$ for $\ell = 1,2, \dots \gamma^{-1}, \gamma \ll 1$, and applying
Proposition \ref{prop:detReg} to $\hat y^{1,\ell}$ and $\hat y^{1,\ell+1}$, $L_{1,\ell}$
and $L_{1,\ell+1}$. The constant on the right side of \eqref{eq:stabDistortion} 
follows from Remark \ref{rmk:constantsDependence} after Proposition \ref{prop:detReg}.

\smallskip
Letting $x \in \bar U$ be such that $y \in W^{ss}_{{\rm loc},x}$ and using Claim \ref{cla:distcontrol} we estimate
\begin{align} \label{eq:partialProduct}\begin{split}
\bigg| & \log \frac{\det(df^k_{f^N y} | T_{f^N y} \Sigma^1_N)}{\det(df^k_{f^N p(y)} | T_{f^N p( y)} \Sigma^2_N)} \bigg| 
 \leq \sum_{n = 0}^{k-1} \bigg| \log \frac{\det(df_{f^{N + n} y} | T_{f^{N + n} y} \Sigma^1_{N + n})}{\det(df_{f^{N + n} p(y)} | T_{f^{N + n} p( y)} \Sigma^2_{N + n})}  \bigg| \\
& \leq \text{Const. } \sum_{n = 0}^{k-1} l(f^{N + n} x)^q \cdot \big( |f^{N + n} y - f^{N + n} p (y)| + |( d g_{N + n}^1)_{u^1_{N + n}} - (dg_{N + n}^2)_{u^2_{N + n}}| \big) \, ,
\end{split}\end{align}
where $u^1_n = \pi^+_{f^{N+n} x} \circ \exp_{f^{N+n} x}^{-1}( f^{N+n} y)$ and
$u^2_n = \pi^+_{f^{N+n} x} \circ \exp_{f^{N+n} x}^{-1}(f^{N+n} p(y))$.
To bound the RHS of \eqref{eq:partialProduct}, recall the estimates $l(f^{N+n} x) \leq e^{(N+n) \d_2} l_0$ and $|f^{N+n} y - f^{N+n} p(y) | \lesssim (e^{ \la^-} + \d)^{N+n}$. For the last term, the estimate \eqref{eq:flatGraph} in the proof of Lemma \ref{lem:expFlatness} gives
 the bound
\[
 |( d g_{N+n}^1)_{u^1_{N+n}} - (dg_{N+n}^2)_{u^2_{N+n}}| \lesssim e^{(N+n) \big(\frac12 (\la^- - \la^+) + \d_2 \big)} \, .
\]
Assuming, as we may, that $\d_2 \ll \frac14( \la^- - \la^+)$, the desired
result follows.
\end{proof}

The proof of {\bf Theorem A} is now complete.


\section{Derivative computation}\label{sec:derivativeComputation}

The setting is as in the beginning of Section 4. We now compute explicitly the Radon-Nikodym derivative 
of the holonomy map $p$. 

\medskip \noindent 
{\bf Theorem B} {\it For all $y \in \check \Sigma^1$,
\[ 
\frac{d (p^{-1}_*\nu_{\Sigma^2})}{d \nu_{\Sigma^1}}(y) = \Delta(y) \, 
\]
where $\Delta(y)$ is given by Lemma \ref{lem:infProduct}.
}

\subsection{Outline of proof}\label{subsec:outlineDerAC}

As $ y \mapsto \Delta(y)$ is continuous on $\check \Sigma^1$, by considering small
enough sets on which $\Delta(y)$ is nearly constant, one deduces {\bf Theorem B} 
from

\begin{prop}\label{prop:main2}
Let $A \subset \check \Sigma^1$ be compact. Then,
\[
\nu_{\Sigma^2}\big(p(A) \big) \leq \sup_{y \in A} \Delta(y) \cdot \nu_{\Sigma^1}(A) \, .
\]
\end{prop}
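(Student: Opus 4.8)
The plan is to reprove the essential content of \textbf{Theorem A}, but now tracking the constant carefully so that it converges to $\sup_{y \in A} \Delta(y)$ rather than merely a uniform constant $C$. The key point is that in the proof of \textbf{Theorem A} the only place a ``lossy'' constant entered was in bounding Term $I = \det(df^n_y | T_y\Sigma^1)/\det(df^n_{p(y)}|T_{p(y)}\Sigma^2)$, where we used Lemma \ref{lem:infProduct} to get crude two-sided bounds by $\check D^{\pm 1}$. But Lemma \ref{lem:infProduct} actually gives more: $\Delta_N(y) \to \Delta(y)$ uniformly on $\check \Sigma^1$. So instead of $\Delta_N(y) \le \check D$ we can say that for any $\e > 0$ there is $N_0$ so that $\Delta_N(y) \le (1+\e)\Delta(y)$ for all $N \ge N_0$ and all $y$.

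First I would fix $\e > 0$ and a compact $A \subset \check \Sigma^1$, and take an open neighborhood $\Oc \supset A$ in $\Sigma^1$ with $\nu_{\Sigma^1}(\Oc) \le (1+\e)\nu_{\Sigma^1}(A)$. Then, exactly as in the Outline of Section \ref{subsec:preciseFormulationAC}, I choose $n$ large and produce the cover $\{\Omega_i\}_{i=1}^M = \{\Omega_{\Sigma^1_n}(y_{i,n}, e^{n\la_b})\}$ of $f^n A$ from Proposition \ref{prop:BCL}, with $y_i \in A$, multiplicity $\le C_{m^+}$, and $f^{-n}\Omega_i \subset \Oc$. The key refinement is in property (c): I would rerun the proof of Proposition \ref{prop:massEstimate} but keep Term $I$ as $\Delta_n(y_i)$ instead of bounding it by $\check D$. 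This gives, for $n$ large enough (depending on $\e$, $l_0$, $\Lip(d\sigma^i)$),
\[
\nu_{\Sigma^2}\big(p(\check\Sigma^1 \cap f^{-n}\Omega_i)\big) \le (1+\e)^{C} \, \Delta_n(y_i) \, \nu_{\Sigma^1}(f^{-n}\Omega_i)
\]
for some fixed exponent $C = C(m^+)$ absorbing the $(1+\e)$-factors from Lemma \ref{cor:expFlat}, Lemma \ref{lem:conjHolonomyContainment}, and the distortion estimate of Lemma \ref{lem:distTransversals}. Now applying the uniform convergence $\Delta_n(y_i) \le (1+\e)\sup_{y\in A}\Delta(y)$ (taking $n$ also larger than the $N_0$ above), summing over $i$, using the multiplicity bound to get $\sum_i \nu_{\Sigma^1}(f^{-n}\Omega_i) \le C_{m^+}\nu_{\Sigma^1}(\bigcup_i f^{-n}\Omega_i) \le C_{m^+}\nu_{\Sigma^1}(\Oc)$, and using $\nu_{\Sigma^1}(\Oc) \le (1+\e)\nu_{\Sigma^1}(A)$, I obtain
\[
\nu_{\Sigma^2}(p(A)) \le \sum_i \nu_{\Sigma^2}\big(p(\check\Sigma^1\cap f^{-n}\Omega_i)\big) \le (1+\e)^{C+2}\, C_{m^+} \sup_{y\in A}\Delta(y)\,\nu_{\Sigma^1}(A).
\]

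The one thing that needs care is that the multiplicity constant $C_{m^+}$ does \emph{not} shrink to $1$ as $\e \to 0$, so the naive argument above only yields $\nu_{\Sigma^2}(p(A)) \le C_{m^+}\sup_A\Delta \cdot \nu_{\Sigma^1}(A)$, which is weaker than claimed. The fix is the standard one: instead of covering by $\Omega$-balls with overlaps, I would pass to a disjointification. Since $\Delta$ is continuous, partition $A$ into finitely many Borel pieces $A_1,\dots,A_r$ on each of which $\Delta$ varies by a factor at most $1+\e$; by inner regularity it suffices to treat each $A_j$ compact. On each $A_j$, choose the $\Omega$-balls but then replace them by the disjoint sets $\Omega_i \setminus \bigcup_{j<i}\Omega_j$ at the $f^n$-level — their $f^{-n}$-preimages are disjoint subsets of $\Oc$ whose union contains $A_j$, so $\sum_i \nu_{\Sigma^1}(f^{-n}(\Omega_i\setminus\bigcup_{j<i}\Omega_j)) = \nu_{\Sigma^1}(\bigcup_i f^{-n}\Omega_i\cap\Oc\text{-part}) \le \nu_{\Sigma^1}(\Oc_j)$ with no multiplicity loss, where $\Oc_j \supset A_j$ is chosen with $\nu_{\Sigma^1}(\Oc_j)\le(1+\e)\nu_{\Sigma^1}(A_j)$. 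Property (c) still applies to each disjoint piece since it is contained in a single $\Omega$-ball. Running the estimate on each $A_j$, summing over $j$, and sending $\e \to 0$ yields the clean bound $\nu_{\Sigma^2}(p(A)) \le \sup_{y\in A}\Delta(y)\cdot\nu_{\Sigma^1}(A)$.

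The main obstacle, then, is precisely this bookkeeping: making sure no multiplicative constant survives the $\e \to 0$ limit. Concretely, I expect the delicate point to be verifying that property (c) — the per-ball mass estimate $\nu_{\Sigma^2}(p(\check\Sigma^1\cap f^{-n}\Omega_i)) \le (1+o(1))\Delta_n(y_i)\nu_{\Sigma^1}(f^{-n}\Omega_i)$ — holds with a multiplicative error that is genuinely $1+o(1)$ as $n\to\infty$, and then that it survives intersecting $\Omega_i$ with the disjointifying sets (which it does, since the estimate was proved for $p$ restricted to an arbitrary subset of $\check\Sigma^1\cap f^{-n}\Omega_i$ via Lemma \ref{lem:conjHolonomyContainment} and the flatness/distortion lemmas, all of which are monotone under passing to subsets). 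Everything else is a rerun of Section 4 with the constants tracked.
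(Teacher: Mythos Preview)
Your diagnosis is exactly right: the only obstruction to getting the sharp constant is the multiplicity factor $C_{m^+}$, and some disjointification is needed. But your proposed fix has a genuine gap. You claim that property (c) ``survives intersecting $\Omega_i$ with the disjointifying sets\dots since the estimate was proved for $p$ restricted to an arbitrary subset of $\check\Sigma^1\cap f^{-n}\Omega_i$\dots all of which are monotone under passing to subsets.'' This is not correct. The per-ball estimate from Proposition \ref{prop:massEstimate} is obtained by the chain
\[
p_n(\check\Sigma^1_n \cap \Omega^1) \subset \Omega^2, \qquad \nu_{\Sigma^2_n}(\Omega^2) \le (1+\e)\,\nu_{\Sigma^1_n}(\Omega^1)\,,
\]
and the second step compares the volume of a \emph{full} $\Omega$-ball $\Omega^2$ to the full $\Omega$-ball $\Omega^1$. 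If you replace $\Omega^1$ by a subset $S = \Omega_i \setminus \bigcup_{j<i}\Omega_j$, you still get $p_n(\check\Sigma^1_n\cap S) \subset \Omega^2$, but the right-hand side remains $\nu_{\Sigma^1_n}(\Omega^1)$, not $\nu_{\Sigma^1_n}(S)$. So either you keep the full $\Omega_i$ on the right and re-acquire the multiplicity factor when summing, or you want $\nu_{\Sigma^1_n}(S)$ on the right, which nothing in Section~4 provides. The image $p_n(S\cap\check\Sigma^1_n)$ is not contained in any set whose volume is comparable to $\nu_{\Sigma^1_n}(S)$ by the arguments available.

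The paper's resolution is more delicate than naive disjointification. One constructs sets
\[
V_i = \Omega_i \setminus \Big(\bigcup_{j<i}\Omega_j \cup \bigcup_{j>i}\tfrac12\Omega_j\Big)
\]
which are pairwise disjoint, cover $f^nA$, and crucially satisfy $\tfrac12\Omega_i \subset V_i \subset \Omega_i$. One then proves (Lemma \ref{lem:partitionPassesWell}) that $\nu_{\Sigma^2_n}(p_n(V_i\cap\check\Sigma^1_n)) \le (1+\e)\,\nu_{\Sigma^1_n}(V_i)$ directly, by constructing analogous sets $V_i^2(\a)$ on $\Sigma^2_n$ and $\bar V_i^k(\a)$ on $E^+_{x_i}$, and running a chain of containments $p_n(V_i^1) \subset V_i^2(\a)$, $\Psi_i(V_i^2(\a))\subset \bar V_i^2(\a^2) \subset \bar V_i^1(\a^3)$, together with a boundary-volume estimate $m_{E^+}(\bar V_i^1(\a^3)\setminus \bar V_i^1(\a^{-1})) \le \e'\, m_{E^+}(\bar V_i^1(\a^{-1}))$. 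The latter uses that each $V_i$ is built from a \emph{bounded} number (at most $\#S_i' \le C'_{m^+}$) of balls, so its ``boundary layer'' has controllably small volume relative to its interior. This geometric control over the shape of $V_i$ is what makes the per-piece estimate go through with constant $1+\e$; an arbitrary measurable subset of $\Omega_i$ would not admit such an argument.
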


Below, we fix $\e > 0$, to be regarded as acceptable error in our pursuit of the inequality in Proposition \ref{prop:main2}. 
As the main source of the overestimate in the proof of {\bf Theorem A} comes from the overcovering by $\Omega$-balls of $f^n A$, we now replace this over-cover
by a collection of pairwise disjoint sets. An important requirement for this
new cover is that
the volumes of its elements must be transformed  nicely by holonomy maps, 
a property we have, up until now, proved only for $\Omega$-balls 
that are large in radius 
compared to the distance between transversals (Sect. \ref{subsec:holonomiesOmega}).

\medskip
\noindent {\it Construction of a special cover.} For $n$ sufficiently large, we apply Proposition \ref{prop:BCL} to obtain a cover $\{\Omega_i\}_{i = 1}^M$ of $f^n A$ by balls of the form $\Omega_i = \Omega_{\Sigma^1_n}(y_i, e^{n \la_b})$, where $\{y_i\} \subset f^n A$. 
Writing $\a \Omega_i = \Omega_{\Sigma_n^i}(y_{ i}, \a e^{n \la_b})$ for $\a > 0$, we
define the collection $\{V_i\}_{i =1 }^M$ of pairwise disjoint measurable sets that
will comprise this special cover as follows:
\[
V_{i} = \Omega_i \setminus \bigg( \bigcup_{j < i } \Omega_j \ \cup \ \bigcup_{i < j } \frac12 \Omega_j  \bigg) \, .
\]
The following are immediate:
\begin{itemize}
\item[(i)] $\frac12 \Omega_i \subset V_i \subset \Omega_i$ for any $1 \leq i \leq M$, and
\item[(ii)] $ f^{-n} \big( \cup_i V_i \big)  \supset A$.
\end{itemize}
Let $S_i$ be as in the proof of Proposition \ref{prop:BCL}, that is to say, $S_i$ consists of those
indices $j \ne i$ such that $\Omega_j \cap \Omega_i \ne \emptyset$. Observe that only those $\Omega_j$ 
with $j \in S_i$ are involved in the construction of $V_i$, and that as shown in Proposition \ref{prop:BCL}, the cardinality of $S_i$ is bounded by a constant that depends only on $m^+ = \dim E^+$.
In partciular, it is independent of $M$, which can grow exponentially with $n$. 
As we will see, our control
on the ``geometry" of the sets $V_i$ will depend crucially on this uniform bound on the
cardinality of $S_i$.

\medskip

Continuing to allow dependence on $\d, l_0$ and $\Lip(\sigma^i)$, our main estimate is the following:

\begin{lem}\label{lem:partitionPassesWell}
Assume that $n$ is sufficiently large depending on $\e > 0$. Then, for any $1 \leq i \leq M$ we have that
\[
\nu_{\Sigma^2_n} \big( p_n(V_i \cap \check \Sigma^1_n) \big) \leq (1 + \e) \nu_{\Sigma^1_n}(V_i) \, .
\]
\end{lem}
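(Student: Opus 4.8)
The plan is to reduce the estimate on $V_i$ to the $\Omega$-ball estimates already established in Section \ref{subsec:holonomiesOmega}, by squeezing $V_i$ between the two balls $\frac12\Omega_i \subset V_i \subset \Omega_i$ and passing everything — on both transversals — into the linear subspace $E^+_{f^n x_i}$ via the maps $\Psi^1 = \Psi^1_{n,x_i}$ and $\Psi^2 = \Psi^2_{n,x_i}$ of Lemma \ref{cor:expFlat}, where $x_i \in \bar U$ is the point with $f^{-n}y_i \in W^{ss}_{{\rm loc},f^{-n}x_i}$. First I would record that, by Lemma \ref{lem:chartContainment}(i), $\Omega_{\Sigma^1_n}(y_i, 10 e^{n\la_b})$ and the corresponding $\Sigma^2_n$-ball both sit well inside the chart $\tilde B_{f^n x_i}(\d c_n^{-1})$ for $n$ large, so $\Psi^1, \Psi^2$ and the graph transforms $g_n^i$ are defined on the relevant sets. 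By Lemma \ref{lem:conjHolonomyContainment}, $p_n$ is $\e$-close to the identity on these sets, so $p_n(V_i \cap \check\Sigma^1_n)$ is contained in $\Omega_{\Sigma^2_n}(y_i^2, (1+\e)e^{n\la_b})$-type sets, and moreover — this is the key geometric point — $\Psi^2 p_n = \Psi^1 + (\text{error of size} \lesssim (e^{\la^-}+\d)^n \ll e^{n\la_b})$ uniformly, since both $p_n$ and the chart-switch contribute only $W^{ss}$-contraction-sized errors. Hence $\Psi^2(p_n(V_i \cap \check\Sigma^1_n))$ is sandwiched between $(1-\e)$- and $(1+\e)$-dilates of $\Psi^1(V_i)$ inside $E^+_{f^nx_i}$.

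The second ingredient is to transfer volumes through $\Psi^1$ and $\Psi^2$. By Lemma \ref{cor:expFlat}(ii), $\det(d\Psi^i) \in [(1+\e)^{-1}, 1+\e]$ on the relevant sets for $n$ large, so for any Borel set $E$ in the domain,
\[
(1+\e)^{-1} m_{E^+_{f^nx_i}}(\Psi^i E) \leq \nu_{\Sigma^i_n}(E) \leq (1+\e)\, m_{E^+_{f^nx_i}}(\Psi^i E)\, .
\]
Applying this to $E = p_n(V_i \cap \check\Sigma^1_n)$ through $\Psi^2$ and to $E = V_i$ through $\Psi^1$, the lemma will follow once we know
\[
m_{E^+_{f^nx_i}}\big(\Psi^2 p_n(V_i \cap \check\Sigma^1_n)\big) \leq (1+\e)^{O(1)}\, m_{E^+_{f^nx_i}}(\Psi^1 V_i)\, ,
\]
and since in $E^+_{f^nx_i}$ a translation-invariant Haar measure scales by $t^{m^+}$ under dilation by $t$, the $(1\pm\e)$-sandwiching from the previous paragraph gives exactly this, with the loss absorbed into $(1+\e)^{2+3m^+}$ or similar — the same bookkeeping as at the end of Sect. \ref{subsec:holonomiesOmega}. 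Choosing $\e$ small relative to the final target error and replacing it throughout completes the estimate.

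The step I expect to be the main obstacle is controlling $\Psi^1 V_i$ from below — i.e. making sure the images of the ``holes'' carved out of $\Omega_i$ (the sets $\Omega_j$, $j<i$, and $\frac12\Omega_j$, $j>i$, with $j \in S_i$) do not distort $V_i$'s shape in a way that breaks the $\Psi^2 p_n \approx \Psi^1$ comparison. The point is that $p_n$ and the two chart-switches $\Psi^1,\Psi^2$ move points by at most $O((e^{\la^-}+\d)^n)$, which is exponentially smaller than the ballscale $e^{n\la_b}$; so the set-difference operation defining $V_i$ commutes with these maps up to an error that is negligible at scale $e^{n\la_b}$. Concretely, $\Psi^2(p_n(\Omega_j \cap \check\Sigma^1_n))$ and $\Psi^1(\Omega_j)$ are both within a $(1+\e)$-factor and an additive $o(e^{n\la_b})$ of the same ball in $E^+_{f^nx_i}$, and since only the boundedly-many indices $j \in S_i$ (bound depending only on $m^+$, by Proposition \ref{prop:BCL}) participate, the accumulated error from removing all the holes stays within the tolerated $(1+\e)$. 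This uniform bound on $\#S_i$ is exactly why the argument does not degrade as $M \to \infty$ with $n$. The remaining details — the Lipschitz/flatness control on $g_n^i$ from Lemma \ref{lem:expFlatness}, and the norm-change factors — are routine given Lemmas \ref{lem:chartContainment}--\ref{cor:expFlat}.
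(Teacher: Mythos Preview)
Your approach is essentially the paper's, and the key ingredients are all there: push both sets into $E^+_{f^n x_i}$ via $\Psi^1,\Psi^2$, use that $p_n$ displaces points by $O((e^{\la^-}+\d)^n)\ll e^{n\la_b}$, exploit the bounded cardinality of $S_i$, and use the lower bound $\frac12\Omega_i\subset V_i$ to control the relative error. The paper formalizes exactly this via a four-step containment chain through auxiliary ``fattened'' sets $V^k_i(\a)$ and their $E^+$-analogues $\bar V^k_i(\a)$, defined by scaling each constituent ball separately by $\a^{\pm 1}$.

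One point of imprecision you should fix: your claim that $\Psi^2(p_n(V_i\cap\check\Sigma^1_n))$ is contained in a $(1+\e)$-\emph{dilate} of $\Psi^1(V_i)$, followed by the scaling law $m(tS)=t^{m^+}m(S)$, is not correct as stated. The perturbation is not a global homothety of $\Psi^1(V_i)$ from any center; rather, the outer ball is enlarged and each hole is shrunk independently (centers also shift slightly). So the containing set is $\bar V^1_i(\a)$ for $\a$ close to $1$, not $\a\cdot\Psi^1(V_i)$. The volume comparison you need is therefore not a scaling identity but a boundary estimate: $m_{E^+}(\bar V^1_i(\a^3)\setminus\bar V^1_i(\a^{-1}))$ is a union of at most $\#S_i'+1$ annuli, each of volume $\lesssim(\a^{3m^+}-\a^{-3m^+})e^{n\la_b m^+}\omega$, and this is small relative to $m_{E^+}(\bar V^1_i(\a^{-1}))\ge(\tfrac12 e^{n\la_b})^{m^+}\omega$. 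Your third paragraph gets this right in spirit, so this is a wording issue rather than a genuine gap --- but the ``dilate'' sentence in your second paragraph should be replaced by the annulus estimate.
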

The proof of Lemma \ref{lem:partitionPassesWell} is deferred to the next subsection.

\begin{proof}[Proof of Proposition \ref{prop:main2} assuming Lemma \ref{lem:partitionPassesWell}]  Let $\e>0$ be given.
We fix an open set $\Oc \supset A$ with the property that $\nu_{\Sigma^1}(\Oc \setminus A) \leq \e \, \nu_{\Sigma^1}(A)$. The value of $n$ will be increased a finite number of times
as we go along. First we assume it is large enough that
$f^{-n}V_i \subset \Oc$ where $\{V_i\}$ is as constructed above. We then
bound $\nu_{\Sigma^2}\big( p(A) \big)$ by
\begin{align}\label{eq:partSplit}
\nu_{\Sigma^2}\big( p(A) \big) \leq \sum_{i = 1}^M \nu_{\Sigma^2} \big( f^{-n} p_n(V_i \cap \check \Sigma^1_n) \big) \, .
\end{align}
As before, we have, from Lemma \ref{lem:distTransversals},
\[
\frac{\nu_{\Sigma^2} \big( f^{-n} p_n(V_i \cap \check \Sigma^1_n) \big)}{\nu_{\Sigma^1}(f^{-n} V_i)}
\leq
(1 + \e)^2 \Delta_n(f^{-n} y_{i}) \cdot \frac{\nu_{\Sigma^2_n} \big( p_n(V_i \cap \check \Sigma^1_n) \big) }{\nu_{\Sigma^1_n} (V_i)} \, ,
\]
and assume $n$ is large enough that $\Delta_n \le (1+\e) \Delta$ on $\check 
\Sigma^1$ (Lemma \ref{lem:infProduct}). Applying these inequalities together with Lemma \ref{lem:partitionPassesWell} to the right side of \eqref{eq:partSplit} and
summing, we obtain
\[
\nu_{\Sigma^2}\big( p(A) \big) \leq (1 + \e)^3 \sup_{y \in A} \Delta(y) \cdot \nu_{\Sigma^1}(\Oc) \leq (1 + \e)^4 \sup_{y \in A} \Delta(y) \cdot \nu_{\Sigma^1}(A) \, .
\]
Taking $\e \to 0$ completes the proof.
\end{proof}

\subsection{Proof of Lemma \ref{lem:partitionPassesWell}}\label{subsec:proofLemmaPartPassesWell}

For fixed $n$ and $i$, we let $V^1_i=V_i$ be as defined in the last subsection, and 
extend this notation  in the following ways: 

\medskip \noindent
(i) Let $y_j^1 = y_j, y_j^2 = p_n(y^1_j)$, and define $V^2_i$ analogously, 
with $y^2_j$ in the place of $y^1_j$.

\medskip \noindent
(ii) For $k=1,2$ and $\a>1$, we define
\begin{align}\label{eq:fattenPartitionAC}
V_i^k(\a) := \a \Omega_i^k \setminus \bigg( \bigcup_{\substack{j < i }} \a^{-1} \Omega_j^k \ \cup \ \bigcup_{\substack{j > i }} \frac{ \a^{-1}}{2} \Omega_j^k  \bigg) \, .
\end{align}
Notice that $V^k_i(\a) \subset V^k_i(\a')$ for $\a < \a'$, and for $\a>1$, 
the sets $V^k_i(\a)$ and $V^k_j(\a)$ are not necessarily pairwise disjoint. 

\medskip \noindent
(iii) We consider next analogous constructions on $E^+$.
Let $\Psi_i = \pi^+_{x_i} \circ (\exp_{x_i})^{-1}$ be projection to $E^+_{x_i}$ where
$x_i$ is such that $f^{-n}x_i \in \bar U$ and $y^k_i \in W^{ss}_{{\rm loc},x_i}$.
Let $S'_i:=\{j : y^1_j \in \Omega_{\Sigma^1_n}(y^1_i, 3e^{\la_b n})\}$. For $j \in S'_i$, we
let $\bar \Omega_{j}^k = \Omega_{E^+_{x_i}}(\Psi_i (y_j^k), e^{n \la_b})$, and define, for 
$\a \approx 1$, 
\[
\bar V_i^k(\a) := \a \bar \Omega_{i}^k \setminus \bigg( \bigcup_{\substack{j < i, j \in S'_i}} \a^{-1} \bar \Omega_{j}^k \ \cup \ \bigcup_{\substack{j > i, j \in S'_i}} \frac{ \a^{-1}}{2} \bar \Omega_{ j}^k  \bigg) \, .
\]
Reasoning similar to those in Section 4 shows that for $n$ large enough, 
$j \in S'_i$ are the only indices involved in the definition of $\bar V_i^k(\a)$, and
that $\# S_i' \leq C_{m^+}'$ for all $i$, where $C_{m^+}'$ depends on $m^+ = \dim E^+$ alone.
It is important to note that $\bar V_i^k(\a)$ is {\it not} the $\Psi_i$-image of
$V^k_i$, and that it is constructed using real balls in $E^+_{x_i}$, the centers of which
are projections of those used in the construction of $V^k_i$.

\begin{proof}[Proof of Lemma \ref{lem:partitionPassesWell}] Let $\e>0$ be given. We first choose 
$\e'=\e'(\e)>0$ and $\a=\a(\e, \e')>1$ with $\e', |\a-1|$
sufficiently small, and then $n=n(\e, \e', \a)$ sufficiently large; exact dependences will become clear in the course of the proof. Let
$\{V^1_i\}$ be a special cover of $f^n(A)$ as defined in Sect. \ref{subsec:outlineDerAC}. 
We assume $\{V^1_i\}$
 is constructed from $\{\Omega^1_i\}$, where each $\Omega^1_i = 
\Omega_{\Sigma^1_n}(y^1_i, e^{\la_b n})$, and
let $i$ be fixed throughout. We will show that the assertion in
Lemma \ref{lem:partitionPassesWell} follows from the following sequence of approximations:
\begin{itemize}
\item[(1)] $p_n(V^1_i \cap \check \Sigma^1_n) \subset V^2_i(\a)$,
\item[(2)]
$\Psi_i(V^1_i) \supset \bar V^1_i(\a^{-1})$ \ and \ $\Psi_i (V^2_i(\a)) \subset
\bar V^2_i(\a^2)$ ,
\item[(3)] $\bar V^2_i(\a^2) \subset \bar V^1_i(\a^3)$ ,
\item[(4)] $m_{E^+_{x_i}}(\bar V^1_i(\a^3)) \le (1+\e') m_{E^+_{x_i}}(\bar V^1_i(\a^{-1}))$ .
\end{itemize}
Applying (1)-(4) in the order stated together with Lemma \ref{cor:expFlat}(ii), we
obtain
\begin{eqnarray*}
\nu_{\Sigma^2_n}(p_n(V_i \cap \check \Sigma^1_n)) & \le & 
\nu_{\Sigma^2_n}(V^2_i(\a))\\
& \le & (1+\e') \ m_{E^+_{x_i}}(\bar V^2_i(\a^2)) \\
& \le & (1+\e')\  m_{E^+_{x_i}}(\bar V^1_i(\a^3)) \\
& \le & (1+\e')^2 \ m_{E^+_{x_i}}(\bar V^1_i(\a^{-1}))\\
& \le & (1+\e')^3 \ \nu_{\Sigma^1_n}(V^1_i)\ .
\end{eqnarray*}

It remains to prove (1)-(4).

\medskip \noindent
{\it Proof of (1).} Lemma \ref{lem:conjHolonomyContainment} asserts that for $n$ large enough,
$$p_n(\Omega_i^1 \cap \check \Sigma^1_n) 
\subset \a \Omega_i^2\ .
$$
A similar proof applied
to $p_n^{-1}$ gives, for $j \in S_i$, 
$$p_n^{-1}(\a^{-1} \Omega_j^2 \cap \check \Sigma^2_n) 
\subset \Omega_j^1 \qquad \mbox{and} \qquad
p_n^{-1}(\frac12 \a^{-1} \Omega_j^2 \cap \check \Sigma^2_n) \subset
\frac12 \Omega^1_j.
$$
Combining these relations give the desired result.

\medskip \noindent
{\it Proof of (2).} This follows from the bi-Lipschitz property of $\Psi_i$ wth Lipschitz constant $\approx 1$ (Lemma \ref{cor:expFlat}(i)). 
It implies in particular
$\Psi_i(\Omega^1_i) \supset \a^{-1} \bar \Omega^1_i$ and $\Psi_i(\Omega^1_j) \subset \a \bar \Omega^1_j$
for $j \in S_i$, the latter being valid because $\Omega^1_j \subset 
\Omega_{\Sigma^1_n}(y_i^1, 10 e^{\la_b n})$. The second containment is proved similarly.

\medskip \noindent
{\it Proof of (3).} It suffices to estimate $|\Psi_i(y^1_j) -\Psi_i(y^2_j)|$ 
where $j \in S'_i \cup \{i\}$; the rest of the containments are as before. This quantity is equal to
$$
|\pi^+_{x_i}(p_n(y^1_j)- y^1_j)| \le |\pi^+_{x_i}| \cdot |p_n(y^1_j)- y^1_j|
\le 2 l_0 e^{n\d_2} \cdot 2 \d c_0^{-1}(e^{\la_-} + \d)^n\ ,
$$
which can be made arbitrarily small relative to $e^{\la_b n}$ by taking $n$ large.

\medskip \noindent
{\it Proof of (4).} We will show
\begin{equation} \label{vol}
m_{E^+_{x_i}}(\bar V^1_i(\a^3)) \setminus \bar V^1_i(\a^{-1}))
\le \e' m_{E^+_{x_i}}(\bar V^1_i(\a^{-1}))\ .
\end{equation}
Let $\omega$ denote the volume of the unit ball in $\mathbb R^{m^+}$
where $m^+ = \dim E^+$. Then the left side of \eqref{vol} is bounded from above by
\[
e^{\la_b n m^+} \omega [(\a^{3m^+} -\a^{-m^+}) + (\#S'_i)(\a^{m^+} - \a^{-3m^+})]\ .
\]
As for the right side of \eqref{vol}, recall that we have made sure $\bar V^1_i(\a')
\supset \frac12 \bar \Omega^1_i$ for any $\a' > 1, \a' - 1 \ll 1$. Thus
$$
m_{E^+_{x_i}}(\bar V^1_i(\a^{-1})) \ge (\frac12 e^{\la_b n})^{m^+} \omega\ ,
$$
proving \eqref{vol} provided $\a$ is sufficiently close to $1$.
\end{proof}

\begin{rmk}
In the proof of (4) above, we have used the implicitly the fact that norm balls $B$ in $E^+$ are \emph{star convex}, i.e., they contain a point (the origin $0$) with the property that any other point  $q$ of $B$ is connected to $B$ by a segment $\ell$ connecting $0$ and $q$. This is the geometric property that enables us to estimate boundaries of norm balls by scaling, as we have done.
\end{rmk}

\section{SRB measures and phase-space observability}

In this section we discuss some consequences of {\bf Theorem A} when applied to
SRB measures with no zero Lyapunov exponents. {\bf Theorem C}, which asserts
that every such SRB measure can be decomposed into at most a countable number
of ergodic SRB measures, is proved is Sect. \ref{subsec:SRBcomponent}. {\bf Theorem D}, which asserts,
in a sense to be clarified, the ``visibility" of SRB measures as a subset of the phase space, 
is proved in Sect. \ref{subsec:globalHolAC}. 

\subsection{Ergodic components of SRB measures}\label{subsec:SRBcomponent}

In addition to the hypotheses (H1)--(H3) at the beginning of this paper, we introduce

\begin{itemize}
\item[(H4)] The Lyapunov exponents of $(f,\mu)$ are nonzero $\mu$-a.e.
\end{itemize}

The aim of this subsection is to prove

\bigskip \noindent
{\bf Theorem C.} {\it Assume (H1)--(H4), and that $\mu$
is an SRB measure. Then
$$
\mu = \sum_{i=1}^\infty c_i \mu_i \qquad \mbox{mod} \ 0
$$
where $c_i \ge 0$ and each $\mu_i$ is an ergodic SRB measure.}

\bigskip
To define SRB measures, we first recall the idea of
{\it stacks of local unstable manifolds} from \cite{BYentropy}. {
As there are no zero Lyapunov exponents and we are} interested 
only in the splitting $E^u=E^+$ and $E^s=E^-$, it suffices to consider
$\Gamma = \Gamma(0; m,p)$. Let $\Gamma_{l_0}$ and $K_n \subset \Gamma$ be
as before. For $\e > 0, \ x_0 \in \Bc$, we write $U(x_0, \e) 
= \{x \in \Bc : |x - x_0| < \e\}$.

\begin{lem}[Lemma 5.5 in \cite{BYentropy} ] \label{lem:uStacksAC}
Let $l_0 \geq 1, n_0 \in \N$, and let $x_0 \in \Gamma_{l_0} \cap K_{n_0}$. Then, there exists $\e_0 > 0$ such that for each $x \in U(x_0, \e_0) \cap \Gamma_{l_0} \cap K_{n_0}$, there is a $C^{1 + \Lip}$ mapping $\Theta^u (x) : B^+_{x_0}(\d l_0^{-3}) \to B^-_{x_0}(\d l_0^{-3}) $ such that $\exp_{x_0} \graph \Theta^u(x) \subset W^u_{x, {\rm loc}}$, $\Lip(\Theta^u(x))\leq \frac{1}{10}$ and $\Lip(d \Theta^u(x)) \leq C_u l_0^2$, where $C_u > 0$ is a constant independent of $\d$. Moreover, the assignment $x \mapsto \Theta^u(x)$ varies continuously in the uniform norm on  $C^0(B^+_{x_0}(\d l_0^{-3}), B^-_{x_0}(\d l_0^{-3}))$.
\end{lem}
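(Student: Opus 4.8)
The plan is to prove this exactly as in \cite{BYentropy}, Lemma 5.5, following the proof of Lemma \ref{lem:stableStacks} with the strong stable manifolds replaced by local unstable manifolds and the direction of time reversed. Since (H4) holds and we have taken $\Gamma = \Gamma(0;m,p)$, the exponents are $\la^+ = \tfrac{1}{2p} > 0 > -\tfrac{1}{2p} = \la^-$, so $df|_{E^+}$ is genuinely expanding and the standard local unstable manifold theorem in Lyapunov charts (cf.\ \cite{lianYoungHilbertMap}) applies: for each $x \in \Gamma$ it provides a $C^{1+\Lip}$ map $h^u_x : \tilde B^+_x(\d l(x)^{-1}) \to \tilde B^-_x(\d l(x)^{-1})$ with $h^u_x(0) = 0$, $(dh^u_x)_0 = 0$, $\Lip'(h^u_x) \leq \tfrac{1}{10}$, $\Lip'(dh^u_x) \leq C\, l(x)$, and $\tilde f_{f^{-1}x}(\graph h^u_{f^{-1}x}) \supset \graph h^u_x$; we set $W^u_{x,\mathrm{loc}} = \exp_x \graph h^u_x$. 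I would note at the outset that $h^u_x$ is the unique fixed point of the \emph{forward} graph transform over $E^+$ --- pushing sections at $f^{-k}x$ forward $k$ times through the chart maps and letting $k\to\infty$ --- which, like the backward graph transform of Lemma \ref{lem:backwardsGT}, is a contraction in the scale-invariant relative $C^0$-norm of Lemma \ref{lem:backwardsGT}, with a rate $q \in (0,1)$ independent of $x$.

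Next I would carry out the chart-switching step, the analog of Lemma \ref{lem:switchChart} and of part (a) of Lemma \ref{lem:stableStacks}. Fix $x_0 \in \Gamma_{l_0}\cap K_{n_0}$. Because $E^\pm$ vary continuously on $K_{n_0}$, for $x$ in a sufficiently small neighborhood $U(x_0,\e_0) \cap \Gamma_{l_0} \cap K_{n_0}$ the subspaces $E^\pm_x$ are $d_H$-close to $E^\pm_{x_0}$, so the subset $\exp_x\graph h^u_x$ of the phase space can be rewritten in the chart at $x_0$ as $\exp_{x_0}\graph(h^u_x)^{x_0}$, using the chart-switching notation $\phi_y^x$ of Section 3. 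Restricting to the domain $B^+_{x_0}(\d l_0^{-3})$ --- the drop in scale from $\d l_0^{-1}$ absorbs both the passage through the three norms $|\cdot|'_x, |\cdot|'_{x_0}, |\cdot|$ and the shrinking needed to keep the graph sufficiently flat --- defines $\Theta^u(x)$ with range in $B^-_{x_0}(\d l_0^{-3})$, $\exp_{x_0}\graph\Theta^u(x)\subset W^u_{x,\mathrm{loc}}$, $\Lip(\Theta^u(x))\leq\tfrac{1}{10}$, and (from $\Lip'(dh^u_x) \leq C l_0$ together with the norm-change factors) $\Lip(d\Theta^u(x))\leq C_u l_0^2$. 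The detailed tracking of the $l_0$-powers I would take from \cite{BYentropy}, Section 5.2.

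The substance of the lemma is the continuity of $x\mapsto\Theta^u(x)$ in the uniform norm, for which I would reproduce the three-term argument of Lemma \ref{lem:stableStacks}(b). Given $y^n\to x$ in $U(x_0,\e_0)\cap\Gamma_{l_0}\cap K_{n_0}$ and a fixed $k \in \Z^+$, let $\mathcal A^x_k$ (resp.\ $\mathcal A^{y^n}_k$) be the $k$-step forward graph transform of the zero section over $E^+_{f^{-k}x}$, re-expressed in the chart at $x_0$. Since $x, y^n \in \Gamma_{l_0}$ we have $f^{-k}x, f^{-k}y^n \in \Gamma_{l_0 e^{k\d_2}}$, where $E^\pm$ varies continuously --- the backward-uniformity-set analog of Lemma \ref{lem:contSplitting} --- and $f^{-k}y^n \to f^{-k}x$, so the argument of Lemma \ref{lem:contGraphTform} yields $\|\mathcal A^{y^n}_k - \mathcal A^x_k\|_{C^0} \to 0$ as $n\to\infty$ for each fixed $k$. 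On the other hand, the relative-norm distance between the zero section and $h^u_{f^{-k}x}$ is at most $\tfrac{1}{5}$ uniformly (both have Lipschitz constant $\leq\tfrac{1}{10}$), so the iterated-contraction property above gives, after evaluating the outputs at the fixed point $x$ with its controlled chart geometry, $\|\Theta^u(x) - \mathcal A^x_k\|_{C^0} \lesssim q^k$ and likewise for $y^n$, with constants uniform in $x$ and $n$. From
\[
\|\Theta^u(y^n)-\Theta^u(x)\|_{C^0} \leq \|\Theta^u(y^n)-\mathcal A^{y^n}_k\|_{C^0} + \|\mathcal A^{y^n}_k-\mathcal A^x_k\|_{C^0} + \|\mathcal A^x_k-\Theta^u(x)\|_{C^0},
\]
one chooses $k$ large and then $n$ large to conclude.

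I do not expect a serious obstacle: the argument is a faithful time-reversal of the stable case. If there is a delicate point it is the one in the third paragraph --- although $\exp_x\graph h^u_x$ is anchored at $x \in K_{n_0}$, the leaves are reconstructed from data at $f^{-k}x$, which need not lie in $K_{n_0}$, so the continuity of $E^\pm$ on $K_{n_0}$ alone does not suffice; one genuinely needs continuity of the Oseledets splitting across the dynamically-defined uniformity sets $\Gamma_{l_0 e^{k\d_2}}$, which is the content of the backward-uniformity analog of Lemma \ref{lem:contSplitting}. The remaining work --- the chart-switch bookkeeping and the routine graph-transform estimates --- is standard.
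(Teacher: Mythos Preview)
The paper does not give its own proof of this lemma: it is quoted verbatim as Lemma~5.5 of \cite{BYentropy}, and even the paper's proof of the stable-side analog (Lemma~\ref{lem:stableStacks}) refers back to \cite{BYentropy} for the chart-switching details. Your proposal is exactly the intended argument --- the faithful time-reversal of Lemma~\ref{lem:stableStacks}, replacing backward graph transforms by forward ones and forward-uniformity sets by backward-uniformity sets --- and you have correctly flagged the one genuinely nontrivial point, namely that continuity of the splitting at the backward iterates $f^{-k}x$ cannot be read off $K_{n_0}$ alone but requires the backward-uniformity analog of Lemma~\ref{lem:contSplitting}.
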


An \emph{unstable stack} $\Sc^u$ is a set of the form
\[
\Sc^u = \bigcup_{x \in \bar U} \exp_{x_0} \graph \Theta^u(x)
\]
for some fixed compact $\bar U \subset U(x_0, \e_0) \cap \Gamma_{l_0} \cap K_{n_0}$.
Given $\Sc^u$ with $\mu(\Sc^u) > 0$, let $\eta$ denote the (measurable) partition of $\Sc^u$ into unstable leaves. We consider the \emph{canonical disintegration} $\{\mu_{W}\}_{W \in \eta}$ of $\mu|_{\Sc^u}$ with respect to $\eta$, i.e., for Borel $K \subset \Sc^u$, we have
\[
\mu(K) = \int_{\Sc^u / \eta} \big(  \mu_{W}(K \cap W ) \big)  d \mu^T(W) \, .
\]
Here, $\mu^T$ is the quotient measure on $\Sc^u / \eta$; for details, see \cite{rokhlin}.

\begin{defn}\label{defn:SRBAC} Let $(f,\mu)$ satisfy (H1)--(H4), and assume that
$\la_1 > 0$ $\mu$-a.e. We say $\mu$ is an {\it SRB measure} if for any 
$\Gamma = \Gamma(0; m,p)$ and any unstable stack $\Sc^u$ of positive $\mu$-measure
consisting of leaves through $x \in \bar U \subset \Gamma_{l_0} \cap K_{n_0}$,
the disintegration $\{\mu_W\}_{W \in \eta}$ has the property that for $\mu^T$-almost every $W \in \eta$, 
$\mu_{W}$ is equivalent to $\nu_{W}$, the measure induced on $W$ from volume 
elements in $\Bc$.\footnote{We remark that Definition \ref{defn:SRBAC} is slightly stronger than the definition 
of SRB measures given in \cite{BYentropy}: here we assume not only that $\mu_W$ is absolutely continuous with respect to $\nu_W$ but that the densities are strictly positive $\nu_W$-a.e. 
This definition is more
convenient for us; the results in \cite{BYentropy} hold also under this definition.}
\end{defn}

The following terminology will be useful: Consider a homeomorphism $T$ 
of a compact metric space
$X$ preserving an invariant probability $\nu$. We say a point $x \in X$ is 
\emph{future-generic} with respect to $(T,\nu)$ if for every continuous $\phi : X \to \R$, we have that
\begin{align}\label{eq:forwardTAAC}
\lim_{n \to \infty} \frac1n \sum_{i = 0}^{n-1} \phi \circ T^i(y) = \int \phi \, d \nu \, .
\end{align}
{\it Past genericity} is defined similarly with $T$ replaced by $T^{-1}$,
and we say $T$ is {\it generic} if it is both future and past generic. 
It follows from the Birkhoff Ergodic Theorem that $\nu$-a.e. $x \in X$ is
generic with respect to $(T,\nu)$ if and only if $(T,\nu)$ is ergodic. Furthermore,
by the Ergodic Decomposition Theorem, for any invariant probability $\nu$,
$\nu$-a.e. $x$ is generic with respect to some ergodic measure $\nu_*^x$,
and $\nu = \int \nu_*^x d\nu(x)$.

\begin{proof}[Proof of Theorem C]  We will show that $\mu$ is {\it locally ergodic} 
in the following sense: 
For arbitrary $\Gamma = \Gamma(0;m,p)$ and $l_0, n_0$ 
for which $\mu(\Gamma_{l_0} \cap K_{n_0})>0$, 
it is easy to see that $\Gamma_{l_0} \cap K_{n_0}$ is the union of a countable number 
of positive $\mu$-measure sets $\bar U$, each one of which is small enough 
that it can be used to define both a stack of stable manifolds $\Sc^s_{\bar U}$ (Lemma \ref{lem:stableStacks}) and a stack of unstable leaves 
$\Sc^u_{\bar U}=\cup W$ (see above). We will show that for each such
$\bar U$, there is an ergodic measure  $\mu_* = \mu_*^{\bar U}$ with respect to
which $\mu$-a.e. $x \in \Sc^u_{\bar U}$ is generic.

Let $\bar U$ be fixed. Since $\mu$ is an SRB measure, it follows that for 
$\mu^T$-a.e. $W$ and 
$\nu_W$-a.e. $x \in W$, there is an ergodic measure $\mu_*^x$ with respect to which
$x$ is generic. First we note that if $x,y$ lie in the same $W$, then $\mu_*^x = \mu_*^y$ because orbits
through $x$ and $y$ are backward asymptotic. Thus for $\mu^T$-a.e. $W$ in $\Sc^u$,
there is an ergodic measure $\mu_*^W$ with respect to which $x$ is generic for
$\mu_W$-a.e. $x \in W$. To connect the $\mu_*^W$ for different $W$, observe that by {\bf Theorem A}, either (a) $\nu_W(W \cap \Sc^s)>0$ for every $W$, or 
(b) $\nu_W(W \cap \Sc^s)=0$ for every $W$. Since $\mu(\bar U)>0$ and
$\nu_W$ is absolutely continuous with respect to $\mu_W$, it follows that (a) must hold.
Furthermore, by the equivalence of $\mu_W$ and $\nu_W$ on $\mu^T$-a.e. $W$,
we have that $\mu_*^x$ is defined for $\nu_W$-a.e. $x \in W \cap \Sc^s$.
This together with $\mu_*^x = \mu_*^y$ for $y \in W^s_x$ implies that 
$\mu_*^W=\mu_*^{W'}$ for $\mu^T$-a.e. $W,W'$. 
This common measure $\mu_*^W$ is $\mu_*^{\bar U}$. By the ergodic decomposition
argument above, 
we have that $\mu_*^{\bar U}$ and $\mu$ coincide on $\Sc^u_{\bar U}$ mod zero.

Since a countable union of sets of the form $\bar U$ has 
full $\mu$-measure, it follows that $\mu$ has at most a countable number of 
{ergodic components $\mu_i$, each given by $\mu_*^{\bar U_i}$ for some 
$\bar U_i$. 

It remains to show that each $\mu_i$ is an SRB measure. That is,
we need to verify Definition \ref{defn:SRBAC} for the stack $\Sc^u_{\bar U}$
for every small compact set $\bar U$ with $\mu_i(\bar U)>0$ 
for which the stable/unstable stacks $\Sc^{s/u}_{\bar U}$ are defined as in Lemma \ref{lem:stableStacks}/Lemma \ref{lem:uStacksAC}. We will do so by checking that $\mu_i$ and $\mu$ coincide mod zero on $\Sc^u_{\bar U}$:
Repeating the above arguments, we have that $\mu$-almost every $x \in \Sc^u_{\bar U}$ is generic to an ergodic measure $\mu_*^{\bar U}$, i.e. $\mu|\Sc^u_{\bar U} = 
\mu_*^{\bar U}|\Sc^u_{\bar U}$. That $\mu_*^{\bar U} = \mu_i$ follows from the fact that 
$\mu_i$ is an ergodic component of $\mu$, and $\mu_i(\bar U) > 0$. }
\end{proof}

\subsection{Global holonomy and ``visibility" of SRB measures}\label{subsec:globalHolAC}

Consider the setting in Sect. 2.3 -- Section 3, with the notation and chart systems defined there.
For $x \in \Gamma$, define 
$$
W^{ss}_x := \{y \in \Bc : \limsup_{n \to \infty} \frac1n \log d(f^n x, f^n y) \leq \la^- \} \, .
$$
Because chart sizes shrink more slowly than $\la^-$, it is easy to see that
\begin{equation} \label{eq:characterizeWss}
W^{ss}_x = \bigcup_{n = 0}^\infty f^{-n} \big( W^{ss}_{f^n x, {\rm loc}} \big) \, ,
\end{equation}
where $h_x : \tilde B_x^-(\d_1' l(x)^{-1}) \to \tilde B_x^+(\d_1' l(x)^{-1})$ and
$W^{ss}_{ x, {\rm loc}} = \exp_x \graph h_x$ are as in Theorem \ref{thm:stabManifold}. The sets
$W^{ss}_x$ are {\it global strong stable sets} associated with points $x \in
\Gamma$. In the setting 
under consideration, they are not guaranteed to be immersed submanifolds, as 
$df_x$ is generally not onto and therefore not invertible. The manifold structure of $W^{ss}_x$ can be proved under the following 
assumption, which holds for 
the time-$t$ solution mappings of a broad class of dissipative parabolic PDEs \cite{henry}.

\begin{itemize}
\item[(D)] For any $x \in \Bc$, the operator $df_x$ has dense range in $\Bc$.
\end{itemize}

\begin{prop}[\cite{henry}]\label{prop:inverseImageManifold} Assume that $f$ satisfies assumption (D)
in addition to (H1) - (H3), and let $W$ be an embedded submanifold of codimension $k$. Then, $f^{-1} W$ is an embedded submanifold of codimension $k$.
\end{prop}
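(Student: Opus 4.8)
The plan is to verify that $f^{-1}W$ is an embedded submanifold of codimension $k$ by checking the local submanifold condition at each point $x_0 \in f^{-1}W$, using the implicit function theorem in Banach spaces together with hypothesis (D). Fix $x_0 \in f^{-1}W$ and write $w_0 = f(x_0) \in W$. Since $W$ is an embedded submanifold of codimension $k$, there is a neighborhood $V$ of $w_0$ in $\Bc$ and a $C^1$ submersion $\Phi : V \to \R^k$ with $\Phi^{-1}(0) = W \cap V$ and $d\Phi_{w}$ surjective for all $w \in V$; equivalently, $W \cap V$ is locally the graph of a $C^1$ map over a closed complemented subspace $\Bc_0$ of codimension $k$. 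I would work with the graph description, as it meshes better with the determinant/volume machinery in the paper and avoids needing $\Bc$ to admit nontrivial maps to $\R^k$ in an unnatural way, though either formulation works.

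The key steps, in order: (1) Choose a closed complemented decomposition $\Bc = \Bc_0 \oplus \Bc_1$ near $w_0$ with $\dim \Bc_1 = k$, so that $W$ near $w_0$ is $\{\xi + \psi(\xi) : \xi \in \Bc_0,\ |\xi| \text{ small}\}$ for a $C^1$ map $\psi$ into $\Bc_1$ with $d\psi_0 = 0$; equivalently, $W \cap V = G^{-1}(0)$ where $G(w) = \pi_1 w - \psi(\pi_0 w)$ and $dG_w : \Bc \to \Bc_1 \cong \R^k$ is onto with kernel $T_w W$. (2) Consider the composition $F := G \circ f$, defined and $C^2$ on a neighborhood of $x_0$, with $F(x_0) = 0$ and $f^{-1}W$ locally equal to $F^{-1}(0)$. (3) Compute $dF_{x_0} = dG_{w_0} \circ df_{x_0}$. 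The crucial point is that this is a \emph{surjective} bounded linear map $\Bc \to \R^k$ with \emph{closed} (indeed finite-codimensional) kernel: surjectivity follows because $dG_{w_0}$ is onto $\R^k$, its kernel $T_{w_0}W$ is a closed finite-codimension-$k$ subspace, and by (D) the range of $df_{x_0}$ is dense, hence $dG_{w_0}(\text{range } df_{x_0})$ is a dense subspace of the finite-dimensional space $\R^k$, therefore all of $\R^k$. (4) Since $dF_{x_0} : \Bc \to \R^k$ is a surjective bounded operator onto a finite-dimensional space, its kernel $N := \ker dF_{x_0}$ is closed of codimension $k$ and \emph{complemented} in $\Bc$ (any finite-codimensional closed subspace is complemented). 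Write $\Bc = N \oplus N'$ with $\dim N' = k$ and apply the implicit function theorem to $F$ near $x_0$: the partial derivative of $F$ in the $N'$ direction is a linear isomorphism $N' \to \R^k$, so $F^{-1}(0)$ is locally the graph of a $C^1$ (in fact $C^2$, since $f$ is $C^2$) map $N \to N'$. This exhibits $f^{-1}W$ as an embedded $C^1$ submanifold of codimension $k$ near $x_0$. (5) Finally, note injectivity of $f$ (H1) guarantees $f^{-1}W$ is genuinely embedded rather than merely immersed — distinct points of $f^{-1}W$ map to distinct points of $W$, so the local charts glue consistently — and the tangent space is $T_{x_0}(f^{-1}W) = (df_{x_0})^{-1}(T_{w_0}W)$.

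The main obstacle is Step (3)–(4): verifying that $dF_{x_0}$ has closed, complemented kernel so that the implicit function theorem applies. Here the density hypothesis (D) is used only to get surjectivity \emph{onto a finite-dimensional target}; it does \emph{not} give that $df_{x_0}$ has closed range or is surjective, which would be false in the PDE setting. The point is that we never need the full range of $df_{x_0}$ — only its image under the finite-rank functional $dG_{w_0}$, and density is enough for that. Everything else is a routine application of the Banach-space implicit function theorem together with the standard fact that finite-codimensional closed subspaces are complemented; I would cite \cite{henry} for the parabolic-PDE context in which this is carried out in detail.
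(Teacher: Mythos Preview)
Your proof is correct and takes essentially the same approach as the paper: both represent $W$ locally as the zero set of a submersion $g : V \to \R^k$, form the composition $g \circ f$, and use the dense-range hypothesis (D) to conclude that $d(g \circ f)_{x_0}$ is surjective onto the finite-dimensional target $\R^k$. The paper differs only cosmetically---it explicitly constructs a $k$-dimensional subspace $E' \subset \Bc$ on which $d(g \circ f)$ is an isomorphism and checks by continuity that this persists in a neighborhood, whereas you invoke the implicit function theorem directly at $x_0$; your remark in Step~(5) about injectivity of $f$ is unnecessary, since the level-set/IFT description already gives an embedded submanifold.
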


We include the proof of Proposition \ref{prop:inverseImageManifold} for completeness.

%

\begin{proof}[Proof of Proposition \ref{prop:inverseImageManifold}]
Fix arbitrary $p \in f^{-1}W$, and let $g : U \to \R^k$ be a $C^1$ submersion
(i.e. $dg$ has full rank) on an open set $U \subset \Bc$ with 
$f(p) \in U$ and for which $g^{-1}(0) = W \cap U$. Observe that $f^{-1}(W \cap U) = (g \circ f)^{-1}(0)$, so it suffices to check that $d(g \circ f)_q = dg_{f q} \circ df_q : \Bc \to \R^k$ has full rank for $q$ in a neighborhood of $p$. Now there exists a $k$-dimensional complement 
$E$ to $\ker (dg_{f p})$ such that $E \subset (df_p)\Bc$, by the dense range assumption. Let $E' \subset \Bc$ be the $k$-dimensional subspace for which $df_p E' = E$. Let 
$V \subset \Bc$ be a small enough neighborhood of $p$ such that $f(V)\subset U$
and the following hold for all $q \in V$: $df_q|E'$ is injective and $d_H(E, df_q(E'))$ is sufficiently small that $dg_{fq}|df_q(E')$
is injective. This implies that $d(g \circ f)$ has full rank on $V$ as desired.
\end{proof}

From Proposition \ref{prop:inverseImageManifold} we immediately obtain the following.
\begin{cor}[Global Strongly Stable Manifold Theorem] \label{cor:wssManifoldThm}
Assume that $f$ satisfies (H1)--(H3) and (D). For any $x \in \Gamma$, $W^{ss}_x$ is an immersed $C^1$ submanifold of $\Bc$ having the same finite codimension as $W^{ss}_{x, {\rm loc}}$.
\end{cor}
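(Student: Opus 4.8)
The plan is to derive Corollary~\ref{cor:wssManifoldThm} from Proposition~\ref{prop:inverseImageManifold} together with the characterization \eqref{eq:characterizeWss} of $W^{ss}_x$ as an increasing union $\bigcup_{n\ge 0} f^{-n}\big(W^{ss}_{f^n x,\,{\rm loc}}\big)$. The local piece $W^{ss}_{f^n x,{\rm loc}} = \exp_{f^n x}\graph h_{f^n x}$ is, by Theorem~\ref{thm:stabManifold}, an embedded $C^{1+\Lip}$ submanifold of $\Bc$ of codimension $k$ (the codimension of $E^-$, which equals $m^+ = \dim E^+$ and is the same for every point of $\Gamma$). Applying Proposition~\ref{prop:inverseImageManifold} to $W = W^{ss}_{f^n x,{\rm loc}}$ and iterating $n$ times — legitimate since assumption (D) guarantees $df_q$ has dense range at every $q$ — we conclude that each $f^{-n}\big(W^{ss}_{f^n x,{\rm loc}}\big)$ is an embedded $C^1$ submanifold of codimension $k$.

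Next I would check that these pieces fit together coherently into an immersed manifold. The key observation is that $\tilde f_x(\graph h_x)\subset \graph h_{fx}$ (Theorem~\ref{thm:stabManifold}), which after unwinding the chart maps $\exp$ gives $f\big(W^{ss}_{x,{\rm loc}}\big)\subset W^{ss}_{fx,{\rm loc}}$, hence $W^{ss}_{x,{\rm loc}}\subset f^{-1}\big(W^{ss}_{fx,{\rm loc}}\big)$, and more generally $f^{-n}\big(W^{ss}_{f^n x,{\rm loc}}\big)\subset f^{-(n+1)}\big(W^{ss}_{f^{n+1}x,{\rm loc}}\big)$. Thus $W^{ss}_x$ is a nested increasing union of embedded $C^1$ submanifolds, each of codimension $k$, each open in the next (being $f^{-(n+1)}$ of an open subset of $W^{ss}_{f^{n+1}x,{\rm loc}}$, namely $W^{ss}_{f^{n+1}x,{\rm loc}}\cap f(\,\cdot\,)$ — one should verify this openness using that $df$ restricted to the relevant tangent directions is an isomorphism onto its image and that $W^{ss}_{f^n x,{\rm loc}}$ is relatively open inside $f^{-1}(W^{ss}_{f^{n+1}x,{\rm loc}})$, which follows from the graph-transform characterization in Theorem~\ref{thm:stabManifold}(d)). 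An increasing union of $C^1$ manifolds each open in the next carries a canonical $C^1$ manifold structure making every inclusion an open embedding; the inclusion of this union into $\Bc$ is then a $C^1$ immersion, injective by (H1)(i), so $W^{ss}_x$ is an injectively immersed $C^1$ submanifold of codimension $k$. Its codimension equals that of $W^{ss}_{x,{\rm loc}}$ by construction.

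The main obstacle I expect is verifying the openness of each stage inside the next, i.e.\ that $f^{-n}(W^{ss}_{f^n x,{\rm loc}})$ is an open subset of $f^{-(n+1)}(W^{ss}_{f^{n+1}x,{\rm loc}})$ in the manifold topology. One cannot simply say $f$ is a local diffeomorphism — it isn't, since $df_q$ need not be surjective — so the argument must instead use that $df_q$ maps $T_y W^{ss}_{y,{\rm loc}}$ isomorphically onto a subspace of $T_{fy}W^{ss}_{fy,{\rm loc}}$ of the same (infinite) codimension complement, combined with the fact that the local stable manifold at $f^{n+1}x$, when pulled back, is exactly exhausted near $W^{ss}_{f^n x,{\rm loc}}$ by the dynamically-defined piece. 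Concretely, a point $y\in W^{ss}_{f^{n+1}x,{\rm loc}}$ with $f^{-1}y$ defined and lying in the domain of the chart at $f^n x$ automatically satisfies the contraction estimates of Theorem~\ref{thm:stabManifold}(d) one step earlier, so $f^{-1}y\in W^{ss}_{f^n x,{\rm loc}}$; this exhibits $f^{-1}(W^{ss}_{f^{n+1}x,{\rm loc}})\cap(\text{chart domain})$ as relatively open in $W^{ss}_{f^n x,{\rm loc}}$ and conversely. Once this compatibility is in hand, everything else is a formal direct-limit construction, and the $C^1$ (indeed $C^{1+\Lip}$) regularity is inherited stage by stage from Theorem~\ref{thm:stabManifold}(a),(b) and Proposition~\ref{prop:inverseImageManifold}.
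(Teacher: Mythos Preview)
Your proposal is correct and follows exactly the line the paper intends: the paper states the corollary as an immediate consequence of Proposition~\ref{prop:inverseImageManifold} applied iteratively to the characterization \eqref{eq:characterizeWss}, giving no further details. Your careful treatment of the direct-limit structure and of the openness of each stage in the next simply makes explicit what the paper leaves implicit; if anything, the openness step can be shortened by observing that an embedded codimension-$k$ submanifold contained in another embedded codimension-$k$ submanifold is automatically open in it, so the dynamical argument via Theorem~\ref{thm:stabManifold}(d) is not strictly needed.
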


Under Assumption (D) then, we may refer to $W^{ss}_x$ as the {\it global strong stable
manifold} at $x$ associated with the rate of convergence $\la^-$.

Corollary \ref{cor:wssManifoldThm} makes possible the extension of local results on absolute continuity
of $W^{ss}$-foliations such as those in Theorem A to holonomy maps along global strong stable manifolds. 
There are many ways to formulate results of this kind, all of which boil down to 
their reduction to local holonomy maps. Here we present one version that has a strong
implication on the ``visibility" of SRB measures. 

For an ergodic measure $\mu$ of $f$,
we define the {\it basin of} $\mu$ to be the set
\begin{align*}
 \Nk(\mu)  := \{x\in \Bc : & \lim_{n \to \infty} d(f^n x, \As) = 0 \, , \text{ and} \\
 & \lim_{n \to \infty} \frac1n \sum_{i = 0}^{n-1} \phi \circ f^i x = \int \phi \, d \mu \ \ \text{ for any } \phi \in C_b^0(\Bc)\}\ ,
\end{align*}
where $C_b^0(\Bc)$ denotes the set of bounded continuous functions on $\Bc$.
The set $\Nk(\mu)$ so defined is a Borel subset of $\Bc$ by an elementary analysis
lemma, a proof of which we have included in the Appendix.

We wish to state next that the basin of an SRB measure occupies a significant subset
of the phase space.
In the absence of a reference measure on $\Bc$ that plays the role of
 Lebesgue measure on $\R^n$, we resort
to the use of finite-dimensional ``probes". For 
a finite dimensional manifold $W \subset \Bc$, the measure $\nu_W$ on $W$ 
induced from volume elements on finite dimensional subspaces
of $\Bc$ is a natural reference measure. 
Theorem D expresses the fact that the basins of ergodic SRB measures
are ``visible" with respect to these reference measures on suitably placed finite-dimensional probes.

\bigskip \noindent
{\bf Theorem D.} {\it In addition to (H1)-(H4) and (D), we assume $\mu$ is an ergodic
SRB measure. Let $W$ be a $C^2$-embedded
disk of dimension $k \ge \dim E^u$. If $W$ meets $W^s_{x_0}$ transversally at one point for some density point $x_0 \in \Gamma$ of $\mu$, 
then $\nu_W(\Nk(\mu))>0$.
}


\begin{proof} Assume first that $k=\dim E^u$.
Then by iterating forward, there is an $N \in \mathbb Z^+$ such that
a component of $f^N(W)$ in the chart at $f^N x_0$ satisfies the condition for
$\Sigma^1$ at the beginning of Section 4 with $E^+=E^u$ and $E^-=E^s$. (This involves proving the analog of what is sometimes referred to as an ``inclination lemma" in finite dimensions; the proof follows from
techniques similar to those used in Section 3 and is omitted.) 
By the fact that $f^N x_0$ is also a density point of $\mu$, it follows from Lemma \ref{lem:uStacksAC}
that it lies in a stack of unstable leaves $\Sc^u$ with $\mu(\Sc^u)>0$. 
{\bf Theorem A} together with the SRB property of $\mu$ then
implies that $\nu_{f^N W}( \Nk(\mu))>0$. Since $f^{N}|_{W}$ is a diffeomorphism with a $C^1$ inverse, we conclude that $\nu_W(\Nk(\mu))>0$ 
as well. 

If $\dim W > \dim E^u$, it is easy to decompose $W$ into a smooth family 
$W = \cup D_\alpha$ where each $D_\alpha$ is a disk having dimension 
$\dim E^u$ and transversal to $W^s_{x_0}$. The argument above applies to 
each $D_\alpha$; we then integrate the result.
\end{proof}

\begin{rmk}
There are many extensions of the notion of ``Lebesgue measure zero'' to the setting of infinite-dimensional spaces; for a survey, see Chapter 6 in \cite{benyaminiLindenstrauss}. {The property possessed by the basin $\Nk(\mu)$ of SRB measures as shown 
in {\bf Theorem D} is stronger than many of these notions. For example, it implies that 
$\Nk(\mu)$ is not of `measure zero' in the framework of prevalence/shyness \cite{SHY} 
(shyness is called Haar null in \cite{benyaminiLindenstrauss}). The proof is
similar to that showing that in $\mathbb R^n$, positive Lebesgue measure sets
are not shy (see \cite{benyaminiLindenstrauss} or \cite{SHY}); 
modifications are left to the reader.}
\end{rmk}

\subsection*{Appendix: A technical lemma }

\newcommand{\Dc}{\mathcal D}

\begin{lem} Let $A \subset Y$ be a compact subset of a metric space $Y$. 
Let $h: Y \to Y$ be a continuous map with $h^{-1}A =A$, and let 
$\nu$ be a Borel probability measure on $A$. Then the basin of $\nu$ (as
defined in Sect.6.2) is a Borel subset of $Y$.
\end{lem}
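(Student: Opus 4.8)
The plan is to write the basin as $\Nk(\nu) = B_1 \cap B_2$, where $B_1 = \{x \in Y : d(h^n x, A) \to 0\}$ and $B_2 = \{x \in Y : \frac1n\sum_{i=0}^{n-1}\phi(h^i x) \to \int\phi\,d\nu \text{ for every } \phi \in C_b^0(Y)\}$, and to show that $B_1$ is Borel directly and that $B_1 \cap B_2$ agrees with the intersection of $B_1$ with a \emph{countable} surrogate for $B_2$. The set $B_1$ costs nothing: $x \mapsto d(h^n x, A)$ is continuous (it is $\dist(\cdot, A) \circ h^n$), so $B_1 = \bigcap_k \bigcup_N \bigcap_{n \geq N}\{x : d(h^n x, A) \leq 1/k\}$ is $F_{\sigma\delta}$. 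The real difficulty is $B_2$, which a priori quantifies over an uncountable family of test functions; the plan is to replace it by a countable family that coincides with it on $B_1$.

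For the surrogate I would use that, $A$ being compact, $C(A)$ is separable in the uniform norm: fix a countable dense set $\{\psi_j\} \subset C(A)$ and extend each $\psi_j$ to a bounded uniformly continuous $\tilde\psi_j$ on $Y$ — e.g. by the inf-convolution extension $\tilde\psi_j(y) = \inf_{a \in A}\big(\psi_j(a) + \bar\omega_j(d(y,a))\big)$ with $\bar\omega_j$ a subadditive majorant of the modulus of continuity of $\psi_j$, which restricts to $\psi_j$ on $A$ and satisfies $|\tilde\psi_j(y) - \psi_j(a)| \leq \bar\omega_j(d(y,a))$ for $a \in A$. Then set $B_2' = \bigcap_j \{x : \frac1n\sum_{i<n}\tilde\psi_j(h^i x) \to \int\psi_j\,d\nu\}$; since each partial average is continuous in $x$, every set in this intersection is Borel, so $B_2'$ is Borel, and it then suffices to show $\Nk(\nu) = B_1 \cap B_2 = B_1 \cap B_2'$. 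The inclusion $B_1 \cap B_2 \subseteq B_1 \cap B_2'$ is immediate since $\tilde\psi_j \in C_b^0(Y)$ and $\int\tilde\psi_j\,d\nu = \int\psi_j\,d\nu$ ($\nu$ being carried by $A$), so the content is the reverse inclusion $B_1 \cap B_2' \subseteq B_2$.

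To prove $B_1 \cap B_2' \subseteq B_2$, fix $x \in B_1 \cap B_2'$, $\phi \in C_b^0(Y)$, and $\varepsilon > 0$. The key point is that for $x \in B_1$ the orbit closure $K_x := \overline{\{h^n x : n \geq 0\}}$ is compact: a subsequence of the orbit with indices tending to $\infty$ has $d(h^{n_k}x, A) \to 0$, so choosing nearest points $a_k \in A$ and passing to a convergent subsequence of the $a_k$ forces convergence of $h^{n_k}x$ to a point of $A$ (other cases being trivial). Hence $\phi$ is uniformly continuous and bounded on $K_x$. Choosing $j$ with $\|\phi|_A - \psi_j\|_{C(A)} < \varepsilon$, a common scale $\delta$ below the moduli of $\phi|_{K_x}$ and $\tilde\psi_j$ at level $\varepsilon$, and $N$ with $d(h^n x, A) < \delta$ for $n \geq N$, one gets $|\phi(h^n x) - \tilde\psi_j(h^n x)| < 3\varepsilon$ for $n \geq N$ by inserting a nearest $a_n \in A$ and combining the three estimates (uniform continuity of $\phi$ on $K_x$, the bound $\|\phi|_A - \psi_j\|$, and $|\tilde\psi_j(y)-\psi_j(a)|\le\bar\omega_j(d(y,a))$). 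Averaging, absorbing the finitely many initial terms using boundedness on $K_x$, invoking $x \in B_2'$, and then $|\int\psi_j\,d\nu - \int\phi\,d\nu| \leq \|\phi|_A - \psi_j\|_{C(A)} < \varepsilon$, one obtains $\limsup_n |\frac1n\sum_{i<n}\phi(h^i x) - \int\phi\,d\nu| \leq 4\varepsilon$; letting $\varepsilon \to 0$ gives $x \in B_2$. The main obstacle, as this outline shows, is exactly the reduction from the uncountable family $C_b^0(Y)$ to a countable one — both the possible failure of global uniform continuity of a general $\phi$ and the fact that orbit points merely approach, rather than lie in, the compact set $A$ must be handled, and both are neutralized by restricting to $B_1$, where orbit closures are compact and the empirical measures are asymptotically concentrated near the fixed compact set $A$.
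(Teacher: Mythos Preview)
Your proof is correct and follows essentially the same strategy as the paper's: reduce from $C_b^0(Y)$ to a countable dense family in $C(A)$, extend each function to $Y$, and use that orbit points of $x\in B_1$ eventually lie arbitrarily close to the compact set $A$ to transfer the approximation. One small slip: to bound $|\phi(h^n x)-\phi(a_n)|$ you invoke uniform continuity of $\phi$ on $K_x$, but the nearest point $a_n\in A$ need not lie in $K_x$; either work on the compact set $K_x\cup A$, or---as the paper does---use directly that a continuous $\phi$ is uniformly continuous near the compact $A$ (for every $\varepsilon$ there is $\delta$ with $|\phi(y)-\phi(a)|<\varepsilon$ whenever $a\in A$ and $d(y,a)<\delta$), which in fact makes the $K_x$-compactness step unnecessary.
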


\begin{proof} The concern is that $C^0_b(Y)$ can be large. As $C^0(A)$ has 
a countable dense subset $\Dc$, it suffices to show
that trajectory averages for $\phi \in C^0(A)$ can be approximated by those for
functions in $\Dc$.
Let $\phi \in C^0_b(Y)$ be given. Fix $\e > 0$ and let $\psi \in \Dc$ be such that
 $\|\phi|_A - \psi\|_{C^0(A)} < \e$. By the Tietze Extension Theorem, $\psi$ has 
a bounded continuous extension $\tilde \psi$ to all of $Y$. For each $i \geq 0$, let $y_i \in A$ 
be such that $d(h^i x, A) = d(h^i x, y_i)$. Then
\begin{eqnarray*}
& \ & |\sum_{i = 0}^{n-1} \phi \circ h^i x - \sum_{i = 0}^{n-1} \tilde \psi \circ h^i x|\\
& \le & \bigg| \sum_{i = 0}^{n-1} \phi \circ h^i x - \sum_{i = 0}^{n-1} \phi (y_i) \bigg|  + \bigg| \sum_{i = 0}^{n-1} \phi(y_i) - \sum_{i = 0}^{n-1} \tilde \psi(y_i) \bigg|  + 
\bigg|  \sum_{i = 0}^{n-1} \tilde \psi(y_i) - \sum_{i = 0}^{n-1} \tilde \psi \circ h^i x \bigg| \ .
\end{eqnarray*}
The middle term is $\leq \e n$. For the first term we use the fact that 
there exists $\hat \d>0$ (depending on $\e$ and $\phi$) 
such that for all $x \in A$ and $y \in Y$ with $d(x,y)<\hat \d$, we have 
$|\phi(x)-\phi(y)|<\e$. The third term is disposed of similarly.
\end{proof}

\bibliography{biblio}
\bibliographystyle{plain}

\end{document}